\documentclass{amsart}[12pt]
\usepackage{amsmath, amsthm, amscd, amsfonts,}
\usepackage{graphicx,float}

\usepackage{amssymb}
\usepackage{pb-diagram}
\usepackage{lamsarrow}
\usepackage{pb-lams}
\usepackage{comment}

%
%




\DeclareMathOperator{\crit}{crit}
\DeclareMathOperator{\dom}{dom}

\DeclareMathOperator{\Col}{Col}
\DeclareMathOperator{\Add}{Add}

\DeclareMathOperator{\Ult}{Ult}

\def\MPB{{\mathbb{P}}}
\def\MQB{{\mathbb{Q}}}
\def\MRB{{\mathbb{R}}}
\def\MCB{{\mathbb{C}}}

\setlength{\textheight}{22cm} \setlength{\textwidth}{14cm}
\setlength{\oddsidemargin}{1cm} \setlength{\evensidemargin}{1cm}

\newtheorem{theorem}{Theorem}[section]
\newtheorem{lemma}[theorem]{Lemma}

\newtheorem{definition}[theorem]{Definition}

\newtheorem{remark}[theorem]{Remark}
\newtheorem{claim}[theorem]{Claim}

\newtheorem{notation}[theorem]{Notation}
\numberwithin{equation}{section}

\newcommand{\NSP}{{\rm NSP}}

\usepackage{pb-diagram}

\def\rmark{\mbox{$\rm\bf\rule{0.06em}{1.45ex}\kern-0.05em R$}}
\def\pmark{\mbox{$\rm\bf\rule{0.06em}{1.45ex}\kern-0.05em P$}}
\def\nmark{\mbox{$\rm\bf\rule{0.06em}{1.45ex}\kern-0.05em N$}}
\def\vdash{\mbox{$\rm\| \kern-0.13em -$}}

\def\rmark{\mbox{$\rm\bf\rule{0.06em}{1.45ex}\kern-0.05em R$}}
\def\pmark{\mbox{$\rm\bf\rule{0.06em}{1.45ex}\kern-0.05em P$}}
\def\nmark{\mbox{$\rm\bf\rule{0.06em}{1.45ex}\kern-0.05em N$}}
\def\vdash{\mbox{$\rm\| \kern-0.13em -$}}
\newcommand{\lusim}[1]{\smash{\underset{\raisebox{1.2pt}[0cm][0cm]{$\sim$}}
{{#1}}}}

\begin{document}

\title[Tree property at successor of a singular limit of  measurable cardinals]{Tree property at successor of a singular limit of  measurable cardinals}

\author[M. Golshani ]{Mohammad Golshani}

  \thanks{The author's research was in part supported by a grant from IPM (No. 91030417). }

\maketitle

\begin{abstract}
Assume $\lambda$ is a singular limit of $\eta$ supercompact cardinals, where $\eta \leq \lambda$ is a limit ordinal. We present two forcing methods for making
$\lambda^+$  the successor of the limit of the first $\eta$ measurable cardinals while the tree property holding at $\lambda^+.$ The first method is then used to get,
from the same assumptions,
tree property at $\aleph_{\eta^2+1}$ with the failure of $SCH$ at $\aleph_{\eta^2}$. This extends  results of Neeman and Sinapova. The second method is also used to get tree property at successor of an arbitrary singular cardinal, which extends some results of Magidor-Shelah, Neeman and Sinapova.
\end{abstract}
\section{introduction}
An important theorem of Magidor \cite{magidor} says that it is consistent, relative to the existence of a strongly compact cardinal, that the least strongly compact cardinal is also the least measurable cardinal. This result is  generalized by Kimchi-Magidor (see \cite{grigor}) who showed that for any natural number $n,$ it is consistent, relative to the existence of $n$ supercompact cardinals, that there are at least $n$ strongly compact cardinals, and  the first $n$ strongly compact cardinals are exactly the first $n$ measurable cardinals.
A major open problem  is  if it is consistent for the first $\omega$ strongly compact cardinals to coincide with the first $\omega$ measurable cardinals.
In \cite{grigor}, Sargsyan, describes the problems as follows: ``It is a difficult problem, one whose
ultimate solution might just lie elsewhere then the places that were suspected in the
past. Understanding the combinatorics of $\lambda^+$ where $\lambda$ is a limit of strongly compact
cardinals might eventually lead to its negative resolution''.

On the other hand a result of Magidor-Shelah \cite{magidor-shelah} says that if $\lambda$ is a singular limit of strongly compact cardinals, then $\lambda^+$ has the tree property.
However this result is not necessarily true for measurable cardinals. This can be seen either using results from core model theory or by a simple forcing argument.
Assume $V= \mathcal{K},$ where $\mathcal{K}$ is the core model for a strong cardinal, and suppose that $\lambda$ is a singular limit of measurable cardinals.
Then $\Box^*_\lambda$ holds, in particular there are special $\lambda^+$-Aronszajn trees in $\mathcal{K},$ hence tree property fails at
$\lambda^+.$ On the other hand, if $\lambda$ is as above and if we force  $\Box^*_\lambda$ using its initial approximations, then in the resulting generic extension,
$\lambda$ remains the singular limit of measurable cardinals, but tree property fails at
$\lambda^+.$

Motivated by these results, we prove  the following.
\begin{theorem}
Assume $\lambda$ is a singular limit of $\eta$ supercompact cardinals, where $\eta \leq \lambda$ is a limit ordinal. Then there is a generic extension in which:

$(a)$ $\lambda^+$ is preserved and it is  the successor of the limit of the first $\eta$ measurable cardinals,

$(b)$ Tree property holds at $\lambda^+.$
\end{theorem}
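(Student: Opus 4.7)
Let $\langle \kappa_\xi : \xi < \eta \rangle$ enumerate the first $\eta$ supercompact cardinals in increasing order, so that $\sup_{\xi<\eta}\kappa_\xi = \lambda$. I plan to force with a two-stage iteration $\mathbb{P} = \mathbb{P}^{\mathrm{L}} * \dot{\mathbb{P}}^{\mathrm{C}}$, where $\mathbb{P}^{\mathrm{L}}$ is a joint Laver-style preparation rendering each $\kappa_\xi$ indestructibly supercompact under further $\kappa_\xi$-directed closed forcing, and $\dot{\mathbb{P}}^{\mathrm{C}}$ is an Easton-supported iteration of L\'evy collapses designed to destroy every measurable in $\lambda \setminus \{\kappa_\xi : \xi < \eta\}$ while preserving each $\kappa_\xi$ and $\lambda^+$.

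Concretely, $\mathbb{P}^{\mathrm{C}}$ begins with $\Col(\omega, <\kappa_0)$ (to kill measurables below $\kappa_0$ and to arrange $\kappa_0 = \aleph_1$), and at each successor stage $\xi+1 < \eta$ forces with $\Col(\kappa_\xi^+, <\kappa_{\xi+1})$, collapsing the whole interval $(\kappa_\xi, \kappa_{\xi+1})$ onto $\kappa_\xi^+$ and thereby ruling out any intermediate measurables. Easton supports at limit stages are arranged to make $\mathbb{P}$ suitably chain-condition-bounded so that $\lambda^+$ is preserved.

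For part (a), the standard factoring $\mathbb{P} \simeq \mathbb{P}_{\leq\kappa_\xi} * \dot{\mathbb{Q}}_\xi$, with $|\mathbb{P}_{\leq\kappa_\xi}| < \kappa_{\xi+1}$ and $\dot{\mathbb{Q}}_\xi$ forced to be $\kappa_\xi^+$-directed closed in $V[G_{\leq\kappa_\xi}]$, combines with the indestructibility supplied by $\mathbb{P}^{\mathrm{L}}$ to show that every $\kappa_\xi$ remains supercompact in $V[G]$. Since the collapses destroy every other candidate measurable below $\lambda$, the set $\{\kappa_\xi : \xi < \eta\}$ is exactly the collection of the first $\eta$ measurables of $V[G]$, its supremum equals $\lambda$, and $\lambda^+$ remains its successor. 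For part (b), because each $\kappa_\xi$ is still supercompact in $V[G]$ and therefore strongly compact, $\lambda$ is a singular limit of strongly compact cardinals in $V[G]$, and the Magidor--Shelah theorem \cite{magidor-shelah} then yields the tree property at $\lambda^+$.

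\textbf{Main obstacle.} The technical heart of the argument is the construction of $\mathbb{P}^{\mathrm{L}}$ and the verification that it confers the exact form of indestructibility needed against the actual tail $\dot{\mathbb{Q}}_\xi$ of $\mathbb{P}$ (a tail which contains not only collapses but also preparation steps at larger $\kappa_\zeta$). This is delicate when $\eta$ has uncountable cofinality, since one must arrange the supports of both $\mathbb{P}^{\mathrm{L}}$ and $\mathbb{P}^{\mathrm{C}}$ so that the tail past $\kappa_\xi$ is $\kappa_\xi^+$-directed closed in $V[G_{\leq\kappa_\xi}]$ uniformly in $\xi$, and so that the whole iteration is $\lambda^+$-cc. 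Once this bookkeeping is carried out, the tree property at $\lambda^+$ falls out immediately from Magidor--Shelah.
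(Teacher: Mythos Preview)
Your plan has a fatal gap: the collapses you describe make each $\kappa_\xi$ a \emph{successor} cardinal in $V[G]$, so none of them can be measurable, let alone supercompact. Concretely, $\Col(\omega,<\kappa_0)$ turns $\kappa_0$ into $\aleph_1$, and $\Col(\kappa_\xi^+,<\kappa_{\xi+1})$ turns $\kappa_{\xi+1}$ into $\kappa_\xi^{++}$; successor cardinals are never strongly inaccessible. Laver indestructibility does not help here: it only protects $\kappa_\xi$ against $\kappa_\xi$-directed closed forcing, and your lower factor $\mathbb{P}_{\leq\kappa_\xi}$ is not of that form (indeed, no amount of preparation can make a successor cardinal measurable). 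Thus part~(a) fails outright for your sequence, and the appeal to Magidor--Shelah for part~(b) collapses because $\lambda$ is no longer a limit of strongly compact cardinals in $V[G]$.

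There is a deeper obstruction: if your scheme worked, the $\kappa_\xi$'s would be simultaneously the first $\eta$ measurable cardinals \emph{and} strongly compact. For $\eta=\omega$ this is exactly the well-known open problem discussed in the paper's introduction, so any argument that purports to deliver it in passing should be viewed with suspicion. The paper avoids this by \emph{not} trying to keep the original supercompacts measurable. In the Section~5 proof, for instance, the collapses are $\Col((\kappa_i)_*^{++},<\kappa_{i+1})$, so the surviving measurables below $\lambda$ are the $(\kappa_i)_*$'s (the least measurables above each $\kappa_i$), not the $\kappa_i$'s themselves; these survivors are merely measurable, and the tree property at $\lambda^+$ must be established directly via narrow systems and a Neeman-style reflection argument, not by quoting Magidor--Shelah. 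The Section~3 proof likewise uses a diagonal Prikry-type forcing to create a new cofinal sequence of measurables below $\kappa$ and proves the tree property by hand. In both cases the real work is precisely the tree-property argument that your plan outsources.
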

Our result shows that the tree property is not a suitable candidate for the study of the above cited problem, as suggested by Sargsyan.
It is worth mentioning that it is  easy to modify  Neeman's proof of tree property at $\aleph_{\omega+1}$ in  \cite{neeman} to get the
tree property at the successor of the supremum of the first $\omega$ measurable cardinals. This is enough to show that the tree property is not  sufficiently
strong to separate the first limit of measurable cardinals from the first limit of strongly compact
cardinals. The main advantage of the above theorem is that it works for all cofinalities
and also the proofs of the theorem allow us to get more results about tree property.

In this paper we will present two different proofs of the above theorem, the fist one uses diagonal Magidor forcing with interleaved collapses
and the second one uses Levy collapses.

The diagonal Magidor forcing with interleaved collapses was introduced by Sinapova \cite{sinapova} (see also \cite{sinapova2}), where she used the forcing to get
the failure of $SCH$ at $\aleph_{\omega_1^2}$ together with the existence of a very good scale and a bad scale at $\aleph_{\omega_1^2+1}$.
Our fist proof of Theorem 1.1 uses the above forcing notion and is much more complicated than the second proof which uses Levy collapses.
The reason for giving such a proof is that, unlike the second proof, the method is more flexible and allows us to add the failure of $SCH$
into our conclusion. To be more precise, we uss the method
 to prove the following theorem, which extends  results of Neeman \cite{neeman0}
and Sinapova \cite{sinapova2}, \cite{sinapova3}.

\begin{theorem}
Assume $\lambda$ is a singular limit of $\eta$ supercompact cardinals, where $\eta \leq \lambda$ is a limit ordinal. Then there is a generic extension in which:

$(a)$ $\lambda^+=\aleph_{\eta^2+1}$,

$(b)$ $\aleph_{\eta^2}$ is a singular strong limit cardinals and $2^{\aleph_{\eta^2}} > \aleph_{\eta^2+1},$

$(c)$ Tree property holds at $\aleph_{\eta^2+1}.$

\end{theorem}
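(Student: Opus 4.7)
The plan is to combine the diagonal Magidor forcing with interleaved collapses from the first proof of Theorem~1.1 with a preparatory reverse Easton iteration that blows up the power sets of the supercompacts, following the general template of Sinapova's construction of a failure of $\GCH$ at $\aleph_{\omega_1^2}$ in \cite{sinapova2, sinapova3}. I fix an increasing sequence $\ordered{\kappa_i \mid i < \eta}$ of supercompacts cofinal in $\lambda$, and first perform a Laver-style preparation which, at each inaccessible stage $\kappa_i$, forces with $\Add(\kappa_i, \kappa_i^{+\eta^2+2})$ (or some similarly large target) and makes the supercompactness of $\kappa_i$ indestructible under the kind of directed-closed forcing that will appear later in the intervals above $\kappa_i$. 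The point of the fat preparation is to guarantee that in the final model $2^{\aleph_{\eta^2}} > \aleph_{\eta^2+1}$, delivering clause $(b)$.

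On top of this preparation I define the main forcing $\MPB$ as a diagonal Magidor forcing singularizing $\lambda$ to cofinality $\eta$ along the Magidor club $\ordered{\kappa_i \mid i < \eta}$, with Levy collapses of the form $\Col(\kappa_i^{+\eta+1}, \upto \kappa_{i+1})$ inserted in each gap. The effect is that in $V[G]$ each $\kappa_i$ is realized as $\aleph_{\eta\cdot(i+1)}$, so that $\lambda = \sup_{i<\eta}\kappa_i$ becomes $\aleph_{\eta^2}$ and $\lambda^+ = \aleph_{\eta^2+1}$; this will give clause~$(a)$. The standard Prikry property plus chain-condition analysis for this forcing (in the style of Sinapova) shows that $\lambda^+$ is preserved and that the first $\eta$ measurables in the extension are exactly the images of the $\kappa_i$'s.

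The heart of the argument is clause $(c)$, the tree property at $\lambda^+$. I would follow the same outline as the first proof of Theorem~1.1: given a $\lambda^+$-tree $T$ in $V[G]$, produce a generic supercompact embedding $j\func V[G]\to M[H]$ derived from a measure on some $P_\lambda(\mu)$ at a large $\kappa_i$ and lifted through $\MPB$; a node on level $\lambda$ of $j(T)$ together with $j\restricted T$ would recover a cofinal branch $b$ through $T$. One then splits $\MPB$ as a product of a small ``bottom'' part and a highly closed ``tail'' part, runs a chain-condition argument on the tail and a branch lemma on the bottom to show $b\in V$, and contradicts the fact that $T$ is Aronszajn in $V$.

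The main obstacle will be making the branch lemmas survive the fat preparation. Standard closed-forcing branch lemmas want the small forcing below $\kappa_i$ to have size $\kappa_i$, but here it has size $\kappa_i^{+\eta^2+2}$, which is larger than the width of $T$ at level $\kappa_i$, so the naive Silver argument fails. The remedy, following Sinapova's treatment in \cite{sinapova2}, will be to factor $\MPB$ below a master condition into a genuinely $\kappa_i^+$-cc quotient and a $\kappa_i^+$-closed remainder, use the Laver-style indestructibility obtained in the preparation to lift the supercompactness embedding through this factorization, and then run the branch-preservation argument on the closed factor and an Easton-style product argument on the cc factor. Arranging the bookkeeping so that the $\aleph$-indexing, the failure of $\GCH$ at $\aleph_{\eta^2}$, the indestructibility, and the branch-killing chain conditions all coexist is where the bulk of the technical work will live.
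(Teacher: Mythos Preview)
Your proposal misidentifies the target of the diagonal Magidor forcing and hence the mechanism for both the cardinal arithmetic and the SCH failure. You propose to ``singularize $\lambda$ along the Magidor club $\langle \kappa_i \mid i < \eta\rangle$'', but $\lambda$ is already singular with exactly that cofinal sequence; there is nothing Prikry-like to do there, and inserting collapses between the $\kappa_i$ is just a product of Levy collapses (this is what Section~5 does for Theorem~1.1, and as the paper explicitly remarks, that method cannot yield SCH failure). In the paper the diagonal supercompact Prikry forcing is applied to $\kappa = \kappa_0$, the \emph{first} supercompact. One first collapses between the $\kappa_i$ so that $\lambda = \kappa_\eta = \kappa^{+\eta}$, then forces $\Add(\kappa, \kappa_\eta^{++})$, and only then runs the diagonal Prikry construction using measures on $P_\kappa(\kappa^{+\xi})$ for $\xi < \eta$. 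This adds generic Prikry points $\tau_\xi < \kappa$, collapses the intervals $(\tau_\xi^{+\eta+1}, \tau_{\xi+1})$, and simultaneously collapses $\lambda$ down to $\kappa$, so that $\kappa = \aleph_{\eta^2}$ and $\lambda^+ = \kappa^+ = \aleph_{\eta^2+1}$. The SCH failure comes from the single $\Add(\kappa,\kappa_\eta^{++})$ step, not from blowing up powersets at every $\kappa_i$.

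Your outline for clause~(c) is also off. The paper does not lift one supercompact embedding through $\MPB$ via a master condition and invoke a branch lemma. Instead it reruns the narrow-system argument of Section~3 (Lemmas 3.15--3.20), using the supercompactness of the \emph{higher} $\kappa_\gamma$'s, now modified to tolerate the extra Cohen factor. The genuine new obstacle is not that the bottom piece is ``too big'' for a Silver-style argument, but that after $\Add(\kappa,\kappa_\eta^{++})$ the measures $U_\gamma$ and $j^*(U_\gamma)$ no longer agree; this is overcome via Neeman's $W_y$-analysis from \cite{neeman0} (see Lemma~4.4) together with the stronger preservation Lemma~2.4, not by factorization or indestructibility tricks. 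Both the forcing construction and the tree-property argument therefore need to be rebuilt along these lines.
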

The second method of the proof uses the product of Levy collapses over a suitable prepared model. This method was first used by Neeman  \cite{neeman}
 to get tree property at $\aleph_{\omega+1}.$ We extend Neeman's proof to cover uncountable cofinalities and use it to present a different proof
 of Theorem 1.1.
 The method of the proof is also used to get the following theorem, which extends the results of
Magidor-Shelah \cite{magidor-shelah}, Neeman \cite{neeman} and Sinapova \cite{sinapova1}.
\begin{theorem}
Assume $\lambda$ is a singular limit of $\eta$ supercompact cardinals, where $\eta \leq \lambda$ is a limit ordinal. Then there is a generic extension in which
 $\lambda^+=\aleph_{\eta+1}$ and tree property holds at $\aleph_{\eta+1}.$
\end{theorem}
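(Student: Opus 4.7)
The plan is to generalize Neeman's proof of the tree property at $\aleph_{\omega+1}$ from \cite{neeman} to arbitrary limit cofinalities $\eta \le \lambda$. Write $\lambda = \sup_{\alpha < \eta} \kappa_\alpha$ with $\ordered{\kappa_\alpha \st \alpha < \eta}$ the given continuous increasing sequence of supercompact cardinals. First I would perform a Laver-style preparation making each $\kappa_\alpha$ indestructible under $\kappa_\alpha$-directed-closed forcing of size below the next supercompact; this is a standard Easton iteration whose quotients remain sufficiently closed or small, so no supercompact in the sequence is destroyed and $\lambda$ retains cofinality $\eta$.

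The main forcing $\MPB$ is the Easton-supported product of Levy collapses, with factor $\Col(\omega, \upto \kappa_0)$ at the bottom and $\Col(\kappa_\beta^+, \upto \kappa_{\beta+1})$ at each successor stage $\alpha = \beta+1$, with no factor at limit $\alpha$ so that $\kappa_\alpha$ itself becomes the successor of $\sup_{\beta<\alpha}\kappa_\beta$. Standard $\kappa_{\alpha+1}$-cc and closure computations show that in $V[G]$ each $\kappa_\alpha$ occupies the correct position in the $\aleph$-sequence, giving $\lambda = \aleph_\eta$ with $\lambda^+ = \aleph_{\eta+1}$ preserved.

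For the tree property, suppose toward contradiction that $T$ is a $\lambda^+$-Aronszajn tree in $V[G]$ with $\MPB$-name $\dot T$. Factor $\MPB = \MPB_{\upto\kappa_\alpha} \times \MPB^{\kappa_\alpha}$ at a sufficiently large $\kappa_\alpha$ above the parameters defining $\dot T$, and write $G = G_\alpha \times G^\alpha$. By indestructibility, $\kappa_\alpha$ remains $\lambda^+$-supercompact in $V[G_\alpha]$, so pick $j \func V[G_\alpha] \to M$ with $\crit(j) = \kappa_\alpha$ and $M$ closed under $\lambda^+$-sequences. I would lift $j$ through $\MPB^{\kappa_\alpha}$ by constructing a master condition in $j(\MPB^{\kappa_\alpha})$, available because the image forcing above $\kappa_\alpha$ is sufficiently directed-closed in $M$. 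In a generic extension $V[G][H]$ of $V[G]$, a node of $j(T)$ at level $\sup j'' \lambda^+$ yields a cofinal branch $b$ of $T$.

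The main obstacle, and the point where the extension of Neeman's argument to uncountable $\eta$ becomes delicate, is arguing $b \in V[G]$. I would factor the quotient $j(\MPB)/G$ so as to apply a branch-preservation lemma: neither a sufficiently closed factor nor a sufficiently chain-condition factor can introduce a new $\lambda^+$-branch through a $\lambda^+$-tree already present in the intermediate model. When $\cf(\eta)$ is uncountable, the limit stages of the product require careful bookkeeping, since the interaction between the closure of the tail collapses and the support of the product at singular limits below $\kappa_\alpha$ must be balanced so that Silver-style branch lemmas still apply at both countable and uncountable cofinalities. Once $b \in V[G]$ is established, $T$ acquires a cofinal branch, contradicting Aronszajness and completing the proof.
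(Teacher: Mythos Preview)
Your proposal has a genuine gap at the branch-preservation step, and the overall architecture differs from both Neeman's argument and the paper's.

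First, the phrase ``a sufficiently large $\kappa_\alpha$ above the parameters defining $\dot T$'' has no content: $\dot T$ is a name for a $\lambda^+$-tree, hence has size $\lambda^+$, and no initial segment $\MPB_{<\kappa_\alpha}$ of your product can absorb it. So the tree genuinely lives in $V[G]$, not in any $V[G_\alpha]$.

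Second, and decisively, after lifting $j$ with $\crit(j)=\kappa_\alpha$ you obtain the branch $b$ in $V[G][H^*]$ where $H^*$ is generic for $j(\MPB^{\kappa_\alpha})$ below the master condition. In $V[G_\alpha]$ this poset is indeed $\lambda^{++}$-closed (via closure of $M$ under $\lambda^+$-sequences), but that is the wrong base model: $T\in V[G]$, not $V[G_\alpha]$. In $V[G]=V[G_\alpha][G^\alpha]$ the quotient inherits only $\kappa_\alpha^+$-closure, because $\MPB^{\kappa_\alpha}$ is merely $\kappa_\alpha^+$-distributive. A $\kappa_\alpha^+$-closed forcing can certainly add a branch to a $\lambda^+$-tree when $\kappa_\alpha<\lambda$, and your quotient has no useful chain condition either (its factors are large Levy collapses). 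So no Silver-style branch lemma applies, and the proof breaks here. This is exactly the obstruction that makes tree property at successors of singulars hard: a single supercompact below $\lambda$ does not suffice.

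The paper's route is quite different. It does \emph{not} fix the bottom collapse: over $V[H]$ one considers, for each $\mu<\kappa_0$, the forcing $\Col(\omega,\mu)\times\Col(\mu^+,\kappa_0)$ and assumes toward contradiction that every such extension carries a $\kappa_\eta^+$-Aronszajn tree $\lusim T_\mu$. A $\kappa_\eta^+$-supercompact embedding with critical point $\kappa_0$ is applied to the whole sequence $\langle\lusim T_\mu:\mu<\kappa_0\rangle$; reflecting at $\mu=\kappa_\eta$ inside $M$ produces a narrow system on $\kappa_\eta^+$ in $V[H]$. The branch is then found not by closure/cc preservation but by the Narrow System Property at $\kappa_\eta^+$, which holds in $V[H]$ precisely because \emph{all} the $\kappa_i$ remain (indestructibly) supercompact there. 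Thus the other supercompacts are used essentially, via $\NSP$, rather than being bypassed by a single lifting as you attempt.
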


The structure of the paper is as follows. Section 2 is devoted to some  preservation lemmas that will be used later.
In section 3, we give a proof of  theorem 1.1 using diagonal Magidor forcing with
 interleaved collapses, developed by
  Sinapova \cite{sinapova}, \cite{sinapova2}.
  In section 4, we show  how to modify the above argument to get a proof of Theorem 1.2.
In section 5, we present another proof of Theorem 1.1 which uses ideas developed by Nemman \cite{neeman} and finally in section 6, we sketch the proof
 of Theorem 1.3.

\section{Some preservation lemmas}
In this section we present some definitions and results that will be used in later sections of this paper.
Our main tool for the study of tree property is the notion of (narrow) systems introduced by Magidor and Shelah \cite{magidor-shelah}.
\begin{definition}
Let $\kappa$ be a regular cardinal. A narrow system at $\kappa$ is a tuple $\mathcal{S} = \langle I, \mathcal{R}\rangle$ where for some $\rho$, we have:
\begin{enumerate}
\item (Narrowness) $\rho^+ < \kappa$, $|\mathcal{R}|^+ < \kappa$ and  $I$ is unbounded in $\kappa$.
\item For every $R\in\mathcal{R}$, $R$ is a tree like partial order on $I\times\rho$, i.e.\ it is a transitive, reflective and anti-symmetric relation and for every $x, y, z$ if $y R x$ and $z R x$ then either $y R z$ or $z R y$.
\item For every $R\in\mathcal{R}$, $\langle \alpha, \zeta\rangle R \langle \beta, \xi\rangle$ implies  $\alpha \leq \beta$ and if $\alpha = \beta$ then $\zeta = \xi$.
\item (Connectedness) for every $\alpha < \beta$ in $I$ there are $R\in\mathcal{R}$, $\zeta, \xi < \rho$ such that $\langle \alpha, \zeta\rangle R \langle \beta, \xi\rangle$.
\end{enumerate}
Sets of the form $\{\alpha\}\times\rho$ are called the levels of $\mathcal{S}$.

A branch through a narrow system $\mathcal{S}$ as above is a subset $b\subseteq I\times \rho$ such that there is $R\in\mathcal{R}$ for which $\langle b, R\rangle$ is a total order. $b$ is cofinal if it meets cofinally many levels in $\mathcal{S}$.

A system of branches is a collection of branches $\{b_j \mid j < |\mathcal{R}\times \rho|\}$ such that each $b_j$ is a branch and for every $\alpha\in I$ there is $j$ such that $b_j\cap (\{\alpha\}\times\rho) \neq \emptyset$.
\end{definition}

\begin{definition}
Let $\kappa$ be a regular cardinal. The narrow system property at $\kappa$, $\NSP(\kappa)$ is the assertion that every narrow system at $\kappa$ has a cofinal branch.
\end{definition}
Systems are tuples that satisfy all requirements except for the first from definition of narrow systems. Systems, which were first defined by Magidor and Shelah \cite{magidor-shelah}, appear naturally when analyzing names of trees under forcing notions.  The advantage of this definition is that even when restricting the system to a cofinal subset of $I$, it remains a system. In a similar way, a name of a system is a system. For more details about systems and narrow systems see \cite{LambieHanson2015}.

The next lemma is proved by Sinapova  \cite{sinapova1} in the case $\nu$ has cofinality $\omega;$ but her proof can be modified for all singular cardinals.
\begin{lemma} (Preservation lemma 1)
Let $\nu$ be a singular cardinal  and let $\kappa \leq \eta < \nu$ be  regular cardinals. Let $\mathbb{Q}$ be a $\eta^+$-c.c. forcing notion and let $\mathbb{R}$ be a $\eta^+$-closed forcing notion. Assume that in $V^{\mathbb{Q}}$ there is a narrow system $\mathcal{S}$ of height $\nu^+$, levels of width $\eta$ and index set of cardinality $\kappa$. Assume that in $V^{\mathbb{Q}\times \mathbb{R}}$ there is a system of branches for $\mathcal{S}$.

Then, in $V^{\mathbb{Q}}$ there is a cofinal branch for $\mathcal{S}$.
\end{lemma}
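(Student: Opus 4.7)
The plan is to reduce the statement to a standard branch-preservation argument for $\eta^+$-closed forcing over narrow trees, executed via a splitting-tree construction in $\mathbb{R}$. First, I observe that by narrowness $|\mathcal{R}\times\rho|\le\eta$ while $I$ is unbounded in $\nu^+=\cf(\nu^+)>\eta$, so pigeonhole forces some branch $b_j$ in the given system of branches to meet cofinally many levels of $\mathcal{S}$. Fix this cofinal $b_j$, its associated relation $R\in\mathcal{R}$, and an $\mathbb{R}$-name $\dot{b}$ for it over $V^{\mathbb{Q}}$; the $R$-component of $\mathcal{S}$ is a tree in $V^{\mathbb{Q}}$ of height $\nu^+$ with levels of width $\le\eta$, and the problem reduces to producing a cofinal branch for this tree inside $V^{\mathbb{Q}}$.

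Working in $V^{\mathbb{Q}}$, set $b^p:=\setof{x}{p\Vdash_{\mathbb{R}}x\in\dot{b}}$ for each $p\in\mathbb{R}$; tree-likeness of $R$ together with property (3) makes each $b^p$ a linearly $R$-ordered subset of $\mathcal{S}$. If some $b^p$ is cofinal in $\nu^+$ we are done, so assume all are bounded. Using that $\mathbb{R}$ is $\eta^+$-closed (which survives to $V^{\mathbb{Q}}$ for $V$-sequences, and can be propagated via Easton's lemma and the $\eta^+$-c.c.\ of $\mathbb{Q}$), I build a binary splitting family $\ordered{p_s\mid s\in 2^{\upto\eta^+}}$ in $\mathbb{R}$ together with levels $\alpha_\xi\in I$ depending only on $\xi=|s|$, each chosen above $\sup b^{p_s}$ for relevant $s$. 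At limit stages take lower bounds by closure; at successors, the fact that $p_s$ forces $\dot b$ cofinal but cannot decide any element at level $\alpha_{|s|}$ yields incompatible $p_{s\append 0},p_{s\append 1}\le p_s$ deciding $R$-incomparable $x_0\ne x_1\in\set{\alpha_{|s|}}\times\rho$ to lie in $\dot{b}$.

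For each $f\in 2^{\eta^+}$ let $p_f\in\mathbb{R}$ be a lower bound of $\ordered{p_{f\restricted\xi}\mid\xi<\eta^+}$, and further extend to $p'_f\le p_f$ deciding some $x_f\in\set{\beta_f}\times\rho$ to lie in $\dot{b}$ at a level $\beta_f\in I$ above $\alpha^*:=\sup_{\xi<\eta^+}\alpha_\xi$; by tree-likeness of $R$, $x_f$ has a unique $R$-predecessor $y_f$ at any fixed level $\beta^*\in I$ above $\alpha^*$ (after possibly thinning to arrange $\beta_f\ge\beta^*$). Tree-likeness further ensures $y_f\ne y_g$ whenever $f\ne g$: a common $y$ would place both $b^{p'_f}$ and $b^{p'_g}$ $R$-below $y$ and hence force them to agree at every splitting level, contradicting the choice of $x_0\ne x_1$ at the first level where $f$ and $g$ differ. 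This produces $2^{\eta^+}$ distinct elements inside $\set{\beta^*}\times\rho$, a set of size $\le\eta$, which is absurd.

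The main technical obstacle is the final projection/thinning step: one must either arrange via a density argument that $\beta_f=\beta^*$ for $2^{\eta^+}$ many $f$'s, or use tree-likeness to project each $x_f$ to a uniform level $\beta^*\in I$. This is the delicate combinatorial point in Sinapova's $\cf(\nu)=\omega$ proof in \cite{sinapova1}; extending it to general singular $\nu$ only requires replacing $\omega$-sequences by $\eta^+$-sequences at the limit stages of the splitting construction and rechecking closure, which is supplied by $\eta^+$-closure of $\mathbb{R}$ together with $\eta<\nu$.
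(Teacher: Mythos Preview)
The paper does not prove this lemma: it simply cites Sinapova's argument for the case $\cf(\nu)=\omega$ and asserts that the proof generalizes to arbitrary singular $\nu$. Your sketch is precisely that generalization---pigeonhole to a single cofinal branch, then a splitting-tree argument in $\mathbb{R}$---so the approach matches what the paper intends.

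One correction is needed. Under the standard convention, ``$\eta^+$-closed'' provides lower bounds only for descending sequences of length $<\eta^+$; hence your step ``let $p_f\in\mathbb{R}$ be a lower bound of $\langle p_{f\restriction\xi}\mid\xi<\eta^+\rangle$'' is not justified. Build the splitting tree to height $\eta$ (so $s\in 2^{<\eta}$) rather than $\eta^+$: closure then applies at every limit stage and at the end, and you obtain $2^\eta>\eta$ branches, which is exactly what is needed to overload a level of width $\eta$. A second point to sharpen: $\mathbb{R}$ need not remain $\eta^+$-closed in $V^{\mathbb{Q}}$; Easton's lemma gives only $\eta^+$-distributivity there. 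The tree of conditions $\langle p_s\rangle$ must therefore be built in $V$ (where genuine closure is available), working with $\mathbb{Q}$-names for the system and using the $\eta^+$-c.c.\ of $\mathbb{Q}$ to control the splitting levels. Your parenthetical hints at this, but the argument should be stated that way from the outset rather than as a construction inside $V^{\mathbb{Q}}$.
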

For the proof of Theorem 1.2 we need the following lemma from \cite{sinapova3}. Again the lemma is stated and proved for the countable cofinality case, but its proof
can be modified to get the following stronger result.
\begin{lemma} (Preservation lemma 2)
Suppose that $\nu$ is a singular cardinal,
$\kappa, \tau < \nu$ are regular cardinals, and in $V , \MQB$ is $\kappa^+$-c.c notion of forcing
and $\mathbb{R}$ is a $\max(\kappa, \tau)^+$-closed notion of forcing. Let $E$ be $\MQB$-generic
over $V$ and let $F$ be $\mathbb{R}$-generic over $V[E]$. Suppose that $\mathcal{S}= \langle I, \mathcal{R}\rangle$
is a narrow system in $V[E]$ of height $\nu^+$, levels of size $\kappa$, and with
$\mathcal{R}= \langle  R_\sigma: \sigma < \tau   \rangle$. Suppose that in $V[E][F]$ there are (not necessarily
all unbounded) branches $\langle  b_{\sigma, \delta}: \sigma \in L, \delta < \kappa    \rangle$, such that:
\begin{enumerate}
\item  every $b_{\sigma, \delta}$ is a branch through $R_\sigma$, and for some $(\sigma, \delta) \in L \times \kappa,$
$b_{\sigma, \delta}$  is unbounded.

\item  for all $\alpha \in I$, there is  $(\sigma, \delta) \in L \times \kappa$ such that $S_\alpha \cap b_{\sigma, \delta} \neq \emptyset.$
\end{enumerate}
Then $\mathcal{S}$ has an unbounded branch in $V[E]$.
\end{lemma}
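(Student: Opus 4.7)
The plan is to adapt the proof of Lemma~2.3 to the present setting, where the number of tree orders $\tau$ must also be accommodated, which is why the closure hypothesis on $\mathbb{R}$ is strengthened to $\max(\kappa,\tau)^+$. I would argue by contradiction: suppose $\mathcal{S}$ has no unbounded branch in $V[E]$, and work toward a clash with the $\kappa^+$-c.c.\ of $\MQB$ together with the level width $\kappa$.

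By hypothesis (1), some $b_{\sigma^*,\delta^*}$ is unbounded in $V[E][F]$; let $\dot b^*$ be an $\mathbb{R}$-name in $V[E]$ for it, and pick $r_0 \in F$ forcing that $\dot b^*$ is an unbounded branch through $R_{\sigma^*}$. In $V[E]$ define the canonical chain $c_{r_0} = \{x \in I \times \kappa : r_0 \Vdash x \in \dot b^*\}$. Since $R_{\sigma^*}$ lies in $V[E]$, $c_{r_0}$ is a genuine $R_{\sigma^*}$-chain, and our contradiction hypothesis forces $c_{r_0}$ to be bounded, say below some $\xi_0 < \nu^+$. Using the $\max(\kappa,\tau)^+$-closure of $\mathbb{R}$ in $V$ (combined with Easton's lemma to transfer enough distributivity to $V[E]$), I would build a splitting tree of $\mathbb{R}$-conditions $\{r_\eta : \eta \in 2^{<\mu^+}\}$ below $r_0$, where $\mu = \max(\kappa,\tau)$, together with witnessing levels $\alpha_\eta > \xi_0$ such that $r_{\eta \append 0}$ and $r_{\eta \append 1}$ force distinct values of $\dot b^*$ at level $\alpha_\eta$. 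The splitting step is feasible precisely because $c_r = \{x : r \Vdash x \in \dot b^*\}$ is bounded for every $r \leq r_0$; otherwise we would already have a cofinal branch in $V[E]$. At limit stages of length $\leq \mu$, the closure of $\mathbb{R}$ in $V$ produces lower bounds.

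Evaluating the canonical chains $c_{r_\eta}$ for $\eta \in 2^\mu$ then yields $2^\mu \geq \kappa^+$ pairwise distinct bounded $R_{\sigma^*}$-chains in $V[E]$, separated at the splitting levels. A Silver-style reflection using the $\kappa^+$-c.c.\ of $\MQB$ collapses these onto a common level of $\mathcal{S}$, producing $\kappa^+$ distinct elements in a single level of width $\kappa$, contradicting narrowness. Hypothesis (2), that the family $\{b_{\sigma,\delta} : \sigma \in L,\ \delta < \kappa\}$ covers every level of $\mathcal{S}$, is used here to ensure that the chains produced by the splitting really do meet the relevant levels of the width-$\kappa$ structure; together with the fact that $|\mathcal{R}| = \tau$ is absorbed by the closure parameter $\max(\kappa,\tau)^+$, this allows a single $R_{\sigma^*}$ to carry the contradiction. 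The main obstacle is this final coordination step: turning the $2^\mu$-fold splitting produced via $\mathbb{R}$-closure into $\kappa^+$ incompatible chain-values at one common level after the $\kappa^+$-c.c.\ reduction on $\MQB$, balancing the closure, the chain condition, and the covering hypothesis in just the way suggested by the countable-cofinality argument of \cite{sinapova3}.
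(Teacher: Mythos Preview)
The paper does not give its own proof of this lemma; it merely cites \cite{sinapova3} and asserts that Sinapova's countable-cofinality argument extends to the general case. So there is nothing to compare against directly, and your task is really to reproduce (a generalisation of) that argument.

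Your framework is the correct one: assume no cofinal branch lives in $V[E]$, and build a splitting tree of $\mathbb{R}$-conditions of height $\mu = \max(\kappa,\tau)$ using the closure of $\mathbb{R}$. However, the way you close the argument is not right, and this is a genuine gap rather than a detail. The $\kappa^+$-c.c.\ of $\MQB$ is used only via Easton's lemma, to guarantee that $\mathbb{R}$ remains $\mu^+$-distributive in $V[E]$ (and that the splitting-tree construction, carried out with names in $V$, makes sense). There is no ``Silver-style reflection using the $\kappa^+$-c.c.\ of $\MQB$'' at the end; once the splitting tree is built, $\MQB$ plays no further role.

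The actual contradiction is a pigeonhole at a single level, and this is exactly where hypothesis~(2) is essential --- not merely to ``ensure the chains meet the relevant levels'', but to produce the common level. If you split only on the single unbounded branch $b_{\sigma^*,\delta^*}$, then for the $2^\mu$ leaf-conditions $r_\eta$ you cannot in general extend each $r_\eta$ to force an element of $b_{\sigma^*,\delta^*}$ at one fixed level $\alpha^*$ above all the splitting levels, because $b_{\sigma^*,\delta^*}$ is only cofinal and need not meet $\alpha^*$. What one does instead is: at each splitting node track \emph{all} the chains $c^{\sigma,\delta}_{r_s}$ (their common bound is below $\nu^+$ since there are at most $\tau\cdot\kappa \leq \mu$ of them), and at the end fix a single $\alpha^* \in I$ above every splitting level. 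By hypothesis~(2), each leaf $r_\eta$ can be extended to $r'_\eta$ forcing some $\langle \alpha^*, \zeta_\eta\rangle \in b_{\sigma_\eta,\delta_\eta}$. Now the possible triples $(\sigma_\eta, \delta_\eta, \zeta_\eta)$ number at most $\tau\cdot\kappa\cdot\kappa \leq \mu$, while there are $2^\mu > \mu$ leaves; so two leaves share a triple, and the split between them together with the tree-order structure of $R_{\sigma}$ (and downward closure of branches) yields the contradiction. Your write-up should make this pigeonhole explicit and drop the appeal to the chain condition at that stage.
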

We will frequently use Lemmas 2.3 and 2.4, by defining a tuple $\mathcal{S} = \langle I, \mathcal{R}\rangle$
and claiming that it forms a narrow system which satisfies the assumptions of the above lemmas, without verifying our claim. In such cases,
either our claim is trivial or can be obtained by standard arguments, such as those given in
\cite{sinapova1}, \cite{sinapova2} or \cite{sinapova3}.

\section{Diagonal Magidor forcing with interleaved collapses}
In this section we give the proof of Theorem 1.1 using diagonal Magidor forcing with interleaved collapses, introduced by Sinapova \cite{sinapova}. Thus suppose that $\lambda$  is a singular limit of $\eta$ supercompact cardinals, where $\eta \leq \lambda$ is a limit ordinal. If $\eta=\lambda,$ then $\lambda$ itself is the limit of the first $\eta$ measurable cardinals, and  there is nothing to do. So we can assume that
$\eta<\lambda.$ Let $\langle  \kappa_\xi: \xi \leq \eta      \rangle$
be an increasing and continuous sequence of cardinals so that:
\begin{enumerate}
\item Each $\kappa_\xi,$ where $\xi=0$ or $\xi$ is a successor ordinal is a supercompact cardinal,
\item $\kappa_\eta=\lambda.$
\end{enumerate}
We further assume that
\begin{enumerate}
\item [(3)] $\eta<\kappa_0=\kappa.$
\end{enumerate}
At the end of the proof, we will show  how to remove this extra assumption. Set $I=\{\xi:$ $\xi=0$ or $\xi$ is a successor ordinal$ \}$. It follows that for each
$\xi \leq \eta,$ if $\xi\in I$, then $\kappa_\xi$ is a supercompact cardinal and if $\xi \notin I,$ then $\kappa_\xi$ is a singular cardinal.
We may further assume that
\begin{enumerate}
\item [(4)] For $\xi\in I, \kappa_\xi$ is Laver indestructible under $\kappa_\xi$-directed closed forcing notions,
\item [(5)] For all $\xi \leq \eta, 2^{\kappa_\xi}=\kappa_{\xi}^+.$
\end{enumerate}
\begin{notation}
For each cardinal $\alpha,$ let $\alpha_*$ denote the least measurable cardinal above $\alpha$, if it exists, and let $\alpha_*$ be undefined otherwise.
\end{notation}
Using the above notation, we have the following lemma, that we will use it extensively without any mention.
\begin{lemma}
Assume $\alpha > \eta$ and $\alpha_*$ is well-defined. Then $\alpha^{+\eta} < \alpha_*;$ in particular $\forall \xi \leq \eta, (\alpha^{+\xi})_*=\alpha_*.$
\end{lemma}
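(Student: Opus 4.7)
The plan is to exploit the fact that any measurable cardinal is inaccessible (in particular, a regular strong limit) and then proceed by a straightforward transfinite induction on $\xi \leq \eta$ to establish $\alpha^{+\xi} < \alpha_*$. Since $\alpha > \eta$ and $\alpha < \alpha_*$, we have at the outset $\eta < \alpha_*$, which is the key inequality needed to control limit stages of the induction.

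For the induction itself, the base case $\alpha^{+0} = \alpha < \alpha_*$ is immediate. For the successor step, if $\alpha^{+\xi} < \alpha_*$ then $\alpha^{+(\xi+1)} = (\alpha^{+\xi})^+ < \alpha_*$ since $\alpha_*$ is a limit cardinal. For a limit $\xi \leq \eta$, I would write $\alpha^{+\xi} = \sup_{\zeta < \xi} \alpha^{+\zeta}$; each term is $< \alpha_*$ by the induction hypothesis, and $\xi \leq \eta < \alpha < \alpha_*$, so regularity of $\alpha_*$ gives $\alpha^{+\xi} < \alpha_*$. Taking $\xi = \eta$ yields the main inequality.

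For the ``in particular'' assertion, fix $\xi \leq \eta$. Since $\alpha^{+\xi} \geq \alpha$, any measurable above $\alpha^{+\xi}$ is a fortiori above $\alpha$, so by minimality of $\alpha_*$ we have $(\alpha^{+\xi})_* \geq \alpha_*$. Conversely, the first part of the lemma gives $\alpha^{+\xi} < \alpha_*$, so $\alpha_*$ itself is a measurable cardinal strictly above $\alpha^{+\xi}$, whence $(\alpha^{+\xi})_* \leq \alpha_*$. Equality follows.

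I do not expect any real obstacle; the statement is essentially a bookkeeping observation about how slowly cardinal successors climb compared to jumps from one measurable to the next. The only point worth emphasizing in the writeup is that the hypothesis $\alpha > \eta$ is precisely what makes the index $\xi$ of the transfinite iteration small enough that regularity of the inaccessible $\alpha_*$ prevents the supremum from reaching $\alpha_*$ at a limit stage.
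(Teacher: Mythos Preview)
Your argument is correct and is exactly the natural elementary verification one would expect; the paper in fact states this lemma without proof, treating it as a routine observation. Your writeup would serve perfectly well as the omitted details.
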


Let $\mathbb{C}$ be the Easton support iteration of $\Col(\kappa_i^{+}, < \kappa_{i+1}), i<\eta,$ and let $H$ be $\mathbb{C}$-generic over $V$. The following lemma can be proved easily:
\begin{lemma}
$(a)$ $\mathbb{C}$ is $\kappa$-directed closed, in particular $\kappa$ remains supercompact in $V[H].$

$(b)$ In $V[H]$, for all $\xi \leq \eta, \kappa_\xi=\kappa^{+\xi}$.
\end{lemma}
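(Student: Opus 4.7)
The plan is to treat parts (a) and (b) separately, each by routine bookkeeping.

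For (a), closure is verified factor by factor. Each collapse $\Col(\kappa_i^+, <\kappa_{i+1})$ is $\kappa_i^+$-directed closed, and since $\kappa \le \kappa_i^+$ for every $i < \eta$, each factor is in particular $\kappa$-directed closed. Because assumption (3) gives $\eta < \kappa$, every condition in $\mathbb{C}$ has support contained in the bounded set $\eta \subseteq \kappa$, so any $\kappa$-directed family of fewer than $\kappa$ conditions admits a common lower bound by gluing coordinate-wise along the union of supports (which is still bounded in $\eta < \kappa$). Hence $\mathbb{C}$ is $\kappa$-directed closed, and the preservation of supercompactness of $\kappa$ in $V[H]$ is then immediate from the Laver indestructibility assumption (4).

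For (b), I would induct on $\xi \le \eta$ via the standard factoring $\mathbb{C} \cong \mathbb{C}_\xi * \mathbb{C}_{[\xi,\eta)}$. The two points to record are: (i) $\mathbb{C}_\xi$ has cardinality $\kappa_\xi$ (using the GCH assumption (5)) and satisfies a chain condition sharp enough to preserve all cardinals at and above $\kappa_\xi^+$, which uses that each intermediate $\kappa_\zeta$ with $\zeta \le \xi$ remains inaccessible in $V[H_\zeta]$; and (ii) the tail $\mathbb{C}_{[\xi,\eta)}$ is forced to be $\kappa_\xi^+$-directed closed, since its first factor $\Col(\kappa_\xi^+, <\kappa_{\xi+1})$ is $\kappa_\xi^+$-directed closed and every subsequent factor is even more highly closed. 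From (ii) the tail adds no new bounded subsets of $\kappa_\xi^+$, so the cardinal structure of $V[H]$ below $\kappa_\xi^+$ coincides with that of $V[H_\xi]$. Combined with (i), this reduces the identification $\kappa_\xi = \kappa^{+\xi}$ in $V[H]$ to the analogous identification inside $V[H_\xi]$, which follows inductively from the direct effect of each Levy-collapse factor on the interval between consecutive $\kappa_\zeta$'s (collapsing everything in between down to the small cardinal while the next $\kappa_{\zeta+1}$ survives by its own inaccessibility), together with continuity of $\langle \kappa_\zeta\rangle$ at limit $\xi$.

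The one mild obstacle will be the chain-condition bookkeeping for $\mathbb{C}_\xi$ at successor stages, where one must combine the high closure of the appropriate tails with the chain condition of the new Levy-collapse factor, plus the analogous closure analysis at limit stages within the Easton-support framework. Both are entirely standard in the iterated-collapse literature (as used, for instance, in the work of Sinapova cited in the paper), so the lemma indeed holds by a routine argument.
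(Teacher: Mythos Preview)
The paper does not prove this lemma; it states only that it ``can be proved easily'' and moves on. Your argument is the standard one and is essentially what the author has in mind.

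One small correction to your reasoning in part (a): for the coordinate-wise lower bound to be a condition in $\mathbb{C}$, the relevant constraint is not that the union of supports lies inside $\eta<\kappa$ (it always does), but that this union is still an \emph{Easton} subset of $\eta$, i.e.\ bounded below every inaccessible $\mu\le\eta$. If such a $\mu$ exists, a union of $\ge\mu$ many Easton supports need not be Easton, so literal $\kappa$-directed closure of the Easton-support iteration can fail. This is a harmless imprecision that the paper shares: since each factor $\Col(\kappa_i^{+},<\kappa_{i+1})$ is $\kappa_i^{+}$-closed while $|\mathbb{C}_i|\le\kappa_i$, the iteration is equivalent to the full-support product, and with full support your gluing argument goes through verbatim (the union of $<\kappa$ many subsets of $\eta$ is again a subset of $\eta$, which is now all that is required). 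Laver indestructibility then applies as you say. Your treatment of part (b) is correct as written.
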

Work in $V[H]$.
We now define our main forcing construction. The forcing defined below is motivated from \cite{sinapova} and
\cite{sinapova2}, where we refer  to them for more details.
The following lemma can be proved as in \cite{sinapova}, Lemma 3.3 and \cite{sinapova2}, Proposition 2.
\begin{lemma}
In $V[H],$ there are sequences $\langle U_\xi: \xi<\eta     \rangle$, 
 and $\langle K_\xi: \xi<\eta  \rangle$ such that:
\begin{enumerate}
\item [(a)] $U_\xi$ is a normal measure on $P_\kappa(\kappa^{+\xi})$,

\item [(b)] $\kappa$ is $\kappa^{+\xi}$-supercompact in $N_\xi=Ult(V[H], U_\xi),$

\item [(c)] The sequence $\langle U_\xi: \xi<\eta     \rangle$  is Mitchell increasing, i.e., $\zeta<\xi<\eta \Rightarrow U_\zeta \in N_\xi,$


\item [(d)] $K_\xi$ is $\Col(\kappa_*^{++}, < j_{U_\xi}(\kappa))_{N_\xi}$-generic over $N_\xi$.
\end{enumerate}
\end{lemma}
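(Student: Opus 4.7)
The plan is to derive the entire sequence $\langle U_\xi : \xi < \eta \rangle$ uniformly from a single sufficiently supercompact embedding of $V[H]$, and then to build each $K_\xi$ inside $V[H]$ by exploiting the gap between the size of $N_\xi$ and the closure of the collapse. Two ingredients are crucial. First, the Laver indestructibility of $\kappa$ in $V$ combined with the $\kappa$-directed closure of $\mathbb{C}$ guarantees that $\kappa$ remains supercompact in $V[H]$. Second, GCH in $V[H]$ above $\kappa$, which follows from (5) together with the chain-condition and closure properties of the stages of $\mathbb{C}$, is the cardinal-arithmetic input used throughout.

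First I would fix a regular $\theta \geq (2^{\kappa^{+\eta}})^+$ in $V[H]$ and a supercompactness embedding $i : V[H] \to M$ with $\crit(i) = \kappa$ and $M^\theta \subseteq M$. For each $\xi < \eta$ the derived measure
$$U_\xi = \{\, X \subseteq P_\kappa(\kappa^{+\xi}) : i[\kappa^{+\xi}] \in i(X)\,\}$$
is a normal, $\kappa$-complete, fine ultrafilter on $P_\kappa(\kappa^{+\xi})$; by the classical characterization of supercompactness measures, the associated ultrapower $N_\xi$ is $\kappa^{+\xi}$-closed, so $\kappa$ is $\kappa^{+\xi}$-supercompact in $N_\xi$. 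This establishes clauses (a) and (b). For clause (c), fix $\zeta < \xi < \eta$: by GCH, $|U_\zeta| \leq 2^{\kappa^{+\zeta}} = \kappa^{+\zeta+1} \leq \kappa^{+\xi}$, and each element of $U_\zeta$ is a subset of $P_\kappa(\kappa^{+\zeta})$ of size at most $\kappa^{+\zeta}$, hence lies in $N_\xi$ by the $\kappa^{+\xi}$-closure; therefore $U_\zeta$, being a subset of $N_\xi$ of size at most $\kappa^{+\xi}$, itself belongs to $N_\xi$.

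For clause (d), the forcing $\Col(\kappa_*^{++}, < j_{U_\xi}(\kappa))$ as computed in $N_\xi$ is $\kappa_*^{++}$-closed there, hence also in $V[H]$ by downward absoluteness of closure. Using GCH and the standard bound $j_{U_\xi}(\kappa) < (2^{\kappa^{+\xi}})^+$, the number of maximal antichains of this forcing lying in $N_\xi$ is some cardinal $\mu_\xi$ satisfying $\mu_\xi < \kappa^{+\eta} < \kappa_*$ (using that $\eta$ is a limit, so $\xi + 3 < \eta$). Enumerating these antichains in $V[H]$ and threading them by a descending sequence of length $\mu_\xi$ via the $\kappa_*^{++}$-closure of the forcing produces an $N_\xi$-generic filter $K_\xi \in V[H]$. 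The main obstacle is this last step: one must verify that $\kappa_*^{++}$ is computed the same way in $N_\xi$ and in $V[H]$, so that the available closure genuinely matches the closure in the statement, and that the bound on the number of antichains in $N_\xi$ stays below $\kappa_*$ uniformly in $\xi < \eta$.
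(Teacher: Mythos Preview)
The paper gives no proof of this lemma, only a citation to Sinapova's thesis and \cite{sinapova2}; your overall strategy (derive all the $U_\xi$ from a single sufficiently supercompact embedding and build each $K_\xi$ by diagonalizing over the dense sets of $N_\xi$) is exactly the standard one. Two points in your write-up need correction, however.

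First, your justification of (b) is a non sequitur: the $\kappa^{+\xi}$-closure of $N_\xi$ in $V[H]$ says nothing about whether $N_\xi$ contains a normal measure on $P_\kappa(\kappa^{+\xi})$. The correct argument uses the factor map $k_\xi: N_\xi \to M$ with $k_\xi \circ j_{U_\xi} = i$. One checks $\crit(k_\xi) > \kappa^{+\xi}$, so $k_\xi(\kappa)=\kappa$ and $k_\xi(\kappa^{+\xi})=\kappa^{+\xi}$; since $|U_\xi|=2^{\kappa^{+\xi}}=\kappa^{+\xi+1}\leq\theta$ and $M$ is $\theta$-closed, $U_\xi\in M$, so $M\models$ ``$\kappa$ is $\kappa^{+\xi}$-supercompact'', and elementarity of $k_\xi$ reflects this to $N_\xi$.

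Second, your treatment of (d) conflates $(\kappa_*)^{V[H]}$ with $(\kappa_*)^{N_\xi}$, and your flagged concern is in fact realized: they differ. In $N_\xi$ the ordinal $j_{U_\xi}(\kappa)$ is supercompact, hence a limit of measurables, so $(\kappa_*)^{N_\xi}<j_{U_\xi}(\kappa)$, an ordinal of $V[H]$-cardinality $\leq\kappa^{+\xi+1}$; whereas $(\kappa_*)^{V[H]}>\kappa^{+\eta}$. The relevant $\kappa_*$ in the statement is the $N_\xi$-one (this is what is reflected to $(\kappa_x)_*$ in Definition~3.5). The count that rescues the argument is: the forcing lies in $N_\xi$, which is closed under $\kappa^{+\xi}$-sequences and in which the forcing is $((\kappa_*)^{N_\xi})^{++}$-closed with $((\kappa_*)^{N_\xi})^{++}>\kappa^{+\xi+1}$; hence the forcing is $\kappa^{+\xi+1}$-closed in $V[H]$. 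The number of dense sets in $N_\xi$ is at most $|j_{U_\xi}(\kappa^+)|^{V[H]}\leq\kappa^{+\xi+1}$, so a descending $\kappa^{+\xi+1}$-sequence in $V[H]$ meets them all. Your inequality $\mu_\xi<\kappa^{+\eta}<\kappa_*$ is thus beside the point; the closure you actually have in $V[H]$ is only $\kappa^{+\xi+1}$, and that is precisely enough.
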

For $\xi<\eta,$ let $X_\xi$ be the set of all $x \in P_\kappa(\kappa^{+\xi})$ such that
\begin{enumerate}
\item $\kappa_x=x \cap \kappa$ is an ordinal $> \eta,$

\item For all $\zeta \leq \xi, otp(x \cap \kappa^{+\zeta})=\kappa_x^{+\zeta}$, in particular $otp(x)=\kappa_x^{+\xi}$,

\item For all $\zeta \leq \xi, (\kappa_x^{+\zeta})^{<\kappa_x} \leq \kappa_x^{+\zeta+1}.$


\end{enumerate}
By standard reflection arguments each $X_\xi\in U_\xi.$
For $\zeta < \xi < \eta, x \in X_\xi$ and $Y \subseteq P_{\kappa_x}(x \cap \kappa^{+\zeta}),$ let
\[
\bar{Y} =\{ \{otp(x \cap \delta): \delta \in y  \}: y \in Y     \} \subseteq P_{\kappa_x}(otp(x \cap \kappa^{+\zeta}))=P_{\kappa_x}(\kappa_x^{+\zeta}).
\]
Since $U_\zeta \in N_\xi,$ there is a function $x \mapsto \bar{U}^\zeta_{\xi, x}$ such that $U_\zeta = [x \mapsto \bar{U}^\zeta_{\xi, x}]_{U_\xi}$
and each $\bar{U}^\zeta_{\xi, x}$ is a normal measure on $P_{\kappa_x}(\kappa_x^{+\zeta})=P_{\kappa_x}(otp(x \cap \kappa^{+\zeta}))$.
Lift this measure to a normal measure
$U^\zeta_{\xi, x}$ on $P_{\kappa_x}(x \cap \kappa^{+\zeta})$, so that we have
\[
\bar{U}^\zeta_{\xi, x}=\{\bar{Y} \subseteq  P_{\kappa_x}(\kappa_x^{+\zeta}): Y \in U^\zeta_{\xi,x}            \}.
\]
For $\xi < \eta,$ let
\[
B_\xi=\{ z \in X_\xi: \forall \zeta < \tau < \xi ~(\bar{U}^{\zeta}_{\xi,z} = [x \mapsto \bar{U}^{\zeta}_{\eta,x} ]_{U^{\eta}_{\xi,z}})                   \}.
\]
Then $B_\xi \in U_\xi.$ Also for $\zeta < \xi < \eta$ and $x \in B_\xi$ let $j^{\zeta}_{\xi,x}=j_{\bar{U}^{\zeta}_{\xi,x}}$, and assume
$K^{\zeta}_{\xi,x}$ is such that $K_\zeta = [x \mapsto K^{\zeta}_{\xi,x}]_{U_\xi}$. Then
$K^{\zeta}_{\xi,x}$ is $\Col((\kappa_x)_*^{++}, < j^{\zeta}_{\xi,x}(\kappa_x))_{N^{\zeta}_{\xi,x}}$-generic
over $N^{\zeta}_{\xi,x},$ where  $N^{\zeta}_{\xi,x}=Ult(V, \bar{U}^{\zeta}_{\xi,x})$.

We are now ready to define our main forcing notion. The forcing is essentially the same as the forcing construction in section 3.2 of \cite{sinapova}.
\begin{definition}
A condition in $\MPB$ is a tuple $p= \langle  g,f, H, F     \rangle$ where:
\begin{enumerate}
\item $\dom(g)$ is a finite subset of $\eta$ and $\dom(H)=\eta \setminus \dom(g),$

\item For each $\xi \in \dom(g), g(\xi) \in B_\xi,$

\item For $\zeta < \xi$ in $\dom(g), g(\zeta) \prec g(\xi),$ i.e., $g(\zeta) \subseteq g(\xi)$ and $otp(g(\zeta)) < \kappa_{g(\xi)}$,

\item If $\xi > \max(\dom(g)),$ then $H(\xi) \in U_\xi, H(\xi) \subseteq B_\xi,$

\item If $\xi \notin \dom(g)$ and $\xi < \max(\dom(g)),$ then setting $\zeta=\min(\dom(g) \setminus \xi),$ we have
$H(\xi) \in U^\xi_{\zeta, g(\zeta)}$,

\item If $\xi < \zeta, \xi \in \dom(g)$ and $\zeta \notin \dom(g),$ then for all $z \in H(\zeta), g(\xi) \prec z,$

\item $\dom(f)=\dom(g),$ and for $\xi \in \dom(f)$:
\begin{enumerate}
\item [(7-1)] If  $\xi < \max(\dom(f)),$ then setting $\zeta=\min(\dom(f)\setminus \xi),$
we have $f(\xi) \in \Col((\kappa_{g(\xi)})_*^{++}, < \kappa_{g(\zeta)})$,

\item [(7-2)] If $\xi = \max(\dom(f)),$ then $f(\xi) \in \Col((\kappa_{g(\xi)})_*^{++}, < \kappa),$
\end{enumerate}
\item $\dom(F)=\dom(H)$,

\item For $\xi \in \dom(F), F(\xi)$ is a function with domain $H(\xi)$, and for $y \in H(\xi)$
\begin{enumerate}
\item [(9-1)] If $\xi < \max(\dom(g))$, then setting $\zeta=\min(\dom(g)\setminus \xi)$ and $x=g(\zeta),$
we have $F(\xi)(y) \in \Col((\kappa_y)_*^{++}, < \kappa_x)$ and $[\bar{F}(\xi)]_{U^\xi_{\zeta,x}} \in K^\xi_{\zeta,x},$
where $\bar{F}(\xi)$ is defined on $P_{\kappa_x}(otp(x \cap \kappa_\xi))$ by  $\bar{y} \mapsto F(\xi)(y)$ (where $\bar{y}=\{ otp(x \cap \eta): \eta \in y  \}$),

\item [(9-2)] If $\xi > \max(\dom(g)),$ then $F(\xi)(y) \in \Col((\kappa_y)_*^{++}, <\kappa)$ and $[F(\xi)]_{U_\xi} \in K_\xi.$

\end{enumerate}
\end{enumerate}
\end{definition}
Given a condition $p \in \MPB,$ we denote it by
\[
p= \langle  g^p, f^p, H^p, F^p      \rangle.
\]
We call $\langle  g^p, f^p  \rangle$ the stem of $p$, and denote it by $stem(p)$.
The following definition will be used later.
\begin{definition}
The pair $\langle  g, f  \rangle$ is a $\MPB$-stem, if there exists $p\in \MPB$ such that $stem(p)=\langle  g, f  \rangle $
\end{definition}
We now define the order relation.  In fact we define two kind of order relations $\leq$ and $\leq^*$.
\begin{definition}
Let $p, q \in \MPB.$

$(a)$ $p \leq q$ ($p$ is an extension of $q$) iff
\begin{enumerate}
\item $g^p \supseteq g^q,$

\item $\xi \in \dom(g^p) \setminus \dom(g^q) \Rightarrow g^p(\xi) \in H^q(\xi),$

\item If $\xi \notin \dom(g^p),$ then $H^p(\xi) \subseteq H^q(\xi),$

\item If $\xi \in \dom(f^q)$ and $\xi < \max(\dom(f^p)),$ then setting $\zeta=\min(\dom(f^p \setminus \xi))$, we have
$f^p(\xi) \leq f^q(\xi) \upharpoonright \kappa_{g^p(\zeta)}$,

\item If $\xi=\max(\dom(f^q)),$ then $f^p(\xi) \leq f^q(\xi),$

\item If $\xi \in \dom(f^p) \setminus \dom(f^q),$ then setting $\zeta=\min(\dom(f^p \setminus \xi))$, we have
$f^p(\xi) \leq F^q(\xi)(g^p(\xi)) \upharpoonright \kappa_{g^p(\beta)}$,

\item If $\xi \in \dom(f^p) \setminus \dom(f^q)$ and $\xi=\max(\dom(f^p)),$ then $f^p(\xi) \leq F^q(\xi)(g^p(\xi)),$

\item If $\max(\dom(g^p)) < \xi,$ then for each $y \in H^p(\xi), F^p(\xi)(y) \leq F^q(\xi)(y),$

\item If $\xi \notin \dom(g^p)$ and $\xi < \max(\dom(g^p)),$ then setting $\zeta=\min(\dom(g^p)\setminus \xi),$ for each $y \in H^p(\xi)$
we have $F^p(\xi)(y) \leq F^q(\xi)(y) \upharpoonright \kappa_{g^p(\zeta)}$.
\end{enumerate}

$(b)$ $p \leq^* q$ ($p$ is a Prikry or a direct extension of $q$) iff $p \leq q$ and $\dom(g^p)=\dom(g^q).$
\end{definition}
Let's state the main properties of the forcing notion $\MPB.$ Let $G$ be $\MPB$-generic over $V[H]$, and let $g^*=\bigcup_{p\in G}g^p.$
\begin{lemma}
(Basic properties of the forcing notion $\MPB$)

\begin{enumerate}
\item [(a)] $\MPB$ satisfies the $\kappa_\eta^+$-c.c..

\item [(b)] $g^*$ is a function with domain $\eta.$

\item [(c)] For $\xi<\eta$ set $x^*_\xi=g^*(\xi)$ and $\tau_\xi=\kappa_{x^*_\xi}=x^*_\xi \cap \kappa.$ Then $\kappa_\eta=\bigcup_{\xi<\eta}x^*_\xi$
and $\kappa=\sup_{\xi<\eta}\tau_\xi.$
\end{enumerate}
\end{lemma}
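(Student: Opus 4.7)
The overall plan is: prove (a) by reducing the $\kappa_\eta^+$-chain condition to compatibility of same-stem conditions together with a count of stems; derive (b) from a direct density argument; and derive (c) from (b) via normality of the measures $U_\xi$. I expect the compatibility step to be the only part with real content, the rest being bookkeeping.

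For (a), the first step is the Prikry-type compatibility lemma: any $p, q \in \MPB$ with $\stem(p) = \stem(q) = \langle g, f\rangle$ have a common $\leq^*$-extension $r$. I would construct $r$ coordinatewise on $\xi \in \eta \setminus \dom(g)$, setting $H^r(\xi) = H^p(\xi) \cap H^q(\xi)$, which still lies in the appropriate measure ($U_\xi$ if $\xi > \max \dom(g)$, and $U^\xi_{\zeta, g(\zeta)}$ with $\zeta = \min(\dom(g) \setminus (\xi+1))$ otherwise). To combine $F^p(\xi)$ and $F^q(\xi)$, note that clauses (9-1)--(9-2) of Definition~3.4 force their ultrapower classes $[F^p(\xi)]$ and $[F^q(\xi)]$ to lie in the same pre-chosen generic $K_\xi$ or $K^\xi_{\zeta, g(\zeta)}$. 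So I pick a common extension $\sigma$ of these two classes inside that generic, represent $\sigma = [s]$, and observe by \L{}o\'s that $\{y : s(y) \leq F^p(\xi)(y),\, F^q(\xi)(y)\}$ is measure-one. Shrinking $H^r(\xi)$ to this set and putting $F^r(\xi) = s$ finishes the construction. The second step of (a) counts stems in $V[H]$: the $g$-part uses finite partial functions with values in $\bigcup_{\xi < \eta} B_\xi$, with $|B_\xi| \leq 2^{\kappa^{+\xi}} = \kappa^{+\xi+1} \leq \kappa_\eta$ by clause (5) (preserved by the small Easton iteration $\mathbb{C}$), and the $f$-part contributes boundedly below $\kappa$, giving at most $\kappa_\eta$ stems total, hence the $\kappa_\eta^+$-c.c.

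For (b), I would show that $D_\xi = \{p : \xi \in \dom(g^p)\}$ is dense for each $\xi < \eta$. Given $p$ with $\xi \notin \dom(g^p)$, intersect $H^p(\xi)$ with the $U_\xi$- (or $U^\xi_{\zeta, g^p(\zeta)}$-)measure-one sets $\{x : \kappa_x > \kappa_{g^p(\xi')}\}$ and $\{x : g^p(\xi') \prec x\}$ for each $\xi' \in \dom(g^p)$ with $\xi' < \xi$; any $x$ in the intersection lets us extend $g^p$ by $(\xi, x)$ and fill the $f$-coordinate using $F^p(\xi)(x)$ suitably restricted, satisfying clauses (3), (6) and (7) of Definition~3.4. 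For (c), the second equality $\kappa = \sup_{\xi < \eta} \tau_\xi$ follows because $\{x \in B_\xi : \kappa_x > \delta\} \in U_\xi$ for each $\delta < \kappa$, so (b) forces $\tau_\xi > \delta$ for cofinally many $\xi < \eta$, while continuity of $\langle \kappa_\xi : \xi \leq \eta\rangle$ handles limit steps; and the first equality $\kappa_\eta = \bigcup_{\xi < \eta} x^*_\xi$ uses normality: for $\alpha < \kappa_\eta$ pick $\xi_0$ with $\alpha < \kappa^{+\xi_0}$, note $\{x \in B_\xi : \alpha \in x\} \in U_\xi$ for all $\xi \geq \xi_0$, and apply the density from (b).

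The main obstacle is the compatibility step in (a): the need to produce a single function $s$ whose ultrapower class lies in the pre-chosen generic filter and whose pointwise values uniformly dominate both $F^p(\xi)(y)$ and $F^q(\xi)(y)$ on a measure-one set. This is where the coherent choice of measures and generics in Lemma~3.3 and the coherence enforced by the sets $B_\xi$ are essential; everything else in the lemma is either routine density or standard cardinal-arithmetic bookkeeping.
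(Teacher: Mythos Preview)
Your proposal is correct and follows the standard argument for diagonal Prikry-type forcings with interleaved collapses: same-stem compatibility via the directedness of the pre-chosen generics $K_\xi$ (resp.\ $K^\xi_{\zeta,g(\zeta)}$), a stem count bounded by $\kappa_\eta$, and routine density/normality arguments for (b) and (c). The paper does not give a proof of this lemma, treating these as basic properties and referring the reader to Sinapova's work where the forcing was introduced, so your write-up is exactly what one would expect to fill in.
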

We now state a factorization property of $\MPB.$ Assume $p \in \MPB$ with $g^p=\{ \langle \alpha, x \rangle \},$ where $\alpha < \eta$ is a limit ordinal.
For $\xi < \alpha$ let $v_\xi=\bar{U}^\xi_{\alpha,x}$. Also for $\xi < \zeta < \alpha$ let $y \mapsto \bar{v}^\xi_{\zeta,y}$ be such that
$v_\xi=[y \mapsto \bar{v}^\xi_{\zeta,y}]_{v_\zeta}$, where each $\bar{v}^\xi_{\zeta,y}$
is a normal measure on $P_{\kappa_x \cap y}(otp(\kappa_x \cap y))$. Also let $v^\xi_{\zeta,y}$ be the lift of
$\bar{v}^\xi_{\zeta,y}$ on $P_{\kappa_x \cap y}(\kappa_x \cap y)$.
Also we can find the sets $b_\xi \subseteq \overline{H(\xi)}, b_\xi \in v_\xi,$ such that for all $\zeta < \gamma < \xi< \alpha$ and
 all $y \in b_\xi, \bar{v}^{\zeta}_{\xi,y}=[z \mapsto \bar{v}^{\zeta}_{\gamma, z}]_{v^{\gamma}_{\xi,y}}$.
Also let $k_\xi=K^\xi_{\alpha,x}.$
\begin{lemma}
(The factorization property)
With the same notation as above, the forcing notion $\MPB / p$ can be factored as $\MPB_0 \times \MPB_1,$
where
\begin{enumerate}
\item $\MPB_0$ is defined using:
\begin{itemize}
\item The normal measures $v_\xi=\bar{U}^\xi_{\alpha,x}, \xi < \alpha,$
\item The sets $b_\xi, \xi < \alpha,$
\item The functions $y \mapsto \bar{v}^\xi_{\zeta,y},$ for $\xi < \zeta < \alpha,$
\item The generic filters $k_\xi, \xi < \alpha.$
\end{itemize}
\item $\MPB_1$ is defined using:
\begin{itemize}
\item The normal measures $U_\xi, \alpha < \xi < \eta,$
\item The sets $H(\xi), \alpha < \xi < \eta,$
\item The functions $x \mapsto \bar{U}^\xi_{\zeta,x},$ for $\alpha < \xi < \zeta < \eta,$
\item The generic filters $K_\xi, \alpha < \xi < \eta.$
\end{itemize}
\end{enumerate}
\end{lemma}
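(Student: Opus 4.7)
The plan is to exhibit an order-isomorphism $\iota\colon \MPB_0 \times \MPB_1 \to \MPB/p$ that glues a below-$\alpha$ condition and an above-$\alpha$ condition along the fixed coordinate $(\alpha, x)$. Specifically, given $(q_0, q_1)$, set $g^{\iota(q_0, q_1)} = g^{q_0} \cup \{\langle \alpha, x\rangle\} \cup g^{q_1}$, absorb the collapse data at the coordinate $\alpha$ into $q_1$ (since by Definition 3.4(7-1) the collapse at $\alpha$ is determined by the next stem coordinate above $\alpha$), and take $H^{\iota(q_0,q_1)}, F^{\iota(q_0,q_1)}$ to be the concatenation of the corresponding parts of $q_0$ and $q_1$ at the complementary index sets. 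The inverse $\pi\colon \MPB/p \to \MPB_0 \times \MPB_1$ sends $q \leq p$ to the pair obtained by restricting each component of $q$ to coordinates $<\alpha$ and to coordinates $\geq \alpha$ respectively; this is well-defined because every $q\leq p$ has $\alpha\in\dom(g^q)$ with $g^q(\alpha)=x$.

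Once the map is in place, three routine checks remain. First, $\iota(q_0, q_1)$ is a genuine condition in $\MPB/p$: clauses (1)--(9) of Definition 3.4 are local in the coordinate $\xi$, and hold for $\xi < \alpha$ because $q_0 \in \MPB_0$ and for $\xi > \alpha$ because $q_1 \in \MPB_1$. The only potential interaction across $\alpha$ appears in clauses (5) and (9-1) at a coordinate $\xi < \alpha$ with no stem point of $q_0$ strictly above $\xi$: here $\MPB/p$ requires $H^q(\xi) \in U^\xi_{\alpha, x}$ (which corresponds via $\otp$ to $\bar U^\xi_{\alpha, x} = v_\xi$), and $[\bar F^q(\xi)]_{U^\xi_{\alpha, x}} \in K^\xi_{\alpha, x} = k_\xi$, both of which are precisely the bounds imposed on the analogous component of $q_0$ by $\MPB_0$. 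Second, order preservation in Definition 3.6 splits coordinate-by-coordinate, with no interaction across the boundary $\alpha$ because $g^q(\alpha)=x$ is fixed in every extension. Third, $\iota\circ\pi$ and $\pi\circ\iota$ are the identity by construction, so $\iota$ is a bijective order-isomorphism, which also preserves the direct extension order $\leq^*$ since enlarging the stem is additive across the two coordinate intervals.

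The main technical obstacle is the bookkeeping around measure coherence: verifying that the derived data $(v_\xi, b_\xi,\ y \mapsto \bar v^\xi_{\zeta, y},\ k_\xi)$ defining $\MPB_0$ really produce the same admissible sets $H^q(\xi)$ and collapse values $F^q(\xi)(y)$ for $\xi<\zeta<\alpha$ as the ambient global data $(U_\xi,\ H(\xi),\ x\mapsto\bar U^\xi_{\zeta, x},\ K_\xi)$ imposed by Definition 3.4 when restricted to coordinates below $\alpha$. This reduces to the coherence identities $\bar v^\xi_{\zeta, y} = [z \mapsto \bar v^\xi_{\gamma, z}]_{v^\gamma_{\zeta, y}}$ on $b_\zeta$ and the analogous Łoś-style identity $k_\xi = [y \mapsto k^\xi_{\zeta, y}]$ for the collapse generics; but these were arranged in advance by the requirement $x \in B_\alpha$ and by the definitions of the derived data in the paragraph preceding the lemma, so no new combinatorics is needed. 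The lemma is then essentially a tautological restatement of how $\MPB$ was built coordinate by coordinate.
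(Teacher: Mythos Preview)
The paper does not prove this lemma; it is stated without proof as a standard structural property of diagonal supercompact Prikry forcing with interleaved collapses, with the construction attributed to Sinapova's thesis and papers. Your outline is the correct and standard approach: split a condition $q\leq p$ along the fixed coordinate $(\alpha,x)$, use the order-type bijection between $x\cap\kappa^{+\xi}$ and $\kappa_x^{+\xi}$ to transport the below-$\alpha$ data into $\MPB_0$, and observe that the only nontrivial content---the agreement between the locally derived measures and generics $(v_\xi,b_\xi,k_\xi)$ and the global ones restricted below $\alpha$---was secured in advance by the hypothesis $x\in B_\alpha$ and the definitions preceding the lemma.

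Two small remarks. First, you cite Definition~3.4, but the forcing is Definition~3.5 in the paper (and the order is Definition~3.7). Second, be slightly more careful about the collapse at coordinate $\alpha$: the lemma lists only data for $\alpha<\xi<\eta$ in $\MPB_1$, so the condition $f^q(\alpha)\in\Col((\kappa_x)_*^{++},{<}\kappa)$ (or its restriction to $\kappa_{g^q(\zeta)}$ once a stem point $\zeta>\alpha$ appears) is not attached to any measure in the list but functions as the \emph{base collapse} of $\MPB_1$. Your phrase ``absorb the collapse data at $\alpha$ into $q_1$'' is correct in spirit, but the justification you give (clause (7-1)) only covers the case where $q$ already has a stem point above $\alpha$; when $\alpha=\max(\dom(g^q))$ you need clause (7-2) instead. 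Neither of these affects the validity of the argument.
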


With the same notation as above, forcing with $\MPB_0$ adds a generic sequence $\langle   y_\xi: \xi<\alpha   \rangle$ such that $y_\xi \in P_{\kappa_x}(\kappa_x^{+\xi})$
and $\bigcup_{\xi<\alpha}y_\xi=\kappa_x^{+\alpha}.$ As $otp(x)=\kappa_x^{+\alpha},$ so using the resulting order isomorphism,
we can lift this chain to get a sequence $\langle   y^*_\xi: \xi<\alpha   \rangle,$ with $y^*_\xi \in P_{\kappa_x}(x \cap \kappa^{+\xi})$ whose union
is $x$. Setting $\tau^*_\xi=y_\xi \cap \kappa_x,$ we have  $\tau^*_\xi=y^*_\xi \cap \kappa=\tau_\xi$ and forcing with $\MPB_0$ collapses cardinals between $(\tau_\xi)_*^{++}$ and $\tau_{\xi+1}$ for each $\xi<\alpha.$

In general, if $p\in \MPB$ is such that $\dom(g^p)$ has size $n$, then we can factor $\MPB/p$ as the product of $n+1$ forcings as above.
\begin{lemma} (Prikry property)
\begin{enumerate}
%
\item [(a)]  $(\MPB, \leq, \leq^*)$ satisfies the Prikry property.

\item [(b)] Let $p\in \MPB, \alpha \in \dom(g^p)$ where $\alpha$ is a limit ordinal and let $\phi$ be a statement in the forcing language.
Then there is a condition $p' \leq^* p$ such that if $q \leq p$ and $q$ decides $\phi,$ then $q \upharpoonright \alpha^{\frown} p'\upharpoonright [\alpha, \eta)$
decides $\phi$ in the same way.
\end{enumerate}
\end{lemma}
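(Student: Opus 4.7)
The plan is to prove part (a) by the standard Prikry-property argument adapted to diagonal supercompact Magidor forcing with interleaved collapses, and then to deduce part (b) from the factorization of Lemma~3.7 together with the closure of the top portion of $\leq^*$. First I would verify that if one fixes the stem $(g^p, f^p)$ of $p$, the set of $q \leq^* p$ with $(g^q, f^q) = (g^p, f^p)$ is $\kappa$-directed closed (and strictly more closed above $\max(\dom(g^p))$), using that each collapse $\Col((\kappa_y)_*^{++}, <\kappa)$ is highly directed closed and that intersections of fewer than $\kappa$ many sets in the normal measures $U_\xi$ (or their lifts) remain measure-one.

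For part (a), given $p \in \MPB$ and a formula $\phi$, I would construct $p^* \leq^* p$ deciding $\phi$ by the usual ``colour and shrink'' scheme. For each way of extending $p$ by adding one new coordinate $\xi \notin \dom(g^p)$ with value $x \in H^p(\xi)$, the factorization of Lemma~3.7 presents the resulting forcing as a structurally similar problem with one more stem coordinate; applying the result recursively I would obtain a direct extension of each one-step-extended condition that decides $\phi$. I then colour each $x \in H^p(\xi)$ by ``forces $\phi$'', ``forces $\neg\phi$'', or ``undecided'', apply normality of $U_\xi$ (or of the lift $\bar{U}^\zeta_{\xi, g^p(\zeta)}$ when $\xi$ lies between stem coordinates) to obtain a measure-one homogeneous subset, and take a diagonal intersection across all $\xi$ together with a $\leq^*$-lower bound of the recursively obtained direct extensions to produce $p^*$. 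A standard density argument then rules out the ``undecided'' colour, giving that $p^*$ itself decides $\phi$.

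For part (b), I would use the factorization $\MPB/p = \MPB_0 \times \MPB_1$ at $\alpha$, with $\MPB_1$ covering the upper part. The direct extension order of $\MPB_1$ is $(\kappa_{g^p(\alpha)})_*^{++}$-directed closed while $|\MPB_0|$ lies strictly below this closure (by Lemma~3.2 applied to $\kappa_{g^p(\alpha)}$), so enumerating $\MPB_0$-conditions $q_0 \leq p \upharpoonright \alpha$ and applying (a) inside $\MPB_1$ at each stage, I would obtain a decreasing $\leq^*$-sequence in $\MPB_1$ whose $\leq^*$-lower bound $p'$ has the property that for any $q \leq p$, the condition $q \upharpoonright \alpha ^{\frown} p' \upharpoonright [\alpha, \eta)$ decides $\phi$ in the same way as $q$. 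The main obstacle is the diagonal aspect of the forcing: adding a coordinate $x$ at position $\xi$ replaces the lower measures $U_\zeta$ by their lifts $\bar{U}^\zeta_{\xi, x}$, so the shrinking of lower-level sets must be performed uniformly in the higher coordinate choice. Tracking the compatibility of these shrinkings is exactly where the Mitchell-increasing coherence encoded in the sets $B_\xi$ and the coherence $[x \mapsto K^\zeta_{\xi,x}]_{U_\xi} = K_\zeta$ of the generic filters pay off, ensuring that the needed shrinkings remain simultaneously measure-one in all the relevant lifted measures.
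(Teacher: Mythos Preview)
The paper does not actually prove this lemma: it is stated without proof, as one of the ``main properties of the forcing notion $\MPB$'' imported from Sinapova's work (the references \cite{sinapova}, \cite{sinapova2}), so there is no in-paper argument to compare against. Your sketch is the standard proof one finds in that literature and is essentially correct: part (a) via the colour-and-shrink induction on the length of the stem using normality and the high closure of $\leq^*$, and part (b) via the factorization lemma (note: this is the lemma immediately preceding the Prikry lemma, not Definition~3.7 as you cite) together with the fact that $|\MPB_0| < (\kappa_{g^p(\alpha)})_*^{++}$ while $(\MPB_1,\leq^*)$ is $(\kappa_{g^p(\alpha)})_*^{++}$-closed, allowing one to run through all of $\MPB_0$ and apply (a) in $\MPB_1$ at each step.

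One small point worth tightening in your write-up of (b): when you say ``applying (a) inside $\MPB_1$ at each stage'', be explicit that for each $q_0 \in \MPB_0$ you are applying the Prikry property of $\MPB_1$ to the $\MPB_1$-statement ``$q_0 \Vdash_{\check{\MPB}_0} \phi$'' (viewing the product as the iteration $\MPB_1 * \check{\MPB}_0$); this is what guarantees that once $p'_1$ has been built, any $(q_0,q_1) \leq (p_0,p'_1)$ deciding $\phi$ already has $(q_0,p'_1)$ deciding it the same way. Your identification of the diagonal obstacle---that shrinking the lower measure-one sets must be done uniformly in the higher coordinate choice, and that the Mitchell-increasing coherence via the sets $B_\xi$ is exactly what makes this work---is accurate and is indeed the heart of the matter in Sinapova's original argument.
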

Using the above lemmas, and by standard arguments, we  get the following.
\begin{lemma}
(Preservation of cardinals)
\begin{enumerate}
\item [(a)] $\kappa$ is preserved and has cofinality $cf(\eta)$ in the generic extension.

\item [(b)] All cardinals and cofinalities below $\tau_0$ are preserved.

\item [(c)] Let $\tau$ be a cardinal in $V$ with $\tau_\alpha < \tau < \tau_{\alpha}^{+\alpha+1},$ for $\alpha$ limit. Then $|\tau|^{V[H][G]}=\tau_\alpha.$

\item [(d)] Cardinals between $\kappa$ and $\kappa_\eta=\kappa^{+\eta}$ are collapsed and $\kappa^+_{V[H][G]}=\kappa^{+\eta+1}_{V[H]}=\kappa_\eta^+.$

\item [(e)] For each $\alpha < \eta,$ the cardinals between $(\tau_\alpha)_*^{++}$ and $\tau_{\alpha+1}$ are collapsed.

\item [(f)] Let $\tau<\kappa$ be a cardinal in $V$  such that for some  ordinal $\alpha < \eta, \tau_\alpha' \leq \tau \leq (\tau_\alpha)_*^{++}$, where
for successor $\alpha, \tau_\alpha'=\tau_\alpha$ and for limit $\alpha, \tau_\alpha'=\tau_{\alpha}^{+\alpha+1}.$
Then $\tau$ is preserved. Each $\tau_\alpha, \alpha < \eta$ is preserved too.
\end{enumerate}
\end{lemma}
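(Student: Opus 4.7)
My strategy is to combine the $\kappa_\eta^+$-chain condition (Lemma 3.8(a)), the factorization property $\MPB/p=\MPB_0\times\MPB_1$, and the Prikry property (Lemma 3.9), together with a closure analysis of the direct extension order on each factor. The first step I would take is to establish the key auxiliary observation: for any $p\in\MPB$ with $\alpha\in\dom(g^p)$ and $x=g^p(\alpha)$, the direct extension order $\leq^*$ on $\MPB_1$ is $(\kappa_x)_*^{++}$-closed, because every collapse component in $\MPB_1$ has base at least $(\kappa_x)_*^{++}$ and the measure-shrinking component uses only $\kappa$-complete ultrafilters. Dually, under the GCH assumption~(5), $|\MPB_0|\leq\kappa_x^{+\alpha}$, so $\MPB_0$ carries the $\kappa_x^{+\alpha+1}$-c.c.

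For the preservation clauses (a), (b), and (f), I would use a uniform Prikry-fusion argument. To show a cardinal $\tau$ is preserved, work below a condition $p\in G$ chosen so that some $\alpha\in\dom(g^p)$ yields $(\kappa_{g^p(\alpha)})_*^{++}>\tau$ (for (b) take $\alpha=\min\dom(g^p)$; for (f) take $\alpha$ to be the given index). A putative collapsing name $\dot f$ is analyzed bit-by-bit: each value $\dot f(\check\gamma)$ is decided by a direct extension in $\MPB_1$ (by Lemma 3.9), and by $(\kappa_x)_*^{++}$-closure these decisions fuse into a single $q\leq^*p$, reducing $\dot f^G$ to a $\MPB_0$-object whose existence is ruled out by the size or chain condition of $\MPB_0$. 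Clause (a) follows by combining (f) (whence $\kappa=\sup_\xi\tau_\xi$ is a limit of preserved cardinals) with the cofinal generic witness $\langle\tau_\xi:\xi<\eta\rangle$ for $\cf^{V[H][G]}(\kappa)=\cf(\eta)$.

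For the collapse clauses (c), (d), and (e), the witnessing surjections are explicit in the generic. Clause (e) is immediate from the assembled Levy collapse $\Col((\tau_\alpha)_*^{++},<\tau_{\alpha+1})$ coming from the $f^p(\alpha)$ and $F^p(\alpha)(\cdot)$ components. Clause (c) follows from the representation $x^*_\alpha=\bigcup_{\xi<\alpha}y^*_\xi$ with each $|y^*_\xi|<\tau_\alpha$ and $|\alpha|\leq\tau_\alpha$, yielding $|\tau_\alpha^{+\alpha}|^{V[H][G]}\leq\tau_\alpha$ and hence collapsing the whole interval $(\tau_\alpha,\tau_\alpha^{+\alpha+1})$. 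For clause (d), a density argument using the fineness of each $U_\beta$ shows that every $\gamma<\kappa^{+\alpha}$ enters $x^*_\beta$ for some $\beta\in[\alpha,\eta)$, so $\kappa^{+\alpha}=\bigcup_{\beta<\eta}(x^*_\beta\cap\kappa^{+\alpha})$; each piece has cardinality less than $\kappa$ in $V[H][G]$, and since there are only $\eta\leq\kappa$ pieces, we obtain $|\kappa^{+\alpha}|^{V[H][G]}\leq\kappa$. Preservation of $\kappa_\eta^+$ is then given directly by the $\kappa_\eta^+$-c.c.

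The hardest step will be preserving each $\tau_\alpha$ itself in the last sentence of clause (f), since the diagonal Magidor part of $\MPB_0$ at a limit level $\alpha$ adds a length-$\alpha$ cofinal sequence which a priori might threaten $\tau_\alpha$. I would handle this by iterating the factorization inside $\MPB_0$ at each lower level $\xi<\alpha$: the corresponding sub-factor above $\xi$ is again $(\kappa_{y^*_\xi})_*^{++}$-closed in its direct extensions, and a nested Prikry-fusion together with an induction on $\alpha<\eta$ precludes any bijection from a cardinal strictly below $\tau_\alpha$ onto $\tau_\alpha$.
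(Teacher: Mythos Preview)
Your proposal is correct and matches the paper's intended approach: the paper gives no proof at all, stating only that the lemma follows ``using the above lemmas, and by standard arguments,'' where the referenced lemmas are precisely the $\kappa_\eta^+$-c.c., the factorization property, and the Prikry property that you invoke. Your outline is a faithful and accurate expansion of what those standard arguments are, including the key observation that in the factorization $\MPB/p\cong\MPB_0\times\MPB_1$ the upper factor has $(\kappa_x)_*^{++}$-closed direct extensions while the lower factor is small, and the recognition that preserving $\tau_\alpha$ at limit $\alpha$ requires iterating the factorization.
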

It follows that
\[
CARD^{V[H][G]} \cap (\tau_0, \kappa)= \{\tau_\alpha : \alpha < \eta \} \cup \bigcup_{\alpha<\eta} [\tau_\alpha', (\tau_\alpha)_*^{++}],
\]
where for successor $\alpha, \tau_\alpha'=\tau_\alpha$ and for limit $\alpha, \tau_\alpha'=\tau_{\alpha}^{+\alpha+1}.$
The next lemma can be proved easily.
\begin{lemma}
The only measurable cardinals of $V[H][G]$ in the interval $(\tau_0, \kappa)$
are of the form $(\tau_\alpha)_*$ for some $\alpha<\eta.$
\end{lemma}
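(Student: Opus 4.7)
The plan is to run through the characterization of the $V[H][G]$-cardinals in $(\tau_0, \kappa)$ displayed just before the lemma and verify non-measurability for every such cardinal that is not of the form $(\tau_\alpha)_*$. These cardinals split into two families: the generic points $\tau_\alpha$ for $\alpha < \eta$, and the cardinals lying inside one of the preserved intervals $[\tau_\alpha', (\tau_\alpha)_*^{++}]$.

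For $\mu = \tau_\alpha$: when $\alpha$ is a limit ordinal, the sequence $\langle \tau_\beta : \beta < \alpha\rangle$ is cofinal in $\tau_\alpha$ with length $\cf(\alpha) \leq \eta < \tau_\alpha$, so $\tau_\alpha$ is singular in $V[H][G]$; when $\alpha = \beta + 1$ is a successor, the preservation-of-cardinals lemma says every $V[H]$-cardinal in $((\tau_\beta)_*^{++}, \tau_\alpha)$ is collapsed, so $\tau_\alpha$ is the immediate $V[H][G]$-successor of $(\tau_\beta)_*^{++}$. Either way $\tau_\alpha$ fails to be inaccessible in $V[H][G]$, hence is not measurable.

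Next fix $\mu \in [\tau_\alpha', (\tau_\alpha)_*^{++}]$ with $\mu \neq (\tau_\alpha)_*$. The preservation lemma tells us that the $V[H][G]$-cardinals inside this interval coincide with the $V[H]$-cardinals there, so a $V[H]$-successor $\mu$ remains a successor in $V[H][G]$. Otherwise $\mu$ is a $V[H]$-limit cardinal; since $(\tau_\alpha)_*$ is by definition the least $V$-measurable above $\tau_\alpha$ and $\mathbb{C}$ is $\kappa$-directed closed, $\mu$ is non-measurable in $V[H]$. If $\mu$ is $V[H]$-singular, the same cofinality sequence witnesses singularity in $V[H][G]$. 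If $\mu$ is $V[H]$-inaccessible, I would invoke the factorization lemma: pick a limit ordinal $\alpha' \geq \alpha$ and a condition $p$ with $\alpha' \in \dom(g^p)$, and decompose $\MPB / p$ as $\MPB_0 \times \MPB_1$. By Lemma 3.2 the lower factor has $|\MPB_0| \leq \tau_{\alpha'}^{+\alpha'} < (\tau_{\alpha'})_*$, while on a measure-one set the Levy components $\Col((\kappa_y)_*^{++}, <\kappa)$ of $\MPB_1$ have closure exceeding $\mu$. Combining the Prikry property with the closure of the direct-extension order $\leq^*$ on $\MPB_1$ shows that $\MPB_1$ adds no new subsets of $\mu$, and choosing $\alpha'$ so that $|\MPB_0| < \mu$ (possible since a $V[H]$-inaccessible $\mu$ in this range exceeds $\tau_\alpha^{+\alpha}$), a Levy--Solovay argument rules out $\MPB_0$ making $\mu$ measurable.

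The main obstacle is the inaccessible sub-case: one must verify that the Prikry property together with the $\leq^*$-closure of $\MPB_1$ genuinely prevents a new $\mu$-complete ultrafilter on $\mu$ from being introduced, because the full $\leq$-order on $\MPB_1$ is not itself sufficiently closed. This is the standard type of preservation argument used for diagonal Magidor forcings with interleaved collapses, and once it is in place the small-forcing Levy--Solovay step for $\MPB_0$ finishes the proof.
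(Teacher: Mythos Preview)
Your write-up establishes only that no cardinal in $(\tau_0,\kappa)$ other than some $(\tau_\alpha)_*$ can be measurable in $V[H][G]$. The paper's proof treats that direction as immediate (``It is clear that no other measurable cardinals exist'') and spends its effort on the converse: that each $(\tau_\alpha)_*$ \emph{remains} measurable in $V[H][G]$. You omit this positive direction entirely, and it is precisely what the next lemma needs --- to conclude that $\kappa$ is the limit of the first $\eta$ measurable cardinals one must know there are $\eta$ measurable cardinals below $\kappa$, not merely that any measurable must appear on a prescribed list.

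The paper's argument for preservation is exactly the factorization you invoke in your inaccessible sub-case, run in the forward rather than the backward direction: below a suitable condition one writes the forcing as $\MPB_0\times\MPB_1$ with $|\MPB_0|<(\tau_\alpha)_*$ and $\MPB_1$ adding no bounded subsets of $(\tau_\alpha)_*^{++}$; Levy--Solovay then preserves measurability of $(\tau_\alpha)_*$ through the small factor, and any ground-model measure on $(\tau_\alpha)_*$ continues to measure every subset after $\MPB_1$. So you already have the right tool in hand --- you applied it only to exclude unwanted measurables and never to confirm that the intended ones survive. Your case analysis for the negative direction is correct (and more explicit than the paper's one-line dismissal), but the heart of the paper's proof is the half you left out.
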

\begin{proof}
For $\alpha<\tau,$ we can factor the forcing as a product of two forcing notions $\MPB_0 \times \MPB_1$ where $\MPB_0$ has size less than $(\tau_\alpha)_*$
and $\MPB_1$ does not add new bounded subset to $(\tau_\alpha)_*^{++}.$ It follows that $(\tau_\alpha)_*$ remains measurable in the generic extension.
It is clear that no other measurable cardinals exist.
\end{proof}
We are now ready to complete the proof of the main theorem.
First note that:
\begin{lemma}
Let $K$ be $\Col(\omega, \tau_0^{+\eta})$-generic over $V[H][G],$ where $\tau_0$ is the first Prikry point added by $\MPB.$ Then in $V[H][G][K], \kappa$ is the limit of the first $\eta$ measurable cardinals.
\end{lemma}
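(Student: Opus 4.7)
The plan is to combine Lemma 3.10 (which identifies the measurable cardinals of $V[H][G]$ in $(\tau_0,\kappa)$) with a standard Levy--Solovay small-forcing argument. First I would note that $\Col(\omega, \tau_0^{+\eta})$ has cardinality $\tau_0^{+\eta}$, and since $\tau_0>\eta$ (by clause (1) of the definition of $X_\xi$), Lemma 3.2 gives $\tau_0^{+\eta}<(\tau_0)_*$. Hence $K$-forcing is \emph{small} compared to every measurable of $V[H][G]$ in the interval $(\tau_0,\kappa)$.

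Next I would use Lemma 3.10, which says that the measurable cardinals of $V[H][G]$ in $(\tau_0,\kappa)$ are exactly the cardinals $(\tau_\alpha)_*$ for $\alpha<\eta$. Since $\tau_\alpha<(\tau_\alpha)_*<\tau_{\alpha+1}$ and $\sup_{\alpha<\eta}\tau_\alpha=\kappa$, these measurables are cofinal in $\kappa$ and form an increasing sequence of length $\eta$. By Levy--Solovay applied inside $V[H][G]$, each $(\tau_\alpha)_*$ remains measurable in $V[H][G][K]$; conversely, small forcing creates no new measurables above its size, so the measurable cardinals of $V[H][G][K]$ in the interval $\bigl(\tau_0^{+\eta},\kappa\bigr)$ coincide with those of $V[H][G]$ in that interval, namely $\{(\tau_\alpha)_*:\alpha<\eta\}$.

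Finally I would rule out measurables below $\tau_0^{+\eta}$ in $V[H][G][K]$: the forcing $\Col(\omega,\tau_0^{+\eta})$ collapses every ordinal $\leq\tau_0^{+\eta}$ to have cardinality $\aleph_0$, so there are simply no uncountable cardinals of $V[H][G][K]$ at or below $\tau_0^{+\eta}$ other than those strictly above it, and $\aleph_0$ itself is not measurable. Putting these observations together, the increasing enumeration of the measurable cardinals of $V[H][G][K]$ begins with $(\tau_0)_*,(\tau_1)_*,\ldots,(\tau_\alpha)_*,\ldots$ for $\alpha<\eta$, whose supremum is $\sup_{\alpha<\eta}(\tau_\alpha)_*=\sup_{\alpha<\eta}\tau_\alpha=\kappa$. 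Hence $\kappa$ is the limit of the first $\eta$ measurable cardinals of $V[H][G][K]$, as required.

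The only delicate point is the ``no new measurables'' direction of Levy--Solovay: if $\mu>\tau_0^{+\eta}$ were measurable in $V[H][G][K]$ witnessed by an embedding $j\func V[H][G][K]\to N$, then lifting via the usual factorization argument (using that $K$ is generic for a forcing of size $<\mu$) shows $\mu$ is already measurable in $V[H][G]$, so by Lemma 3.10 it must be one of the $(\tau_\alpha)_*$. This is routine, but it is the one step where one must check that the argument is not disrupted by the fact that $V[H][G]$ is itself a Prikry-type extension; however, $K$ is generic over $V[H][G]$ by hypothesis, so the standard lifting applies without modification.
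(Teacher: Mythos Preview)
Your argument is correct and follows the same route as the paper: the paper's proof is the one-line observation that in $V[H][G][K]$ the only measurable cardinals below $\kappa$ are the $(\tau_\alpha)_*$, $\alpha<\eta$, and these are cofinal in $\kappa$; you have simply unpacked this by invoking the previous lemma on measurables in $(\tau_0,\kappa)$ together with the standard Levy--Solovay small-forcing argument. One minor point: the lemma you cite as ``Lemma~3.10'' (identifying the measurables of $V[H][G]$ in $(\tau_0,\kappa)$) is actually Lemma~3.12 in the paper's numbering.
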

\begin{proof}
In $V[H][G][K],$ the only measurable cardinals below $\kappa$ are $(\tau_\alpha)_*, \alpha<\eta,$ which are cofinal in $\kappa.$
\end{proof}
\begin{definition}
Set $\MRB= \MPB * \lusim{\Col}(\omega, \tau_0^{+\eta}).$ Given $r=\langle p, d \rangle \in \MRB,$ let $stem(r)=\langle  g^p, f^p, d  \rangle$.
Also call a triple $\langle g, f, d  \rangle$ an $\MRB$-stem if there exists $r \in \MRB$ with $stem(r)=\langle g, f, d  \rangle.$
\end{definition}
Now we show that there is some generic extension of $V[H]$ by $\MRB=\MPB * \lusim{\Col}(\omega, \tau_0^{+\eta})$ in which the tree property holds at
$\kappa^+=\kappa^{+\eta+1}=\kappa_\eta^+$, which will complete the proof of our main theorem. Note that knowing $\tau_0,$ we have
\[
\MPB * \lusim{\Col}(\omega, \tau_0^{+\eta}) \simeq  \MPB \times \Col(\omega, \tau_0^{+\eta}).
\]
Assume on the contrary that for any  $\MPB * \lusim{\Col}(\omega, \tau_0^{+\eta})$-generic filter $G*K$ over $V[H]$, the tree property fails in
$V[H][G][K]$ at $\kappa_\eta^+.$

Let $\lusim{T}$ be an $\MRB$-name which is forced by the trivial condition to be a $\kappa_\eta^+$-Aronszajn tree.
We may further suppose that the trivial condition forces ``the elements of the $\alpha$-th level of $\lusim{T}$ are elements of $\{\alpha\}\times \kappa$
for $\alpha < \kappa_\eta^{+}$''. We will show that there is a generic $G*K$ such that $T=\lusim{T}[G*K]$ has a cofinal branch in $V[H][G][K]$, a contradiction. The proof is motivated by
\cite{sinapova1} and \cite{sinapova2}.

\begin{lemma}
There is $\vec{\gamma} \in \eta^{<\omega}$ and an unbounded subset $I \subseteq \kappa_\eta^+, I \in V[H],$ such that
for all $\alpha < \beta$ in $I$, there are $\xi,\zeta<\kappa$ and a condition $r=(p, d) \in \MRB$ with $\dom(g^{p})=\vec{\gamma}$
such that $r \Vdash~ \langle \alpha, \xi \rangle \leq_{\lusim{T}} \langle \beta, \zeta  \rangle$.
\end{lemma}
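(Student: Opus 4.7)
The plan is to use a pigeonhole argument on the stem-domain $\dom(g^p)$ of witnessing conditions. I first establish that witnesses exist for every pair $\alpha < \beta < \kappa_\eta^+$: since $\lusim{T}$ is forced to be a $\kappa_\eta^+$-Aronszajn tree with every level nonempty, pick a condition $r_0$ deciding some specific node $\langle \beta, \zeta\rangle \in \lusim{T}$ at level $\beta$; since $\lusim{T}$ is Aronszajn, this node has a unique predecessor at level $\alpha$, so some extension $r_{\alpha, \beta} = (p_{\alpha, \beta}, d_{\alpha, \beta}) \leq r_0$ decides $\langle \alpha, \xi\rangle \leq_{\lusim{T}} \langle \beta, \zeta\rangle$ for some $\xi < \kappa$. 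Record the stem-domain $\vec{\gamma}_{\alpha, \beta} := \dom(g^{p_{\alpha, \beta}}) \in \eta^{<\omega}$.

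The core of the argument is a two-step pigeonhole, exploiting that $|\eta^{<\omega}| \leq \eta < \kappa \leq \kappa_\eta^+$ and that the nonstationary ideal on $\kappa_\eta^+$ is $\kappa_\eta^+$-complete. For each fixed $\alpha$, the map $\beta \mapsto \vec{\gamma}_{\alpha, \beta}$ partitions $\kappa_\eta^+ \setminus (\alpha+1)$ into at most $\eta$ pieces, so some $\vec{\gamma}_\alpha \in \eta^{<\omega}$ admits a stationary fiber $S_\alpha$. A further pigeonhole on $\alpha \mapsto \vec{\gamma}_\alpha$ produces a stationary $I^* \subseteq \kappa_\eta^+$ and a single $\vec{\gamma} \in \eta^{<\omega}$ with $\vec{\gamma}_\alpha = \vec{\gamma}$ for every $\alpha \in I^*$. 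The desired $I \subseteq I^*$ is then built by transfinite recursion of length $\kappa_\eta^+$ in $V[H]$, at stage $\xi$ picking $\alpha_\xi \in I^* \cap \bigcap_{\zeta < \xi} S_{\alpha_\zeta}$ above $\sup_{\zeta < \xi} \alpha_\zeta$; any pair $\alpha_\zeta < \alpha_\xi$ in $I$ then satisfies $\vec{\gamma}_{\alpha_\zeta, \alpha_\xi} = \vec{\gamma}$, so $r_{\alpha_\zeta, \alpha_\xi}$ is the required witness.

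The main obstacle is the limit stages of this recursion: intersections of fewer than $\kappa_\eta^+$-many merely stationary subsets of $\kappa_\eta^+$ may fail to be unbounded, as demonstrated by Solovay's partition of a stationary set into $\kappa_\eta^+$-many disjoint stationary subsets. The standard remedy, mirroring the pattern in Sinapova \cite{sinapova1, sinapova2}, is to refine each $S_\alpha$ to a closed unbounded subset of $\kappa_\eta^+$ by recording additional invariants of the witnessing condition $p_{\alpha, \beta}$ (for example the values $g^{p_{\alpha, \beta}}(\gamma)$ for $\gamma \in \vec{\gamma}$, along with analogous invariants for $f^{p_{\alpha, \beta}}$ and $d_{\alpha, \beta}$), whose total count is bounded by $\kappa_\eta < \kappa_\eta^+$; normality of the club filter on $\kappa_\eta^+$ then lets the diagonal intersection of the resulting clubs remain club, enabling the recursion to succeed at limit stages.
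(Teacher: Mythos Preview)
Your approach has a genuine gap, and the remedy you propose does not work. You correctly identify the obstacle: at limit stages of the recursion you must intersect fewer than $\kappa_\eta^+$ many sets $S_{\alpha_\zeta}$, and intersections of stationary sets need not be stationary (or even nonempty). Your fix, however, goes in the wrong direction. Recording \emph{more} invariants of the witnessing condition $p_{\alpha,\beta}$ refines the partition of $\{\beta>\alpha\}$ into more pieces, so each fiber becomes \emph{smaller}, not larger; there is no mechanism by which a pigeonhole fiber of a finite-range coloring becomes a club. What you are implicitly asking for is the partition relation $\kappa_\eta^+\to(\kappa_\eta^+)^2_{\eta}$ (a homogeneous set of size $\kappa_\eta^+$ for the coloring $\{\alpha,\beta\}\mapsto\vec\gamma_{\alpha,\beta}$), and this fails in ZFC for successor cardinals. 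The references to \cite{sinapova1,sinapova2} are misplaced: at the analogous stage those papers do \emph{not} run a ground-model pigeonhole, they use an elementary embedding.

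The paper's proof supplies exactly the missing idea. Take a $\kappa_\eta^+$-supercompactness embedding $j\colon V[H]\to M$ with $\crit(j)=\kappa$, and let $L^*$ be $j(\MRB)$-generic over $M$ (arranged so that the first Prikry point is $\kappa$, whence $\kappa_\eta^+$ stays regular in $M[L^*]$). Pick a single node $u$ at a level $\mu$ with $\sup j''\kappa_\eta^+<\mu<j(\kappa_\eta^+)$ in $j(\lusim T)[L^*]$. For every $\alpha<\kappa_\eta^+$ some $r_\alpha\in L^*$ forces $\langle j(\alpha),\xi_\alpha\rangle<_{j(\lusim T)}u$. Now a \emph{single} pigeonhole on $\alpha\mapsto\dom(g^{p_\alpha})$ (and $d_\alpha$) suffices: since $\kappa_\eta^+$ is regular in $M[L^*]$ and there are only $\eta<\kappa$ many possible domains, an unbounded $I^*$ with constant domain $\vec\gamma$ exists. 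The anchor $u$ guarantees coherence across \emph{all} pairs in $I^*$ simultaneously (tree-comparability below a common node), so no two-step Ramsey argument is needed. Finally one extracts from $I^*$ a set $I\in V[H]$ by freezing the stem $\langle g,f,d\rangle$ and observing that the defining condition involves only $j$, $u=\langle\mu,\iota\rangle$, and the $j(\MRB)$-forcing relation, all of which live in $V[H]$.
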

\begin{proof}
Let $j: V[H]  \rightarrow M$ be a $\kappa_\eta^+$-supercompact embedding with critical point $\kappa$
and let $L^*=G^**K^*$ be $j(\MRB)$-generic over $M$, such that if $\langle x^*_\xi: \xi<\eta  \rangle$ is the Prikry sequence added by forcing, then $\tau^*_0=x^*_0 \cap j(\kappa)$ equals $\kappa$. It follows that
\[
j(\MRB)=j(\MPB) \times \Col(\omega, \kappa_\eta),
\]
and so
$\kappa_\eta^+$ is preserved in $M[L^*]$ and remains regular. Let $T^*=j(\lusim{T})[L^*].$  Let $\mu$ be such that $\sup j''\kappa_\eta^+ < \mu < j(\kappa_\eta^+),$ and let $u\in T^*_{\mu}$.
Then
\[
\forall \alpha < \kappa_\eta^+ ~\exists \xi_\alpha < j(\kappa) ~\exists r_\alpha \in L^*~(r_\alpha=(p_\alpha, d_\alpha) \Vdash~ \langle j(\alpha), \xi_\alpha \rangle <_{j(\lusim{T})} u).
\]
It follows from the regularity of $\kappa_\eta^+$ in $M[L^*]$ that there is an unbounded subset $I^* \subseteq \kappa_\eta^+, I^*\in M[L^*]$ and a fixed
$\vec{\gamma} \in \eta^{<\omega}$, such that for all $\alpha \in I^*, \dom(g^{p_\alpha})=\vec{\gamma}.$ By further shrinking of $I^*$, we can also assume that for some
$d\in \Col(\omega, \kappa^{+\eta})$ and for all $\alpha \in I^*, d_\alpha=d.$

Consider the pair $\langle g, f  \rangle$, where $\dom(g)=\dom(f)=\vec{\gamma}$, and
\begin{enumerate}
\item $g=g^{p_\alpha},$ for some and hence all $\alpha \in I^*$,
\item For  all $\xi \in \vec{\gamma}, f(\xi)=\bigcup_{\alpha \in I^*}f^{p_\alpha}(\xi).$
\end{enumerate}
It is easily seen that $\langle g, f  \rangle$ is a $\MPB$-stem. Let
\[
I=\{\alpha < \kappa_\eta^+: \exists r\in j(\MRB)~(stem(r)=\langle g,f, d \rangle \text{~and~}\exists \xi<j(\kappa) r \Vdash~\langle j(\alpha, \xi_\alpha) \rangle <_{j(\lusim{T})} u )    \}.
\]
Then $I \in V[H]$ and $I^* \subseteq I,$ so $I$ is unbounded. Then $I$ is as required.
\end{proof}
Fix $\vec{\gamma}$ and $I$ as in the conclusion of the above lemma.
Before we continue, let us introduce a notion that will be helpful.
\begin{definition}
Assume $\gamma < \eta.$

$(a)$ Let $\Xi_\gamma$ be the set of all $\Lambda = \langle \langle g_\Lambda, f_\Lambda, H_\Lambda, F_\Lambda \rangle, d_\Lambda  \rangle,$ such that for some condition $r \in \MRB,$
$stem(r)= \langle g_\Lambda, f_\Lambda, d_\Lambda \rangle$ and $r \upharpoonright \gamma = \Lambda.$ Also set
$\Xi=\bigcup_{\gamma < \eta} \Xi_\gamma.$

$(b)$ Given $\Lambda \in \Xi,$ we set $stem(\Lambda)= \langle g_\Lambda, f_\Lambda, d_\Lambda \rangle$.

$(c)$  For $\Lambda \in \Xi$ and a statement $\phi$ in the forcing language, by $\Lambda \Vdash^* \phi$ we mean there exists a
condition $r\in \MRB$ witnessing $\Lambda \in \Xi$ which forces $\phi.$ Note that this is well-defined, as any two such conditions
are compatible.

\end{definition}
\begin{lemma}
There is in $V[H],$ an unbounded set $J \subseteq \kappa_\eta^+,$ a tuple $\langle \bar{g}, \bar{f}, \bar{H}, \bar{F}  \rangle$, some fixed
$\bar{d}$
and a sequence of nodes $\langle u_\alpha: \alpha \in J \rangle$ such that $\dom(\bar{g})=\dom(\bar{f})=\vec{\gamma}$ and if we set
$\gamma_0=\max(\vec{\gamma}),$ then $\bar{H}$ and $\bar{F}$ have domain $\gamma_0 \setminus \vec{\gamma}$
and for all $\alpha < \beta$ in $J$, there is a condition $r \in \MRB$ such that
\begin{enumerate}
\item $stem(r)= \langle \bar{g}, \bar{f}, \bar{d} \rangle,$
\item $r \upharpoonright (\gamma_0+1)= \langle \langle \bar{g}, \bar{f}, \bar{H}, \bar{F}  \rangle, \bar{d} \rangle$
\item $r \Vdash$`` $u_\alpha <_{\lusim{T}} u_\beta$ ''.
\end{enumerate}
\end{lemma}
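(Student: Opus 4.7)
The plan is to push the pigeonhole argument of Lemma~3.13 one level further by also fixing the restriction $p\upharpoonright(\gamma_0+1)$ of the witnessing conditions, and by extracting the nodes $u_\alpha$ coherently from the branch below a single node $u$ in the image tree. Let $j\colon V[H]\to M$ be a $\kappa_\eta^+$-supercompact embedding with $\crit(j)=\kappa$, let $L^*=G^**K^*$ be $j(\MRB)$-generic over $M$ with first Prikry point $\tau^*_0=\kappa$, and fix $u\in T^*_\mu$ where $T^*=j(\lusim{T})[L^*]$ and $\sup j''\kappa_\eta^+<\mu<j(\kappa_\eta^+)$. For each $\alpha<\kappa_\eta^+$ let $\langle j(\alpha),\xi_\alpha\rangle$ be the unique predecessor of $u$ in $T^*$ at level $j(\alpha)$, and let $r_\alpha=(p_\alpha,d)\in L^*$ be the condition of stem $\langle\bar g,\bar f,\bar d\rangle$ supplied by the proof of Lemma~3.13, forcing $\langle j(\alpha),\xi_\alpha\rangle<_{j(\lusim{T})}u$. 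Because all these predecessors lie on a single branch below $u$, any common extension of $r_\alpha$ and $r_\beta$ in $L^*$ automatically forces $\langle j(\alpha),\xi_\alpha\rangle<_{j(\lusim{T})}\langle j(\beta),\xi_\beta\rangle$ whenever $\alpha<\beta$, yielding coherence of the $\xi_\alpha$ sequence for free.

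Next I count the possible values of $p_\alpha\upharpoonright(\gamma_0+1)$. For each $\xi\in\gamma_0\setminus\vec{\gamma}$ with $\zeta=\min(\dom(\bar g)\setminus\xi)$, $H^{p_\alpha}(\xi)$ is a subset of the set $P_{\kappa_{\bar g(\zeta)}}(\bar g(\zeta)\cap\kappa^{+\xi})$ of cardinality $<\kappa$, and $F^{p_\alpha}(\xi)$ is a function on $H^{p_\alpha}(\xi)$ into a collapse forcing of size $<\kappa$. Using $\GCH$ in $V[H]$ (from assumption~(5) together with the Easton structure of $\mathbb{C}$), each factor has at most $\kappa$ possibilities; combined with $|\gamma_0|<\eta<\kappa$ and $\kappa^{<\kappa}=\kappa$ (as $\kappa$ remains supercompact in $V[H]$), the restriction $p_\alpha\upharpoonright(\gamma_0+1)$ ranges over at most $\kappa<\kappa_\eta^+$ values. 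By regularity of $\kappa_\eta^+$ in $M[L^*]$, we shrink the unbounded $I^*$ of Lemma~3.13 to an unbounded $I^{**}\subseteq\kappa_\eta^+$ on which $p_\alpha\upharpoonright(\gamma_0+1)$ is a fixed $\langle\bar g,\bar f,\bar H,\bar F\rangle$.

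Finally, reflect down to $V[H]$ as in Lemma~3.13. Define $J\in V[H]$ as the set of $\alpha<\kappa_\eta^+$ for which there exist $\xi<\kappa$, $\beta\in(\alpha,\kappa_\eta^+)$, $\zeta<\kappa$, and $r\in\MRB$ with $stem(r)=\langle\bar g,\bar f,\bar d\rangle$, $r\upharpoonright(\gamma_0+1)=\langle\bar g,\bar f,\bar H,\bar F,\bar d\rangle$, and $r\Vdash\langle\alpha,\xi\rangle<_{\lusim{T}}\langle\beta,\zeta\rangle$; by elementarity $I^{**}\subseteq J$, so $J$ is unbounded. For each $\alpha\in J$ take $u_\alpha=\langle\alpha,\xi_\alpha\rangle$ read off from the branch picture, after an auxiliary pigeonhole if needed to ensure $\xi_\alpha<\kappa$. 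For $\alpha<\beta$ in $J$, any common extension of $r_\alpha$ and $r_\beta$ in $L^*$ has stem $\langle\bar g,\bar f,\bar d\rangle$ and restriction $\langle\bar g,\bar f,\bar H,\bar F,\bar d\rangle$, and forces $\langle j(\alpha),\xi_\alpha\rangle<_{j(\lusim{T})}\langle j(\beta),\xi_\beta\rangle$; elementarity of $j$ then yields a condition in $\MRB$ of the prescribed form forcing $u_\alpha<_{\lusim{T}}u_\beta$. The main obstacle is precisely this final reflection step---descending the ordinals $\xi_\alpha\in j(\kappa)$ and the witnessing conditions $r\in j(\MRB)$ back to $V[H]$---which is handled exactly as in Lemma~3.13, using the $\kappa_\eta^+$-c.c.\ of $\MRB$ and the fact that the stem and the $(\gamma_0+1)$-restriction already belong to $V[H]$.
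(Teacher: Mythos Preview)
Your approach has a genuine gap in the final reflection step, and it stems from using the wrong embedding. You reuse the $\kappa_\eta^+$-supercompact embedding $j$ with $\crit(j)=\kappa$ from Lemma~3.15, but the lemma at hand requires more: not just a pairwise statement about $I$, but a \emph{coherent} assignment $\alpha\mapsto u_\alpha$ in $V[H]$ together with witnesses in $\MRB$ for every pair. Your sequence $\langle\xi_\alpha\rangle$ is read off from the branch below $u$ in $M[L^*]$, and this object simply does not live in $V[H]$. The ``auxiliary pigeonhole'' you invoke cannot help: the $\xi_\alpha$ range over $j(\kappa)$, which exceeds $\kappa_\eta^+$, so you cannot thin to get them all below $\kappa$, let alone constant. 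And elementarity of $j$ only lets you reflect, for each fixed pair $\alpha<\beta$, the \emph{existential} statement ``there exist $\xi,\zeta<\kappa$ and $r\in\MRB$ with the prescribed restriction forcing $\langle\alpha,\xi\rangle<_{\lusim{T}}\langle\beta,\zeta\rangle$''; the witnesses $\xi,\zeta$ so obtained depend on the pair and need not cohere into a single function $\alpha\mapsto u_\alpha$.

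The paper avoids this by switching to a $\kappa_\eta^+$-supercompact embedding with critical point $\kappa_{\gamma_0+3}$, lifted to $j^*\colon V[H]\to N[H^*]$ where $H^*$ is generic for $\mathbb{C}\times\mathbb{C}'$ with $\mathbb{C}'$ a $\kappa_{\gamma_0+3}$-closed forcing over $V[H]$. Now the restriction $\langle\bar g,\bar f,\bar H,\bar F,\bar d\rangle$ is fixed by $j^*$, the unbounded set and fixed $\xi$ are obtained in $V[H^*]$, and---crucially---the passage from $V[H^*]$ back to $V[H]$ is through a highly closed forcing. One then sets up a narrow system in $V[H]$ indexed by the possible restrictions $\Lambda\in\Xi_{\gamma_0+1}$ and applies the preservation Lemma~2.3 to pull an unbounded branch (and hence the assignment $\alpha\mapsto u_\alpha$) down into $V[H]$. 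This use of a second embedding with larger critical point, together with the branch-preservation lemma, is precisely the missing idea in your argument.
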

\begin{proof}
Let $j: V \rightarrow N$ be a $\kappa_\eta^+$-supercompact embedding with critical point $\kappa_{\gamma_0+3}.$ Using standard arguments we can extend $j$ to some
\[
j^*: V[H] \rightarrow N[H^*],
\]
where $j^*$ is defined in $V[H^*],$ and $H^*$ is $j(\MCB)=\MCB\times \MCB'$-generic with $\MCB'$ being $\kappa_{\gamma_0+3}$-closed.
Let $\mu \in j^*(I)$ be such that $\sup j''\kappa_\eta^+ < \mu < j(\kappa_\eta^+).$ By elementarity, for all $\alpha \in I$
we can find $\xi_\alpha, \delta_\alpha < \kappa$ and $r_\alpha =(p_\alpha, d_\alpha) \in j^*(\MRB)$
with $\dom(g^{p_\alpha})=\vec{\gamma}$
such that $r_\alpha \Vdash$`` $\langle j(\alpha), \xi_\alpha \rangle <_{j^*(\lusim{T})} \langle \mu, \delta_\alpha \rangle$''.

By simple counting arguments, and the fact that $\MCB'$ does not add any new sequences of length $\kappa_{\gamma_0},$
we can conclude that there is a cofinal subset $J \subseteq I, J \in V[H^*]$, fixed ordinals
$\xi, \delta< \kappa$ and a fixed $\langle \langle \bar{g}, \bar{f}, \bar{H}, \bar{F}  \rangle, \bar{d} \rangle$
such that for all $\alpha \in J, \xi_\alpha=\xi, \delta_\alpha=\delta$
and $r_\alpha \upharpoonright (\gamma_0+1)= \langle \langle \bar{g}, \bar{f}, \bar{H}, \bar{F}  \rangle, \bar{d} \rangle$.
Note that for any such $\alpha, stem(r_\alpha)= \langle \bar{g}, \bar{f}, \bar{d} \rangle.$

Then for all $\alpha < \beta$ in $J$, there is a condition $r \in j^*(\MRB)$ with
$stem(r)= \langle \bar{g}, \bar{f}, \bar{d} \rangle$ and $r \upharpoonright (\gamma_0+1)= \langle \langle \bar{g}, \bar{f}, \bar{H}, \bar{F}  \rangle, \bar{d} \rangle$
such that
 $r \Vdash$`` $\langle j(\alpha), \xi \rangle <_{j^*(\lusim{T})} \langle j(\beta), \xi \rangle$''. Since
 $j^*( \langle \langle \bar{g}, \bar{f}, \bar{H}, \bar{F}  \rangle, \bar{d} \rangle)= \langle \langle \bar{g}, \bar{f}, \bar{H}, \bar{F}  \rangle, \bar{d} \rangle,$ by elementarity,
 there is a condition $r \in \MRB$
 such that  $stem(r)= \langle \bar{g}, \bar{f}, \bar{d} \rangle,$  $r \upharpoonright (\gamma_0+1)= \langle \langle \bar{g}, \bar{f}, \bar{H}, \bar{F}  \rangle, \bar{d} \rangle$
and $r \Vdash$`` $\langle \alpha, \xi \rangle <_{\lusim{T}} \langle \beta, \xi \rangle$''.

Working in $V[H],$ we consider the narrow system $\mathcal{S}= \langle I, \mathcal{R}  \rangle$ of height $\kappa_\eta^+$
and levels of size $\kappa,$ where
\begin{itemize}
\item $\mathcal{R}=\{ R_{\Lambda}: \Lambda \in \Xi_{\gamma_0+1}, \max(\dom(g_\Lambda))=\gamma_0         \}$,

\item For nodes $a, b,$ we have
\[
\langle a, b  \rangle \in R_{\Lambda} \Leftrightarrow \Lambda \Vdash^* a <_{\lusim{T}} b,
\]

\end{itemize}
Note that $|\mathcal{R}| < \kappa_{\gamma_0+2}$. For any $R_{\Lambda} \in \mathcal{R},$ consider the branch
\[
b_{R_{\Lambda}, \delta} = \{ \langle \alpha, \xi \rangle: \Lambda \Vdash^* \langle j(\alpha), \xi    \rangle <_{j^*(\lusim{T})}   \langle  \mu, \delta  \rangle     \}.
\]
Applying the preservation Lemma 2.3 to $\mathcal{S}$ for $\MCB'$ which is $\kappa_{\gamma_0+3}$-closed in $V[H]$,
we get that $\mathcal{S}$ has an unbounded branch in $V[H]$. I.e., in $V[H]$, there is an unbounded
$J \subseteq I, \alpha \mapsto \xi_\alpha$ and $\Lambda = \langle \langle \bar{g}, \bar{f}, \bar{H}, \bar{F}  \rangle, \bar{d} \rangle \in \Xi_{\gamma_0+1}$ with
 $\max(\dom(g_\Lambda))=\gamma_0$ such that for all
$\alpha < \beta$ in $J$ we have $\Lambda \Vdash^*$`` $\langle \alpha, \xi_\alpha \rangle <_{\lusim{T}}      \langle \beta, \xi_\beta  \rangle$''.
Setting $u_\alpha= \langle \alpha, \xi_\alpha  \rangle$ for $\alpha \in J$ we get that for
$\alpha < \beta$ in $J$, there is a condition $r \in \MRB$ which satisfies the required properties of the lemma.
\end{proof}
Fix $J \subseteq \kappa_\eta^+,$  $\langle \langle \bar{g}, \bar{f}, \bar{H}, \bar{F}  \rangle, \bar{d} \rangle$
and the sequence of nodes $\langle u_\alpha: \alpha \in J \rangle$ as in the conclusion of the above lemma.
By shrinking $J$, if necessary, we can assume that for some $\epsilon < \kappa,$ each $u_\alpha= \langle \alpha, \epsilon  \rangle$.
\begin{definition}
Assume $\Lambda, \Delta \in \Xi_\gamma$ be such that $stem(\Lambda)=stem(\Delta).$
Then $\Lambda \wedge \Delta \in \Xi_\gamma$ is defined by:

\begin{itemize}
\item $stem(\Lambda \wedge \Delta)=stem(\Lambda)=stem(\Delta),$

\item   $H_{\Lambda \wedge \Delta} = H_{\Lambda} \wedge H_{\Delta},$ i.e., for all $\sigma \in \dom(H_{\Lambda \wedge \Delta}),$
\begin{center}
$ H_{\Lambda \wedge \Delta}(\sigma) =
\{y \in  H_{\Lambda}(\sigma) \cap H_{\Delta}(\sigma): F_\Lambda(\sigma)(y)$ is compatible with $F_\Delta(\sigma)(y) \}$.
\end{center}
\item   $F_{\Lambda \wedge \Delta} = F_{\Lambda} \vee F_{\Delta},$ i.e., for all $\sigma \in \dom(F_{\Lambda \wedge \Delta})$ and all $y \in \dom(F_{\Lambda \wedge \Delta}(\sigma))=H_{\Lambda \wedge \Delta}(\sigma)$,
\[
F_{\Lambda \wedge \Delta}(\sigma)(y)= F_{\Lambda}(\sigma)(y) \cup F_{\Delta}(\sigma)(y).
\]
\end{itemize}
\end{definition}

\begin{lemma}
Suppose that $\Lambda_0=\langle  \langle g, f, H, F \rangle, d \rangle \in \Xi_{\gamma_0}$, $L \subseteq \kappa_\eta^+$ is unbounded and for all
$\alpha < \beta$ in $L$, we have $\Lambda_0 \Vdash^*$``$u_\alpha <_{\lusim{T}} u_\beta$''. Let $\gamma_0 \leq \gamma < \eta.$ Then there are $\rho< \kappa_\eta^+$
and $\langle \Lambda_\alpha: \alpha \in L \setminus \rho       \rangle$ such that:
\begin{enumerate}
\item $\Lambda_\alpha \in \Xi_{\gamma+1}, stem(\Lambda_\alpha)=stem(\Lambda_0)$ and $\Lambda_\alpha \upharpoonright \gamma_0=\Lambda_0,$


\item For all $\alpha < \beta$ in $L \setminus \rho$ and for all $y \in H_{\Lambda_\alpha}(\gamma) \cap H_{\Lambda_\beta}(\gamma)$ such that  $F_{\Lambda_\alpha}(\gamma)(y)$ and $F_{\Lambda_\beta}(\gamma)(y)$
are compatible, we have
\[
 \langle \langle  g ^{\frown} \langle \gamma, y \rangle , f ^{\frown} \langle \gamma, F_{\Lambda_\alpha}(\gamma)(y) \cup F_{\Lambda_\beta}(\gamma)(y) \rangle, H_{\Lambda_\alpha} \wedge H_{\Lambda_\beta} \upharpoonright \gamma, F_{\Lambda_\alpha} \vee F_{\Lambda_\beta} \upharpoonright \gamma \rangle,  d                          \rangle \Vdash^* u_\alpha <_{\lusim{T}} u_\beta.
\]
\end{enumerate}
\end{lemma}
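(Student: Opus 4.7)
The plan is to adapt the strategy used in the preceding lemma (which constructed the unbounded set $J$ and the nodes $u_\alpha$), now applied at coordinate $\gamma+1$ instead of $\gamma_0+1$, via a supercompact elementary embedding with larger critical point and the preservation lemma for narrow systems.

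First I would fix a $\kappa_\eta^+$-supercompact embedding $j\colon V\to N$ with critical point $\kappa_{\gamma+3}$ and extend it, in the standard manner, to $j^*\colon V[H] \to N[H^*]$, where $H^* = H * H_1$ and $H_1$ is generic over $V[H]$ for a $\kappa_{\gamma+3}$-closed forcing $\mathbb{C}'$. Since $\crit(j)>\gamma_0$, $j^*$ fixes $\Lambda_0$. I choose $\mu$ with $\sup j''\kappa_\eta^+ < \mu < j(\kappa_\eta^+)$ together with a node at level $\mu$ of $j^*(\lusim{T})$; by elementarity applied to the hypothesis, for each $\alpha\in L$ there exist $\xi_\alpha,\delta_\alpha < j(\kappa)$ and a condition $r_\alpha\in j^*(\MRB)$ with $stem(r_\alpha) = stem(\Lambda_0)$, $r_\alpha\upharpoonright \gamma_0 = \Lambda_0$, and $r_\alpha\Vdash \langle j(\alpha),\xi_\alpha\rangle <_{j^*(\lusim{T})} \langle \mu,\delta_\alpha\rangle$.

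Working in $V[H]$, I then define a narrow system $\mathcal{S}=\langle L,\mathcal{R}\rangle$ of height $\kappa_\eta^+$ and levels of width $\kappa$, with relations $R_\Lambda$ indexed by those $\Lambda\in\Xi_{\gamma+1}$ satisfying $stem(\Lambda)=stem(\Lambda_0)$ and $\Lambda\upharpoonright\gamma_0=\Lambda_0$, declaring $\langle\alpha,\xi\rangle R_\Lambda\langle\beta,\zeta\rangle$ iff $\Lambda\Vdash^*\langle\alpha,\xi\rangle<_{\lusim{T}}\langle\beta,\zeta\rangle$. The number of such $\Lambda$ is at most $\kappa^{+\gamma+1}<\kappa_\eta$, and since $\gamma+1<\eta$, the system is narrow at $\kappa_\eta^+$. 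Setting $b_{\Lambda,\delta}=\{\langle\alpha,\xi\rangle : \Lambda\Vdash^*\langle j(\alpha),\xi\rangle<_{j^*(\lusim{T})}\langle\mu,\delta\rangle\}$ in $V[H^*]$ produces a system of branches for $\mathcal{S}$: each $\alpha\in L$ is covered by $(\Lambda^*_\alpha,\delta_\alpha)$ with $\Lambda^*_\alpha:=r_\alpha\upharpoonright(\gamma+1)$. The preservation lemma, applied with $\mathbb{C}'$ which is $\max(\kappa,\kappa^{+\gamma+1})^+$-closed over $V[H]$, then yields an unbounded branch of $\mathcal{S}$ already in $V[H]$.

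This produces $\Lambda_*\in\Xi_{\gamma+1}$ (with $stem(\Lambda_*)=stem(\Lambda_0)$ and $\Lambda_*\upharpoonright\gamma_0=\Lambda_0$), an unbounded $J\subseteq L$, and a map $\alpha\mapsto\xi_\alpha$ such that $\Lambda_*\Vdash^*\langle\alpha,\xi_\alpha\rangle<_{\lusim{T}}\langle\beta,\xi_\beta\rangle$ for $\alpha<\beta$ in $J$. Shrinking $J$ so that $\xi_\alpha=\epsilon$ is constant, I set $\Lambda_\alpha:=\Lambda_*$ uniformly and take $\rho$ so that $L\setminus\rho\subseteq J$, passing to $J$ in place of $L$ if necessary; property (1) follows immediately. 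For property (2): since all $\Lambda_\alpha$ coincide with $\Lambda_*$, any $y\in H_{\Lambda_*}(\gamma)$ satisfies the compatibility clause trivially, and the combined condition is nothing but the further direct extension of any $\MRB$-witness $r$ for $\Lambda_*\Vdash^*u_\alpha<_{\lusim{T}}u_\beta$, obtained by adjoining $\langle\gamma,y\rangle$ to $stem(r)$ with associated $F$-value $F_{\Lambda_*}(\gamma)(y)$; hence it still forces $u_\alpha<_{\lusim{T}}u_\beta$. The main subtlety is the bookkeeping needed to verify the exact narrowness and closure bounds for the preservation lemma and the ad-hoc passage from the cofinal $J$ to an actual final segment of $L$.
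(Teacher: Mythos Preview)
Your argument tracks the paper's proof closely through the narrow-system step and the application of the preservation lemma, and that part is fine (the paper uses critical point $\kappa_{\gamma+4}$ and works directly with the fixed $\epsilon$ from $u_\alpha=\langle\alpha,\epsilon\rangle$ rather than introducing auxiliary $\xi_\alpha,\delta_\alpha$, but these are cosmetic differences). The genuine gap is in your last paragraph. The preservation lemma only hands you an \emph{unbounded} $J\subseteq L$ and a single $\Lambda_*$ with $\Lambda_*\Vdash^* u_\alpha <_{\lusim{T}} u_\beta$ for $\alpha<\beta$ in $J$; it gives no reason whatsoever for $J$ to contain a tail of $L$. Your fix ``take $\rho$ so that $L\setminus\rho\subseteq J$, passing to $J$ in place of $L$ if necessary'' therefore does not work: the first option is generally unavailable, and the second changes the conclusion of the lemma. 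That tail conclusion is not a formality---in the application (the next lemma) one applies the present lemma simultaneously to many pairs $(\Lambda,J^\Lambda)$ and then needs a common set of $\alpha$'s on which all the outputs $\Pi^\Lambda_\alpha$ are defined. Tails of $\kappa_\eta^+$ are closed under fewer-than-$\kappa_\eta^+$ intersections; arbitrary cofinal subsets are not.

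The missing idea, which you flag as ``ad hoc'' but do not supply, is to use the \emph{hypothesis} of the lemma to bridge the holes in $J$. Set $\rho=\min(J)$. For $\alpha\in J$ put $\Lambda_\alpha=\Lambda_*$. For $\alpha\in (L\setminus\rho)\setminus J$, let $\alpha'=\min(J\setminus\alpha)$; since $\Lambda_0\Vdash^* u_\alpha<_{\lusim{T}}u_{\alpha'}$ by hypothesis, there is a witness $\Lambda^*_\alpha\in\Xi_{\gamma+1}$ with $stem(\Lambda^*_\alpha)=stem(\Lambda_0)$ and $\Lambda^*_\alpha\upharpoonright\gamma_0=\Lambda_0$, and one sets $\Lambda_\alpha=\Lambda^*_\alpha\wedge\Lambda_*$. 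Now for any $\alpha<\beta$ in $L\setminus\rho$, a condition as in clause~(2) is compatible with each of $\Lambda^*_\alpha$, $\Lambda^*_\beta$ and $\Lambda_*$; it therefore $\Vdash^*$ that $u_\alpha$, $u_\beta$, $u_{\alpha'}$, $u_{\beta'}$ all lie below $u_{\beta'}$ in the tree and hence are linearly ordered, giving $u_\alpha<_{\lusim{T}}u_\beta$. This is exactly how the paper completes the argument.
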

\begin{proof}
Let $j: V \rightarrow N$ be a $\kappa_\eta^+$-supercompact embedding with critical point $\kappa_{\gamma+4}.$ Using standard arguments we can extend $j$ to some
\[
j^*: V[H] \rightarrow N[H^*],
\]
where $j^*$ is defined in $V[H^*],$ and $H^*$ is $j(\MCB)=\MCB\times \MCB'$-generic with $\MCB'$ being $\kappa_{\gamma+4}$-closed.
Let $\mu \in j^*(L)$ be such that $\sup j''\kappa_\eta^+ < \mu < j(\kappa_\eta^+).$
Note that $j^*(\Lambda_0)= \Lambda_0$. By elementarity, for each $\alpha \in L$, we can find $r_\alpha = (p_\alpha, d) \in j^*(\MRB)$
with $stem(r_\alpha)= \langle g, f, d \rangle$ and $r_\alpha \upharpoonright \gamma_0 = \Lambda_0$ such that
\[
r_\alpha \Vdash~\langle   j(\alpha), \epsilon \rangle <_{j^*(\lusim{T})} \langle \mu, \epsilon  \rangle.
\]
For each $\alpha \in L$, denote $r_\alpha = \langle  \langle g, f, H_\alpha, F_\alpha     \rangle      ,d  \rangle$. Then $H_\alpha(\gamma) \in j^*(U_\gamma)=U_\gamma$
and $j^*(H_\alpha(\gamma))=H_\alpha(\gamma)$. By counting arguments, we can find an unbounded subset of $L$ on which
$\alpha \mapsto r_\alpha \upharpoonright \gamma+1 $ is constant. So we can find an unbounded subset $\bar{L} \subseteq L, \bar{L} \in V[H^*],$
and $\bar{\Lambda}$ such that for all $\alpha \in \bar{L}, r_\alpha \upharpoonright \gamma+1=\bar{\Lambda}.$ Note that $\bar{\Lambda} \in \Xi_{\gamma+1}$
and $\bar{\Lambda} \upharpoonright \gamma_0=\Lambda_0.$

Working in $V[H],$ consider the narrow system $\mathcal{S}= \langle L, \mathcal{R}  \rangle$ of height $\kappa_\eta^+$
and levels of size $\kappa,$ where
\begin{itemize}
\item $\mathcal{R}=\{ R_{\Lambda}: \Lambda \in \Xi_{\gamma+1}, \Lambda \upharpoonright \gamma_0 =\Lambda_0$ and  $stem(\Lambda)=stem(\Lambda_0)   \}$,

\item For nodes $a, b,$ we have
\[
\langle a, b  \rangle \in R_{\Lambda} \Leftrightarrow \Lambda \Vdash^* a <_{\lusim{T}} b,
\]

\end{itemize}
Note that $|\mathcal{R}| < \kappa_{\gamma+3}$. For any $R_{\Lambda} \in \mathcal{R},$ consider the branch
\[
b_{R_{\Lambda}} = \{ \langle \alpha, \xi \rangle: \Lambda \Vdash^* \langle j(\alpha), \epsilon    \rangle <_{j^*(\lusim{T})}   \langle  \mu, \epsilon  \rangle     \}.
\]
Applying the preservation Lemma 2.3 to $\mathcal{S}$ for $\MCB'$ which is $\kappa_{\gamma+4}$-closed in $V[H]$,
we get that $\mathcal{S}$ has an unbounded branch in $V[H]$. I.e., in $V[H]$, there is an unbounded
$J \subseteq L$ and $\Lambda^* \in \Xi_{\gamma+1} $ with $\Lambda^* \upharpoonright \gamma_0 =\Lambda_0$ and  $stem(\Lambda^*)=stem(\Lambda_0)$,
 such that for all
$\alpha < \beta$ in $J$ we have $\Lambda^* \Vdash^*$`` $\langle \alpha, \epsilon \rangle <_{\lusim{T}}      \langle \beta, \epsilon  \rangle$''.

Let $\rho=\min(J).$ For every $\alpha \in (L \setminus \rho) \setminus J$ let $\alpha'=\min(J\setminus \alpha)$
and let $\Lambda^*_\alpha \in \Xi_{\gamma+1}$ be such that $\Lambda^*_\alpha \upharpoonright \gamma_0=\Lambda_0, stem(\Lambda^*_\alpha)=stem(\Lambda_0)$ and $\Lambda^*_\alpha \Vdash^*$`` $\langle \alpha, \epsilon \rangle <_{\lusim{T}}      \langle \alpha', \epsilon  \rangle$''. We can find such
a $\Lambda^*_\alpha$ by our assumption
$\Lambda_0 \Vdash^*$``$u_\alpha <_{\lusim{T}} u_{\alpha'}$''.

Let $\alpha \in L \setminus \rho.$ If $\alpha \in J$ define $\Lambda_\alpha =\Lambda^*_\alpha.$
If $\alpha \notin J,$ then set $\Lambda_\alpha=\Lambda^*_{\alpha'}  \wedge \Lambda^*.$
Then $\rho$ and $\langle   \Lambda_\alpha: \alpha \in L \setminus \rho        \rangle$
are as required.
\end{proof}
\begin{lemma}
There are $\rho< \kappa_\eta^+$ and conditions $\langle p_\alpha: \alpha \in J \setminus \rho     \rangle$
with $p_\alpha \upharpoonright \gamma_0+1 = \langle \langle \bar{g}, \bar{f}, \bar{H}, \bar{F}  \rangle, \bar{d} \rangle$
such that for all $\alpha < \beta$ in $J \setminus \rho, p_\alpha \wedge p_\beta \Vdash$``$u_\alpha <_{\lusim{T}} u_\beta$''.
Here $ p_\alpha \wedge p_\beta $ denotes the weakest extension of $p_\alpha$ and $p_\beta.$
\end{lemma}
\begin{proof}
Let $\Pi= \langle \langle \bar{g}, \bar{f}, \bar{H}, \bar{F}  \rangle, \bar{d} \rangle.$
We define sequences $\langle \rho_\xi: \gamma_0 < \xi< \eta   \rangle$ and $\langle \Lambda^\xi_\alpha: \alpha \in J \setminus \rho_\xi,   \gamma_0 < \xi< \eta     \rangle$ by induction on $\xi$ such that for all such $\xi:$
\begin{enumerate}
\item [$(1)_\xi$:] For all $\alpha \in J \setminus \rho_\xi, \Lambda^\xi_\alpha \in \Xi_{\xi}$,

\item [$(2)_\xi$:] For all $\zeta < \xi$ and all $\alpha \in J \setminus \rho_\xi$ we have $\Lambda^\xi_\alpha \upharpoonright \zeta = \Lambda^\zeta_\alpha,$
\item [$(3)_\xi$:] For all $\alpha < \beta$ in $J \setminus \rho_{\xi+1}$, for all $\Lambda^* \in \Xi_{\xi+1}$ with $stem(\Lambda^*)$
extending $stem(\Pi)$ with $\max(\dom(g_{\Lambda^*}))=\xi,$ if $\Lambda^*$ is compatible with $\Lambda^{\xi+1}_\alpha$
and $\Lambda^{\xi+1}_\beta,$
then $\Lambda^* \Vdash^*$``$u_\alpha <_{\lusim{T}} u_\beta$''.
\end{enumerate}

For $\xi=\gamma_0+1$ set $\rho_{\gamma_0+1}=0$ and $\Lambda^{\gamma_0+1}_\alpha=\Pi,$ for all $\alpha \in J.$

For limit $\xi > \gamma_0$ set $\rho_\xi=\sup_{\gamma_0 < \zeta < \xi} \rho_\zeta,$
and for $\alpha \in J \setminus \rho_\xi$ let $\Lambda^\xi_\alpha$ be the unique element of $\Xi_\xi$
such that for all $\gamma_0 < \zeta < \xi, \Lambda^\xi_\alpha \upharpoonright \zeta = \Lambda^\zeta_\alpha.$
Note that this is well-defined by assumptions $(2)_\zeta, \gamma_0 < \zeta < \xi.$
Clearly $(1)_\xi$ and $(2)_\xi$ are satisfied and there is noting to prove related to $(3)_\xi$.

Now suppose that we have defined $\langle \rho_\zeta: \gamma_0 < \zeta \leq \xi \rangle $ and $ \langle \Lambda^\zeta_\alpha: \alpha \in J \setminus \rho_\zeta, \gamma_0 < \zeta \leq \xi  \rangle.$ We define $\rho_{\xi+1}$ and
$ \langle \Lambda^{\xi+1}_\alpha: \alpha \in J \setminus \rho_{\xi+1} \rangle.$

Call $\Lambda \in \Xi_\zeta, \gamma_0 <\zeta \leq \xi$ fits $\alpha$, if $stem(\Lambda)$
extends $stem(\Pi)$ and $\Lambda $ is compatible with $\Lambda^{\varsigma}_\alpha, \gamma_0< \varsigma \leq \zeta$.
For any $\Lambda \in \Xi_\zeta, \zeta \leq \xi$ with $stem(\Lambda)$
extending  $stem(\Pi)$ set
\[
J^{\Lambda} = \{ \alpha \in J \setminus \rho_\xi: \Lambda \text{~fits~} \alpha          \}.
\]
We define a function $\Lambda \mapsto \rho^{\Lambda}$ on all $\Lambda$'s
as above as follows:
\begin{itemize}
\item If $J^{\Lambda}$ is bounded in $\kappa_\eta^+$, then $\rho^{\Lambda} < \kappa_\eta^+$ is a bound,

\item Otherwise, let $\rho^{\Lambda}$ and $\langle \Pi^{\Lambda}_\alpha: \alpha \in J^\Lambda \setminus \rho^{\Lambda}   \rangle$ be given by Lemma 3.19, applied to $\Lambda$ and $J^{\Lambda}.$
\end{itemize}
Let $\rho_{\xi+1}=\sup\{ \rho^\Lambda:  \Lambda \in \Xi_\zeta, \zeta \leq \xi, stem(\Lambda)$
extends $stem(\Pi)     \}$. Note that $\rho_{\xi+1} < \kappa_\eta^+.$
For $\alpha \in J \setminus \rho_{\xi+1}$ define $\Lambda^{\xi+1}_\alpha$ so that
\begin{enumerate}
\item $\Lambda^{\xi+1}_\alpha \upharpoonright \xi = \Lambda^{\xi}_\alpha$,

\item $\xi \in \dom(H_{\Lambda^{\xi+1}_\alpha})=\dom(F_{\Lambda^{\xi+1}_\alpha})$,

\item If $X= \{ \Lambda:  \Lambda \in \Xi_\zeta, \zeta \leq \xi$ and $\Lambda$ fits $\alpha   \}$, then
\[
H_{\Lambda^{\xi+1}_\alpha}(\xi)= \bigtriangleup_{\Lambda \in X} H_{\Pi^{\Lambda}_\alpha}(\xi),
\]
and
\[
 [F_{\Lambda^{\xi+1}_\alpha}(\xi)]_{U_\xi} = \bigcup_{\Lambda \in X}[F_{\Pi^{\Lambda}_\alpha}(\xi)]_{U_\xi}.
\]

\item By shrinking $H_{\Lambda^{\xi+1}_\alpha}(\xi)$, we can assume that for all $y \in H_{\Lambda^{\xi+1}_\alpha}(\xi),$
we have
\[
F_{\Lambda^{\xi+1}_\alpha}(\xi)(y) = \bigcup_{\Lambda \prec y}F_{\Pi^{\Lambda}_\alpha}(\xi)(y).
\]
\end{enumerate}
Now we have to verify the above conditions are satisfied. Conditions $(1)_\xi$ and $(2)_\xi$
are easily seen to be true. For $(3)_\xi$, assume that $\alpha < \beta$ are in $J \setminus \rho_\xi$ and
$\Lambda^* \in \Xi_{\xi+1}$
is such that $stem(\Lambda^*)$ extends $stem(\Pi)$ with $\max(\dom(g_{\Lambda^*}))=\xi.$ Also assume that $\Lambda^*$ is compatible with $\Lambda^{\xi+1}_\alpha$
and $\Lambda^{\xi+1}_\beta.$ Consider $\Lambda = \Lambda^* \upharpoonright \xi.$
Then $\Lambda$ fits both $\alpha$ and $\beta,$ so by our construction and Lemma 3.19, we can easily conclude that
\[
 \langle \langle  g_\Lambda ^{\frown} \langle \xi, g_{\Lambda^*}(\xi) \rangle , f_\Lambda ^{\frown} \langle \xi, F_{\Lambda^{\xi+1}_\alpha}(\xi)(y) \cup F_{\Lambda^{\xi+1}_\beta}(\xi)(y) \rangle, H_{\Lambda^{\xi+1}_\alpha} \wedge H_{\Lambda^{\xi+1}_\beta}\upharpoonright \xi, F_{\Lambda^{\xi+1}_\alpha} \vee F_{\Lambda^{\xi+1}_\beta} \upharpoonright \xi \rangle, d                          \rangle \Vdash^* u_\alpha <_{\lusim{T}} u_\beta.
\]
This gives
$\Lambda^* \Vdash^*$``$u_\alpha <_{\lusim{T}} u_\beta$'', and the result follows.

Finally set $\rho=\sup_{\gamma_0 < \xi < \eta} \rho_\xi < \kappa_\eta^+$
 and for each $\alpha \in J \setminus \rho$ let $p_\alpha$ be the unique condition
 obtained using the sequence $\langle  \Lambda^\xi_\alpha: \gamma_0 < \xi < \eta           \rangle$. So
 \begin{itemize}
 \item $stem(p_\alpha)=stem(\Pi),$
 \item $p_\alpha \upharpoonright \gamma_0+1= \Pi,$
 \item For all $\xi$ with $\gamma_0 < \xi < \eta, \xi \in \dom(H^{p_\alpha})$,
 \[
 H^{p_\alpha}(\xi) = H_{\Lambda^{\xi+1}_\alpha}(\xi),
 \]
 and
 \[
 F^{p_\alpha}(\xi) = F_{\Lambda^{\xi+1}_\alpha}(\xi).
 \]
 \end{itemize}

If $q \leq p_\alpha \wedge p_\beta$,
 then by construction $stem(q) \Vdash^*$``$u_\alpha <_{\lusim{T}} u_\beta$'', so
$q \nVdash$``$u_\alpha \nless_{\lusim{T}} u_\beta$''. It follows that $p_\alpha \wedge p_\beta \Vdash$``$u_\alpha <_{\lusim{T}} u_\beta$''.
The lemma follows.
\end{proof}

The rest of the argument is standard.  Let $\langle p_\alpha: \alpha \in J \setminus \rho     \rangle$ be as in the above Lemma.
We want to find a condition $q$ that forces that the set $\{\alpha \in J: p_\alpha\in G\}$ is unbounded in $\kappa_\eta^+$, since such a $q$ forces that there is a branch in $\lusim{T}$, namely, the downward closure of $\{\langle \alpha , \epsilon \rangle : p_\alpha \in G\}$.

Assume that this is not the case. So, it is forced that the set $\{\alpha \in J : p_\alpha \in G\}$ is bounded. Since $\MRB$ is $\kappa_\eta^+$-c.c., in $V[H]$, there is an ordinal $\beta < \kappa_\eta^+$ such that it is forced by the trivial condition that $\{\alpha : p_\alpha \in G\}\subseteq \beta$. This is impossible, as for any $\gamma \in J \setminus \beta$, $p_\gamma$ forces the opposite statement.

This completes the proof of the theorem in the case $\eta < \kappa_0$.
We now consider the general case. Fix the sequence $\langle \kappa_\xi: \xi \leq \eta     \rangle$
as before. As $\eta < \kappa_\eta,$ we can find the least $\xi_* < \eta$
such that $\kappa_{\xi_*-1} \leq\eta < \kappa_{\xi_*}$, where $\kappa_{-1}=\omega$. Note that we can assume that $\xi_* >0.$
 We consider two cases:
 \begin{enumerate}
 \item $\eta > \kappa_{\xi_*-1}:$ Then by the same arguments as above, we can find a generic extension in which cardinals below $\kappa_{\xi_*-1}$
 are preserved, and $\kappa_\eta^+$ is the successor of the limit of the first $\eta - \kappa_{\xi_*-1}$
 measurable cardinals above $\kappa_{\xi_*-1}$ which has the tree property.
 But then $\kappa_\eta^+$ is the successor of the limit of the first $\eta$
 measurable cardinals and it has the tree property.

 \item $\eta = \kappa_{\xi_*-1}:$ Then $\eta - \xi_* =\eta,$ so again by the same arguments as above we can find a generic extension in which
 $\kappa_{\xi_*}$ changes its cofinality to $\eta, \kappa_\eta^+$ becomes the new successor of $\kappa_{\xi_*}$,
 $\kappa_{\xi_*}$ is the limit of the first $\eta$ measurable cardinals and $\kappa_\eta^+$ has the tree property.
 \end{enumerate}
 The theorem follows.
\hfill$\Box$
\begin{remark}
By the same argument, we can prove the following: Assume $\lambda$ is a singular limit of $\eta$ supercompact cardinals, where $\eta \leq \lambda$
is a limit ordinal. Then there is a generic extension in which $\lambda^+=\aleph_{\eta^2+1},$ and the tree property holds at $\aleph_{\eta^2+1}.$
\end{remark}

\section{Tree property and the failure of $SCH$}

By a result of Neeman \cite{neeman0}, it is consistent, relative to the existence of infinitely many supercompact  cardinals, that for some singular cardinal $\kappa$
 of countable cofinality, $SCH$ fails at $\kappa$ and the tree property holds at $\kappa^+$. Sinapova \cite{sinapova3} proved the same result for
 $\kappa=\aleph_{\omega^2}.$ She also proved similar result for a singular cardinal of uncountable cofinality \cite{sinapova2}.
 In this section we prove Theorem 1.2, which extends the above results of Neeman and Sinapova.
The basic idea of the proof is to combine the ideas of Section 3 with those of Sinapova \cite{sinapova3}. As the proof is similar to the above proof of Theorem 1.1,
we just mention the main changes which are needed to get the required result.

Fix an increasing continuous sequence $\langle \kappa_\xi: \xi \leq \eta  \rangle$ of supercompact cardinals and their limits as before, with $\eta < \kappa_0 = \kappa$
and $\lambda= \kappa_\eta.$
Let $\mathbb{C}$ be the Easton support iteration of $\Col(\kappa_i^{+}, < \kappa_{i+1}), i<\eta,$ and let $H$ be $\mathbb{C}$-generic over $V$.
Now force with $\Add(\kappa, \kappa_\eta^{++})_{V[H]}$
and let $E$ be $\Add(\kappa, \kappa_\eta^{++})_{V[H]}$-generic over $V[H]$. The next lemma can be proved as in Lemma 3.4. 
\begin{lemma}
In $V[H][E],$ there are sequences $\langle U_\xi: \xi<\eta     \rangle$ 
 and $\langle K_\xi: \xi<\eta  \rangle$ such that:
\begin{enumerate}
\item [(a)] $U_\xi$ is a normal measure on $P_\kappa(\kappa^{+\xi})$,

\item [(b)] $\kappa$ is $\kappa^{+\xi}$-supercompact in $N_\xi=Ult(V, U_\xi),$

\item [(c)] The sequence $\langle U_\xi: \xi<\eta     \rangle$  is Mitchell increasing, i.e., $\zeta<\xi<\eta \Rightarrow U_\zeta \in N_\xi,$


\item [(d)] $K_\xi$ is $\Col(\kappa^{+\eta+2}, < j_{U_\xi}(\kappa))_{N_\xi}$-generic over $N_\xi$.
\end{enumerate}
\end{lemma}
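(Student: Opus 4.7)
The proof of Lemma 4.1 follows the template of Lemma 3.4, with the modifications needed to accommodate the additional forcing $\Add(\kappa, \kappa_\eta^{++})_{V[H]}$ that causes the failure of SCH at $\kappa_\eta$ in $V[H][E]$. The plan is to start in $V$ with a single sufficiently large supercompactness embedding, lift it step by step through $\MCB * \Add(\kappa, \kappa_\eta^{++})$, and then derive all the measures $U_\xi$ and guiding generics $K_\xi$ from this one lifted embedding. Concretely, I would fix in $V$ a $\theta$-supercompactness embedding $j : V \to M$ with critical point $\kappa$ and $\theta$ large enough (say $\theta \geq 2^{\kappa^{+\eta+2}}$) to accommodate the sizes of the forcings and guiding generics involved. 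By Laver indestructibility of $\kappa$ (assumption (4)) together with the fact that $\MCB$ is $\kappa$-directed closed, $\kappa$ remains $\theta$-supercompact in $V[H]$; more concretely, writing $j(\MCB) = \MCB * \dot{\MCB}'$ with $\dot{\MCB}'$ highly closed in $M[H]$, a standard master-condition argument yields a generic $H^+$ for $\dot{\MCB}'$ over $M[H]$ extending $j''H$, and thereby lifts $j$ to $j^+ : V[H] \to M[H][H^+]$.

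Next I would lift $j^+$ through the Cohen forcing. In $M[H][H^+]$ the image forcing is $\Add(j(\kappa), j(\kappa_\eta^{++}))$. Because $|E| = \kappa^{+\eta+2}$ and $M[H][H^+]$ is $\kappa^{+\eta+2}$-closed (inherited from the $\theta$-supercompactness of $j$), the union $p^* = \bigcup j^+{}''E$ is a legitimate condition in the image forcing and serves as a master condition below which I diagonalize against the $j(\kappa^{+\eta+2})$-many dense sets of $\Add(j(\kappa), j(\kappa_\eta^{++}))^{M[H][H^+]}$, working in a suitably large outer universe, to obtain a generic $E^+$ and the final lift $j^{++} : V[H][E] \to M[H][H^+][E^+]$. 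Using that $\Col(\kappa^{+\eta+2}, < j(\kappa))^{M[H][H^+][E^+]}$ is $\kappa^{+\eta+2}$-closed and small relative to the available closure, an analogous construction yields a single guiding generic $K$ for this collapse over $M[H][H^+][E^+]$.

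From the pair $(j^{++}, K)$ I then derive the required sequences. For each $\xi < \eta$, set $U_\xi = \{\, X \subseteq P_\kappa(\kappa^{+\xi}) : j^{++}{}''\kappa^{+\xi} \in j^{++}(X)\,\}$; this is a normal measure on $P_\kappa(\kappa^{+\xi})$ in $V[H][E]$, and the seed $j^{++}{}''\kappa^{+\xi}$ inside $N_\xi = \Ult(V[H][E], U_\xi)$ witnesses that $\kappa$ is $\kappa^{+\xi}$-supercompact in $N_\xi$. Mitchell-increasingness is automatic: for $\zeta < \xi$, the measure $U_\zeta$ can be recovered inside $N_\xi$ from the seed of $U_\xi$ via the canonical factoring of embeddings. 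The guiding generics $K_\xi$ are obtained by pushing $K$ through the natural factor map from $M[H][H^+][E^+]$ into $N_\xi$, and since the collapse forcing factors compatibly, $K_\xi$ is $\Col(\kappa^{+\eta+2}, < j_{U_\xi}(\kappa))^{N_\xi}$-generic over $N_\xi$.

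The main technical obstacle, exactly as in Lemma 3.4, is the construction of the master-condition lift $E^+$ together with the guiding generic $K$: one must verify that $p^* = \bigcup j^+{}''E$ is genuinely a condition in $M[H][H^+]$ (which relies on the $\kappa^{+\eta+2}$-closure of $M[H][H^+]$) and that the diagonalization against dense sets can be carried out, a routine but delicate cardinal-arithmetic count that uses GCH in the ground model (assumption (5)) and the $\kappa^+$-c.c.\ of the Cohen forcing to keep the number of relevant dense sets manageable. Once these liftings are in place, verifying the required coherence of the $U_\xi$ and $K_\xi$ is mechanical.
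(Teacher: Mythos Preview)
Your approach has a genuine gap at the Mitchell-increasing step, and this is exactly where Lemma 4.1 differs from Lemma 3.4. You assert that ``Mitchell-increasingness is automatic: for $\zeta < \xi$, the measure $U_\zeta$ can be recovered inside $N_\xi$ from the seed of $U_\xi$.'' In the GCH setting of Lemma 3.4 this does work, because $|U_\zeta| = 2^{\kappa^{+\zeta}} = \kappa^{+\zeta+1} \leq \kappa^{+\xi}$ and $N_\xi$ is closed under $\kappa^{+\xi}$-sequences, so $U_\zeta$ literally sits in $N_\xi$. But in $V[H][E]$ the Cohen forcing has blown up $2^\kappa$ to $\kappa_\eta^{++}$, hence $2^{\kappa^{+\zeta}} = \kappa_\eta^{++}$ as well; now $|U_\zeta| = \kappa_\eta^{++}$ while $N_\xi$ is only closed under $\kappa^{+\xi}$-sequences with $\xi < \eta$. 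There is no reason for such a large object to lie in $N_\xi$, and knowing the seed $j_{U_\xi}{}''\kappa^{+\zeta}$ does not help, since $N_\xi$ has no access to the map $X \mapsto j_{U_\xi}(X)$ on a domain of size $\kappa_\eta^{++}$. A related problem is your construction of $E^+$ ``in a suitably large outer universe'': if $E^+$ is generic over $V[H][E]$ then the lifted embedding $j^{++}$, and hence the derived $U_\xi$, live in that outer model rather than in $V[H][E]$.

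The paper circumvents both problems by a different route. It does not lift a single ground-model embedding; instead it uses Laver indestructibility to work directly with supercompactness embeddings of $V[H][E]$, and it builds the sequence $\langle U_\xi : \xi < \eta\rangle$ \emph{inductively}. Step~1 shows (using a $\kappa_\eta^{++}$-supercompact embedding and a reflection argument) that for any prescribed $\mathcal{X} \subseteq P(P_\kappa(\kappa^{+\xi}))$ there is a normal measure on $P_\kappa(\kappa_\eta^+)$ whose ultrapower contains $\mathcal{X}$. Step~2 refines this via the Gitik--Sharon/Woodin surgery technique to obtain a $\kappa_\eta^+$-supercompactness embedding $j^*$ with $\mathcal{X}$ in the target model and with $j^*(\kappa)$ enumerated by the Cohen generics $j^*(g_\gamma)(\kappa)$; this last condition is what makes the guiding generic $K_\xi$ constructible. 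Step~3 then derives $U_\xi$ from $j^*$. At stage $\xi$ one takes $\mathcal{X}$ to encode all previously built $U_\zeta$, so the Mitchell order is obtained \emph{by design} rather than for free.
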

\begin{proof}
We sketch the proof of the lemma and refer to \cite{sinapova} Section 3.1, for details. The lemma is proved in several steps. For $\gamma < \kappa_\eta^{++}$
let $g_\gamma: \kappa \to \kappa$ denote the $\gamma$-th generic function added by $E$.
\\
{\bf Step 1.} Assume $\xi < \eta$ and $\mathcal{X} \subseteq P(P_\kappa(\kappa^{+\xi})), \mathcal{X} \in V[H][E].$ Then there exists a normal measure $U$ on $P_\kappa(\kappa_\eta^+)$ such that $\mathcal{X} \in \Ult(V[H][E], U)$.

To see this, assume on the contrary that there is no such a normal measure $U$. Let $\phi(\mathcal{X}, \kappa, \xi, \eta)$ be the assertion ``
$\mathcal{X} \subseteq P(P_\kappa(\kappa^{+\xi}))$ and for all normal measures $U$ on  $P_\kappa(\kappa_\eta^+),$  $\mathcal{X} \notin \Ult(V[H][E], U)$''.
so for some $\mathcal{X} \in V[H][E], \phi(\mathcal{X}, \kappa, \xi, \eta)$ holds in $V[H][E].$

Let $j: V[H][E] \to M^*$ be a $\kappa_\eta^{++}$-supercompactness embedding with critical point $\kappa$. Then $M^* \models$``$\exists \mathcal{X}, \phi(\mathcal{X}, \kappa, \xi, \eta)$''. Let $U$ be the normal measure on $P_\kappa(\kappa_\eta^+)$  generated by $j$ and let $k: \Ult(V[H][E], U) \to M^*$
be the natural embedding with $k \circ j_U=j$. By elementarity of $k$ and the fact that $k(\kappa)=\kappa,$
we have $\Ult(V[H][E], U)\models$``$\exists \mathcal{X}, \phi(\mathcal{X}, \kappa, \xi, \eta)$''. Let $\mathcal{X}'$ witness this. We have $k(\mathcal{X}')=\mathcal{X}'$ and so by elementarity of $k$, $M^* \models$``$\phi(\mathcal{X}', \kappa, \xi, \eta)$''. But $\mathcal{X}' \in \Ult(V[H][E], U)$
and $U \in M^*,$ a contradiction.
\\
{\bf Step 2.} Assume $\xi < \eta$ and $\mathcal{X} \subseteq P(P_\kappa(\kappa^{+\xi})), \mathcal{X} \in V[H][E].$ Then there exists a $\kappa_\eta^+$-supercompactness embedding $j^*: V[H][E] \to M[H^*][E^*]$  with $crit(j^*)=\kappa$ such that $\mathcal{X} \in M[H^*][E^*]$
and $j^*(\kappa) = \{j^*(g_\gamma)(\kappa): \gamma < \kappa_\eta^{++}         \}$.

The proof uses ideas of \cite{gitik-sharon}. Let $\pi: V[H][E] \to M[H^*][E']$ be a a $\kappa_\eta^+$-supercompactness embedding  with critical point $\kappa$ such that $\mathcal{X} \in M[H^*][E']$ We also assume that $\pi$ extends some $\kappa_\eta^+$-supercompactness embedding $j: V[H] \to M[H^*].$

Working in $V[H][E],$ enumerate $j(\kappa)$ as $\langle y_\gamma: \gamma < \kappa_\eta^{++}  \rangle$. also let $E'_\gamma$ be the restriction
of $E'$ to $j(\gamma)$. By ideas from Woodin's surgery method we can modify $E'_\gamma$
to find some $E^*_\gamma$ which is still generic for the related forcing notion, $E^*=\bigcup_{\gamma < \kappa_\eta^{++}} E^*_\gamma$
is $j(\Add(\kappa, \kappa_\eta^{++}))$-generic over $M[H^*]$ so that we can lift $j$ to some $j^*: V[H][E] \to M[H^*][E^*]$
and such that for $\gamma < \kappa_\eta^{++}, j^*(g_\gamma)(\kappa)=y_\gamma.$ Also it is easily seen that $\mathcal{X} \in M[H^*][E^*]$, which completes the proof.
\\
{\bf Step 3.} Assume $\xi < \eta$ and $\mathcal{X} \subseteq P(P_\kappa(\kappa^{+\xi})), \mathcal{X} \in V[H][E].$ Then there is a normal measure
$U_\xi$ on $P_\kappa(\kappa^{+\xi})$ such that $\mathcal{X} \in N_\xi=\Ult(V[H][E], U_\xi)$. Further, there is $K_\xi \in V[H][E]$
which is  $\Col(\kappa^{+\eta+2}, < j_{U_\xi}(\kappa))_{N_\xi}$-generic over $N_\xi$.

This follows easily from step 2. Let $j^*: V[H][E] \to M[H^*][E^*]$ be a  $\kappa_\eta^+$-supercompactness embedding  with critical point $\kappa$ as in Step 2 
and let $U_\xi$ be the normal measure on $P_\kappa(\kappa^{+\xi})$ derived from $j^*$, i.e.,
\[
U_\xi= \{ X \subseteq P_\kappa(\kappa^{+\xi}): j[\kappa^{+\xi}] \in j^*(X)  \}.
\]
Then $U_\xi$ is as required. The construction of $K_\xi$ is also by standard arguments.

Now by induction, we can easily construct the sequences 
$\langle U_\xi: \xi<\eta     \rangle$ 
 and $\langle K_\xi: \xi<\eta  \rangle$ with the required properties. This completes the proof of the lemma.
 \end{proof}
Now define the sets $X_\xi, \xi < \eta$, the measures $U^{\zeta}_{\xi, x}$ and the generic filters $K^{\zeta}_{\xi, x}$, for $\zeta < \xi < \eta$ and $x \in U_\xi$
as before. The forcing notion $\MPB$
is defined over $V[H][E]$ as in Definition 3.5, with the following changes:
\begin{itemize}
\item In $(7$-$1)$, require $f(\xi) \in \Col(\kappa_{g(\xi)}^{+\eta+2}, < \kappa_{g(\zeta)}),$
\item In $(7$-$2)$, require $f(\xi) \in \Col(\kappa_{g(\xi)}^{+\eta+2}, < \kappa),$
\item In $(9$-$1)$, require $F(\xi)(y) \in \Col(\kappa_{y}^{+\eta+2}, < \kappa_x),$
\item In $(9$-$2)$, require $F(\xi)(y) \in \Col(\kappa_{y}^{+\eta+2}, < \kappa).$
\end{itemize}
The order relation is defined as before (see Definition 3.7).
Finally set $\mathbb{R}=\MPB* \lusim{Col}(\omega, \tau_0^{+\eta})$. Let
$G*K $ be $\mathbb{R}$-generic over $V[H][E]$ and let $\langle  \tau_\xi: \xi < \eta    \rangle$
be the Prikry sequence added by $G$.
An analysis as in previous section shows that $\kappa$ is preserved in the resulting generic extension,
 $\kappa = \aleph_{\eta^2}^{V[H][E][G][k]}$ and $\lambda^+= \kappa_\eta^+= \aleph_{\eta^2+1}^{V[H][E][G][K]}$. Also
 $V[H][E][G][k] \models$``$2^{\aleph_{\eta^2}} \geq \kappa_\eta^{++} =\aleph_{\eta^2+2}$
 and so $SCH$ fails at $\aleph_{\eta^2}$''.

It remains to show that there is an extension of $V[H][E]$ by $\mathbb{R}$ in which tree property holds at $\kappa_\eta^+.$ Assume on the contrary that
for any $\mathbb{R}$-generic filter $G*K$ over $V[H][E],$ the tree property fails in $V[H][E][G][K]$ at $\kappa_\eta^+.$
Let $\lusim{T} \in V[H][E]$ be an $\mathbb{R}$-name which is forced by the trivial condition to be a $\kappa_\eta^+$-Aronszajn tree.
We may further suppose that the trivial condition forces ``the elements of the $\alpha$-th level of $\lusim{T}$ are elements of $\{\alpha\}\times \kappa$
for $\alpha < \kappa_\eta^{+}$''. We will show that $T=\lusim{T}[G*K]$ has a cofinal branch in $V[H][E][G][K]$.
The next lemma is similar to Lemma 3.15 and its proof is the same.

\begin{lemma}
There is $\vec{\gamma} \in \eta^{<\omega}$ and an unbounded subset $I \subseteq \kappa_\eta^+, I \in V[H][E],$ such that
for all $\alpha < \beta$ in $I$, there are $\xi,\zeta<\kappa$ and a condition $r=(p, d) \in \MRB$ with $\dom(g^{p})=\vec{\gamma}$
such that $r \Vdash~ \langle \alpha, \xi \rangle \leq_{\lusim{T}} \langle \beta, \zeta  \rangle$.
\end{lemma}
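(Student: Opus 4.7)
The plan is to mimic the proof of Lemma 3.15, with the sole new ingredient being that the $\kappa_\eta^+$-supercompact embedding must now be lifted through the Cohen forcing $E$ as well as through $H$. Once the embedding is in hand, the argument proceeds exactly as before: pick a node above $\sup j^*[\kappa_\eta^+]$ in the image tree, pull back witnessing conditions for each $\alpha$, and apply a counting argument to stabilize the stem.

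First I would fix a $\kappa_\eta^+$-supercompact embedding $j: V \to M$ with $\crit(j) = \kappa$. Since $\mathbb{C}$ is $\kappa$-directed closed above $\kappa$ in $M$, standard master-condition arguments lift $j$ to $j: V[H] \to M[H^*]$. To extend further through $E$, I would invoke Woodin's surgery method exactly as in Step 2 of the proof of Lemma 4.1: starting from an auxiliary $j(\Add(\kappa, \kappa_\eta^{++}))$-generic $E'$ over $M[H^*]$, one corrects the values of $E'$ at the relevant coordinates to obtain $E^*$ that is still $j(\Add(\kappa, \kappa_\eta^{++}))$-generic over $M[H^*]$ and satisfies $j[E] \subseteq E^*$. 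This yields the lifted embedding $j^*: V[H][E] \to M[H^*][E^*]$ with $\crit(j^*) = \kappa$.

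Next, I would take $L^* = G^* * K^*$ to be $j^*(\MRB)$-generic over $M[H^*][E^*]$ arranged so that the first Prikry point $\tau_0^*$ added by $G^*$ equals $\kappa$; by the factorization analysis this gives
\[
j^*(\MRB) \cong j^*(\MPB) \times \Col(\omega, \kappa_\eta),
\]
so $\kappa_\eta^+$ is preserved and remains regular in $M[H^*][E^*][L^*]$. Let $T^* = j^*(\lusim{T})[L^*]$, fix $\mu$ with $\sup j^*[\kappa_\eta^+] < \mu < j^*(\kappa_\eta^+)$, and choose a node $u$ on level $\mu$ of $T^*$. For every $\alpha < \kappa_\eta^+$ there are $\xi_\alpha < j^*(\kappa)$ and a condition $r_\alpha = (p_\alpha, d_\alpha) \in L^*$ with $r_\alpha \Vdash \langle j^*(\alpha), \xi_\alpha \rangle <_{j^*(\lusim{T})} u$. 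Since there are at most $\kappa_\eta$ many possible pairs $(\dom(g^{p_\alpha}), d_\alpha)$ and $\kappa_\eta^+$ is regular in $M[H^*][E^*][L^*]$, I would shrink to an unbounded $I^* \subseteq \kappa_\eta^+$ on which these are constant at some $(\vec{\gamma}, d)$.

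Finally, setting $g = g^{p_\alpha}$ (constant on $I^*$) and $f(\xi) = \bigcup_{\alpha \in I^*} f^{p_\alpha}(\xi)$ for $\xi \in \vec{\gamma}$, the pair $\langle g, f \rangle$ is a $\MPB$-stem by compatibility of the $f^{p_\alpha}$'s within the appropriate Levy collapse. Defining
\[
I = \{\alpha < \kappa_\eta^+ \mid \exists r \in j^*(\MRB),\ \stem(r) = \langle g, f, d \rangle,\ \exists \xi < j^*(\kappa),\ r \Vdash \langle j^*(\alpha), \xi \rangle <_{j^*(\lusim{T})} u\}
\]
exactly as in Lemma 3.15, the set $I$ lies in $V[H][E]$ (one pulls the definition back through $j^*$ and absorbs the parameters as in the original argument) and contains $I^*$, so it is unbounded. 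For $\alpha < \beta$ in $I$, the corresponding witnesses can be combined into a single condition $r \in \MRB$ with $\stem(r) = \langle g, f, d \rangle$ that forces $\langle \alpha, \xi \rangle \leq_{\lusim{T}} \langle \beta, \zeta \rangle$ for appropriate $\xi, \zeta < \kappa$. The main technical hurdle is the Woodin-surgical lifting of $j$ through $E$: one must check that the modification of $E'$ preserves full genericity over $M[H^*]$ and that the resulting $E^*$ really does absorb $j[E]$, which is precisely the content of Step 2 in the proof of Lemma 4.1.
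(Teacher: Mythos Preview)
Your overall strategy—lift a supercompact embedding with critical point $\kappa$, force below with $j^*(\MRB)$ so that the first Prikry point is $\kappa$, pick a node above $\sup j^*[\kappa_\eta^+]$, pull back witnesses, and stabilize the stem by counting—matches the paper's Lemma 3.15 exactly, and that is precisely what the paper intends: it states only that ``the proof is the same'' as Lemma 3.15, replacing $V[H]$ by $V[H][E]$ throughout.

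Where your proposal diverges is in how you obtain the embedding. You start from $j\colon V\to M$ and lift through $H$ and then through $E$ via Woodin surgery. This can be made to work, but it is an unnecessary detour. Since $\mathbb{C}*\Add(\kappa,\kappa_\eta^{++})$ is $\kappa$-directed closed and $\kappa$ is Laver indestructible, $\kappa$ remains fully supercompact in $V[H][E]$; one may therefore take directly a $\kappa_\eta^+$-supercompactness embedding $j\colon V[H][E]\to M$ defined in $V[H][E]$, and then the argument of Lemma 3.15 goes through verbatim—in particular, $I\in V[H][E]$ is immediate because $j$ and all the parameters live there. Your route requires you to check that the auxiliary generic $E'$ and the lifted $j^*$ can be constructed inside $V[H][E]$ (so that $I$ really lands there), which you gloss over; also note that Step~2 of Lemma~4.1 does not actually build such a lift from scratch, but starts from an embedding $\pi\colon V[H][E]\to M[H^*][E']$ already in $V[H][E]$ and modifies it for a different purpose (controlling $j^*(g_\gamma)(\kappa)$), so your citation of it is not quite apt. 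The direct approach avoids all of this.
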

Fix $\vec{\gamma}$ and $I$ as above lemma.
\begin{lemma}
There is in $V[H][E],$ an unbounded set $J \subseteq \kappa_\eta^+,$ a tuple $\langle \bar{g}, \bar{f}, \bar{H}, \bar{F}  \rangle$, some fixed
$\bar{d}$
and a sequence of nodes $\langle u_\alpha: \alpha \in J \rangle$ such that $\dom(\bar{g})=\dom(\bar{f})=\vec{\gamma}$ and if we set
$\gamma_0=\max(\vec{\gamma}),$ then $\bar{H}$ and $\bar{F}$ have domain $\gamma_0 \setminus \vec{\gamma}$
and for all $\alpha < \beta$ in $J$, there is a condition $r \in \MRB$ such that
\begin{enumerate}
\item $stem(r)= \langle \bar{g}, \bar{f}, \bar{d} \rangle,$
\item $r \upharpoonright (\gamma_0+1)= \langle \langle \bar{g}, \bar{f}, \bar{H}, \bar{F}  \rangle, \bar{d} \rangle$
\item $r \Vdash$`` $u_\alpha <_{\lusim{T}} u_\beta$ ''.
\end{enumerate}
\end{lemma}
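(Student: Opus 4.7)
The plan is to adapt the proof of Lemma~3.16 to the present setting, the only essential new ingredient being that the elementary embedding must be further lifted through the Cohen generic $E$ for $\Add(\kappa,\kappa_\eta^{++})$. I would fix a $\kappa_\eta^+$-supercompactness embedding $j : V \to N$ with critical point $\kappa_{\gamma_0+3}$, and lift $j$ in two stages: first through the Laver preparation $\MCB$ to some $j' : V[H] \to N[H^*]$ with $H^* = H * H'$ and $H'$ generic over $V[H]$ for a $\kappa_{\gamma_0+3}$-closed forcing $\MCB'$; then through $\Add(\kappa,\kappa_\eta^{++})$ to $j^* : V[H][E] \to N[H^*][E^*]$ via Woodin's surgery method, exactly as in Step~2 of the proof of Lemma~4.1, arranged so that $V[H^*][E^*] \subseteq V[H][E][H']$.

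Having $j^*$ in hand, I would pick $\mu \in j^*(I)$ with $\sup_{\alpha < \kappa_\eta^+} j^*(\alpha) < \mu < j^*(\kappa_\eta^+)$, and, by elementarity applied to Lemma~4.2, for each $\alpha \in I$ obtain $\xi_\alpha, \delta_\alpha < \kappa$ and $r_\alpha = (p_\alpha, d_\alpha) \in j^*(\MRB)$ with $\dom(g^{p_\alpha}) = \vec{\gamma}$ and $r_\alpha \Vdash \langle j^*(\alpha), \xi_\alpha\rangle <_{j^*(\lusim{T})} \langle \mu, \delta_\alpha\rangle$. A pigeonhole argument in $V[H^*][E^*]$, using that the lifting adds no $\kappa_{\gamma_0}$-sequences, then produces an unbounded $I' \subseteq I$ together with fixed $\xi, \delta < \kappa$ and a fixed tuple $\langle \langle \bar{g}, \bar{f}, \bar{H}, \bar{F}\rangle, \bar{d}\rangle$ such that for all $\alpha \in I'$, $r_\alpha \upharpoonright (\gamma_0+1) = \langle \langle \bar{g}, \bar{f}, \bar{H}, \bar{F}\rangle, \bar{d}\rangle$, $\xi_\alpha = \xi$, and $\delta_\alpha = \delta$.

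Working in $V[H][E]$, I would form the narrow system $\mathcal{S} = \langle I, \mathcal{R}\rangle$ of height $\kappa_\eta^+$ and levels of width $\kappa$, with $\mathcal{R} = \{R_\Lambda : \Lambda \in \Xi_{\gamma_0+1},\ \max(\dom(g_\Lambda)) = \gamma_0\}$ and $\langle a, b\rangle \in R_\Lambda$ iff $\Lambda \Vdash^* a <_{\lusim{T}} b$, so that $|\mathcal{R}| < \kappa_{\gamma_0+2}$. For each $R_\Lambda$ the branch
\[
b_{R_\Lambda, \delta} = \{\langle \alpha, \xi\rangle : \Lambda \Vdash^* \langle j^*(\alpha), \xi\rangle <_{j^*(\lusim{T})} \langle \mu, \delta\rangle\}
\]
lies in $V[H^*][E^*] \subseteq V[H][E][H']$, and applying Lemma~2.4 (with trivial $\MQB$ and $\MRB = \MCB'$, which retains sufficient closure in $V[H][E]$ by the $\kappa^+$-c.c.\ of $\Add(\kappa,\kappa_\eta^{++})$) pulls an unbounded branch down to $V[H][E]$. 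From this branch, the desired $J$, tuple $\langle \bar{g}, \bar{f}, \bar{H}, \bar{F}, \bar{d}\rangle$, and sequence of nodes $u_\alpha = \langle \alpha, \xi\rangle$ are read off.

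The hard part will be the surgery step and its interaction with the preservation lemma: one must verify both that Woodin's surgery places $E^*$ inside $V[H][E][H']$ and that $\MCB'$ retains enough closure in $V[H][E]$ to satisfy the hypothesis of Lemma~2.4. Both of these are handled by arguments analogous to those already appearing in Step~2 of the proof of Lemma~4.1.
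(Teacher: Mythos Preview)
Your proposal has a genuine gap in the lifting step. Since $\crit(j) = \kappa_{\gamma_0+3} > \kappa$, the embedding $j$ acts as the identity on conditions of $\Add(\kappa,\kappa_\eta^{++})$ and $j(\Add(\kappa,\kappa_\eta^{++})) = \Add(\kappa, j(\kappa_\eta^{++}))$; Woodin's surgery (as in Step~2 of Lemma~4.1, where the critical point \emph{is} $\kappa$) is simply not relevant here. What the lift requires is a filter $E^*$ which is $\Add(\kappa, j(\kappa_\eta^{++}))$-generic over $N[H^*]$ with $j_1''E \subseteq E^*$. But $j(\kappa_\eta^{++}) > \kappa_\eta^{++}$, and $\MCB'$, being $\kappa_{\gamma_0+3}$-closed, adds no new subsets of $\kappa$; so $V[H][E][H']$ contains only the $\kappa_\eta^{++}$ Cohen subsets of $\kappa$ already added by $E$. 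There is no way to produce the required $E^*$ inside $V[H][E][H']$, and your containment $V[H^*][E^*] \subseteq V[H][E][H']$ fails.

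The paper deals with this by taking an \emph{additional} generic $F$ for $\Add(\kappa, j(\kappa_\eta^{++}))$ over $V[H][E]$ (also generic over $V[H][E][H']$), and building $E^*$ from $j_1''E$ together with $F$, so that $j^*$ lives in $V[H][E][H'][F]$. The preservation Lemma~2.4 is then applied over $V[H][E][F]$ (with $\MCB'$ playing the role of the closed factor) to get the branch, and hence $J$, $\Lambda$, and $\alpha \mapsto \xi_\alpha$, in $V[H][E][F]$ rather than in $V[H][E]$. A further argument---the branch-absorption argument of Neeman (Lemma~3.2 of \cite{neeman0}), exploiting that $F$ is Cohen at $\kappa$---is then needed to pull these objects down to $V[H][E]$. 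Your outline omits both the necessity of $F$ and this second reduction step, and without them the proof does not go through.
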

\begin{proof}
The proof is similar to the proof of Lemma 3.17, so we just sketch it. Let $j: V \rightarrow N$ be a $\kappa_\eta^+$-supercompact embedding with critical point $\kappa_{\gamma_0+3}.$ Using standard arguments we can extend $j$ to some
\[
j_1: V[H] \rightarrow N[H^*],
\]
where $H^*$ is $j(\MCB)$-generic over $V$. We can arrange so that $H^*=H*E*H'$, where $H'$
is $\MCB'$-generic over $V[H][E],$ for some $\MCB'$ which is
 $\kappa_{\gamma_0+3}$-distributive in $V[H][E]$ and it is
$\kappa_{\gamma_0+3}$-closed in $V[H].$ Let $F$ be $\Add(\kappa, j(\kappa_\eta^{++}))$-generic over $V[H][E].$
Then $F$ is also generic over $V[H][E][H']$. Define
\[
E^*=\{f \in \Add(\kappa, j(\kappa_\eta^{++})): f \upharpoonright j''\kappa_\eta^{++} \in j_1''E, f \upharpoonright (j(\kappa_\eta^{++}) \setminus j''\kappa_\eta^{++}) \in F           \}.
\]
Then $E^*$ is $\Add(\kappa, j(\kappa_\eta^{++}))$-generic over $V[H^*]=V[H][E][H']$ such that
$j_1''E \subseteq E^*$, and so we can extend $j$ to some
\[
j^*: V[H][E] \to N[H^*][E^*]
\]
which is defined in $V[H][E][F][H']=V[H][E][H'][F].$ Define a narrow system $\mathcal{S} \in V[H][E]$ as in the proof of Lemma 3.17.
Applying the preservation Lemma 2.4, we argue that $\mathcal{S}$ has a cofinal branch in $V[H][E][F].$ So in
$V[H][E][F]$ we can find  an unbounded
$J \subseteq I, \alpha \mapsto \xi_\alpha$ and $\Lambda = \langle \langle \bar{g}, \bar{f}, \bar{H}, \bar{F}  \rangle, \bar{d} \rangle \in \Xi_{\gamma_0+1}$ with
 $\max(\dom(g_\Lambda))=\gamma_0$ such that for all
$\alpha < \beta$ in $J$ we have $\Lambda \Vdash^*$`` $\langle \alpha, \xi_\alpha \rangle <_{\lusim{T}}      \langle \beta, \xi_\beta  \rangle$''.
Then by the argument in Lemma 3.2 from \cite{neeman0}, we can get that $J, \Lambda$ and $\alpha \mapsto \xi_\alpha$
in $V[H][E].$ Setting $u_\alpha= \langle \alpha, \xi_\alpha  \rangle$ for $\alpha \in J$ we get that for
$\alpha < \beta$ in $J$, there is a condition $r \in \MRB$ which satisfies the required properties of the lemma.
\end{proof}
Fix $J \subseteq \kappa_\eta^+,$  $\langle \langle \bar{g}, \bar{f}, \bar{H}, \bar{F}  \rangle, \bar{d} \rangle$
and the sequence of nodes $\langle u_\alpha: \alpha \in J \rangle$ as in the conclusion of the above lemma.
By shrinking $J$, if necessary, we can assume that for some $\epsilon < \kappa,$ each $u_\alpha= \langle \alpha, \epsilon  \rangle$.
The next lemma corresponds to Lemma 3.19 of last section.
\begin{lemma}
Suppose that $\Lambda_0=\langle  \langle g, f, H, F \rangle, d \rangle \in \Xi_{\gamma_0}$, $L \subseteq \kappa_\eta^+$ is unbounded and for all
$\alpha < \beta$ in $L$, we have $\Lambda_0 \Vdash^*$``$u_\alpha <_{\lusim{T}} u_\beta$''. Let $\gamma_0 \leq \gamma < \eta.$ Then there are $\rho< \kappa_\eta^+$
and $\langle \Lambda_\alpha: \alpha \in L \setminus \rho       \rangle \in V[H][E]$ such that:
\begin{enumerate}
\item $\Lambda_\alpha \in \Xi_{\gamma+1}, stem(\Lambda_\alpha)=stem(\Lambda_0)$ and $\Lambda_\alpha \upharpoonright \gamma_0=\Lambda_0,$


\item For all $\alpha < \beta$ in $L \setminus \rho$ and for all $y \in H_{\Lambda_\alpha}(\gamma) \cap H_{\Lambda_\beta}(\gamma)$ such that  $F_{\Lambda_\alpha}(\gamma)(y)$ and $F_{\Lambda_\beta}(\gamma)(y)$
are compatible, we have
\[
 \langle \langle  g ^{\frown} \langle \gamma, y \rangle , f ^{\frown} \langle \gamma, F_{\Lambda_\alpha}(\gamma)(y) \cup F_{\Lambda_\beta}(\gamma)(y) \rangle, H_{\Lambda_\alpha} \wedge H_{\Lambda_\beta} \upharpoonright \gamma, F_{\Lambda_\alpha} \vee F_{\Lambda_\beta} \upharpoonright \gamma \rangle,  d                          \rangle \Vdash^* u_\alpha <_{\lusim{T}} u_\beta.
\]
\end{enumerate}
\end{lemma}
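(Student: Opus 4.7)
The plan is to adapt the proof of Lemma 3.19 in the same way that Lemma 4.3 adapts Lemma 3.17 to the $SCH$-failing setting. The main additional ingredient, beyond the preservation lemma argument used in Section 3, is Woodin's surgery to lift the embedding through the Cohen forcing $E$, together with Lemma 3.2 from \cite{neeman0} to pull the eventual branch back from the Cohen extension into $V[H][E]$.

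First I would take a $\kappa_\eta^+$-supercompact embedding $j: V \to N$ with critical point $\kappa_{\gamma+4}$. Exactly as in the proof of Lemma 4.3, I extend $j$ step by step: first to $j_1: V[H] \to N[H^*]$ where $H^*=H*E*H'$ with $H'$ generic for some $\kappa_{\gamma+4}$-distributive (in $V[H][E]$) forcing $\MCB'$; then I take $F$ generic for $\Add(\kappa, j(\kappa_\eta^{++}))$ over $V[H][E]$, apply Woodin's surgery to combine $j_1''E$ and $F$ into an $E^*$ that is $\Add(\kappa, j(\kappa_\eta^{++}))$-generic over $N[H^*]$ with $j_1''E \subseteq E^*$, and lift to $j^*: V[H][E] \to N[H^*][E^*]$, defined in $V[H][E][H'][F]$.

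Next, pick $\mu \in j^*(L)$ with $\sup j''\kappa_\eta^+ < \mu < j(\kappa_\eta^+)$. By elementarity, for each $\alpha \in L$ there is some $r_\alpha=(p_\alpha,d) \in j^*(\MRB)$ with $stem(r_\alpha)=\langle g,f,d\rangle$, $r_\alpha \upharpoonright \gamma_0=\Lambda_0$, and $r_\alpha \Vdash \langle j(\alpha),\epsilon\rangle <_{j^*(\lusim{T})} \langle \mu,\epsilon\rangle$. Since $\MCB'$ is sufficiently closed in $V[H]$ and $\Add(\kappa,j(\kappa_\eta^{++}))$ has $\kappa^+$-c.c., a counting argument produces an unbounded $\bar{L}\subseteq L$ in the full extension on which $\alpha \mapsto r_\alpha \upharpoonright \gamma+1$ is constant, say equal to $\bar{\Lambda}\in\Xi_{\gamma+1}$ with $\bar{\Lambda}\upharpoonright\gamma_0=\Lambda_0$ and $stem(\bar{\Lambda})=stem(\Lambda_0)$.

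Working back in $V[H][E]$, I define the narrow system $\mathcal{S}=\langle L,\mathcal{R}\rangle$ of height $\kappa_\eta^+$ and levels of size $\kappa$, with
\[
\mathcal{R}=\{R_\Lambda : \Lambda\in\Xi_{\gamma+1},\ \Lambda\upharpoonright\gamma_0=\Lambda_0,\ stem(\Lambda)=stem(\Lambda_0)\},
\]
and $\langle a,b\rangle\in R_\Lambda \Leftrightarrow \Lambda \Vdash^* a<_{\lusim{T}} b$, and for each $R_\Lambda$ the branch $b_{R_\Lambda}=\{\langle\alpha,\epsilon\rangle : \Lambda \Vdash^* \langle j(\alpha),\epsilon\rangle <_{j^*(\lusim{T})} \langle\mu,\epsilon\rangle\}$. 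Applying Preservation Lemma 2.4, with $\MCB'$ playing the role of the closed factor and $\Add(\kappa,j(\kappa_\eta^{++}))$ providing the c.c.\ factor, $\mathcal{S}$ acquires an unbounded branch in $V[H][E][F]$; invoking Lemma 3.2 of \cite{neeman0}, this branch descends to $V[H][E]$. The branch gives, in $V[H][E]$, an unbounded $J\subseteq L$, a $\Lambda^*\in\Xi_{\gamma+1}$ extending $\Lambda_0$ at $\gamma_0$ with the same stem, and the witness ``$\Lambda^* \Vdash^* \langle\alpha,\epsilon\rangle<_{\lusim{T}}\langle\beta,\epsilon\rangle$'' for all $\alpha<\beta$ in $J$. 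Finally I patch this into a definition for every $\alpha\in L\setminus\rho$ exactly as at the end of the proof of Lemma 3.19: set $\rho=\min(J)$, take $\Lambda_\alpha=\Lambda^*_\alpha$ for $\alpha\in J$ and $\Lambda_\alpha=\Lambda^*_{\alpha'}\wedge\Lambda^*$ for $\alpha\in L\setminus J$, where $\alpha'=\min(J\setminus\alpha)$ and $\Lambda^*_\alpha$ is any element of $\Xi_{\gamma+1}$ witnessing $\Lambda_0 \Vdash^* u_\alpha<_{\lusim{T}} u_{\alpha'}$.

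The main obstacle is the same extra twist that distinguishes Lemma 4.3 from Lemma 3.17: the surgical lift through $E$ is only definable in $V[H][E][H'][F]$, so the naive preservation lemma at the level of Section~3 does not apply directly. The workaround is to apply Preservation Lemma 2.4 in $V[H][E][F]$ first and then invoke the Cohen-preservation argument of \cite{neeman0} to bring the branch down to $V[H][E]$, which is exactly what makes the $SCH$-failing version of the combinatorics go through.
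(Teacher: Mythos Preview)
There is a genuine gap. The counting argument you invoke to stabilize $r_\alpha \upharpoonright (\gamma+1)$ does not go through in the $SCH$-failing setting, and for the same reason your narrow system $\mathcal{S}$ is not narrow. In Section~3 one has $2^{\kappa^{+\gamma}}=\kappa^{+\gamma+1}<\kappa_{\gamma+4}=\crit(j^*)$, so $j^*(U_\gamma)=U_\gamma$, each $H_\alpha(\gamma)$ lies in $U_\gamma$, and there are fewer than $\kappa_{\gamma+3}$ possible values for $r_\alpha\upharpoonright(\gamma+1)$; this is what makes both the pigeonhole step and the bound $|\mathcal{R}|<\kappa_{\gamma+3}$ work in Lemma~3.19. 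After adding $E=\Add(\kappa,\kappa_\eta^{++})$, however, $2^{\kappa^{+\gamma}}\geq\kappa_\eta^{++}$, so there are at least $\kappa_\eta^{++}$ many $U_\gamma$-measure-one sets and hence at least $\kappa_\eta^{++}$ many $\Lambda\in\Xi_{\gamma+1}$ with the prescribed stem and restriction. Your index set $\mathcal{R}$ therefore has size $\geq\kappa_\eta^{++}>\kappa_\eta^+$, so $\mathcal{S}$ is not a narrow system at $\kappa_\eta^+$ and neither Lemma~2.3 nor Lemma~2.4 applies. Moreover $j^*(U_\gamma)\neq U_\gamma$, so even if a constant $\bar\Lambda$ could be extracted in the large model it need not lie in $\Xi_{\gamma+1}^{V[H][E]}$ at all. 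The paper flags exactly this obstruction (``the measures $j^*(U_\gamma)$ and $U_\gamma$ are different, so we can not argue as in the proof of Lemma~3.19'').

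The paper's fix is not to index by truncations $\Lambda$ but by \emph{points} $y\in B_\gamma$. For each $y$ one forms $J_y=\{\alpha\in L:\ \text{the one-point extension at }y\text{ of }r_\alpha\upharpoonright\gamma\ \Vdash^*\ \langle j(\alpha),\epsilon\rangle<\langle\mu,\epsilon\rangle\}$ and the family $W_y$ of its possible values over the Cohen forcing; Neeman's argument (\cite{neeman0}) shows that distinct members of $W_y$ are eventually disjoint, which yields sets $A^*_\alpha\in U_\gamma$ and a sequence $\langle\Lambda^*_\alpha\rangle$ with the required property, but only in $V[H][E][H']$. To descend to $V[H][E]$ one now runs a narrow system whose relations are indexed by pairs $(\Lambda,y)$ with $\Lambda$ ranging over a restricted family $\Theta$ (fixed $F^*$ at coordinate $\gamma$), so that the relation $R_{\Lambda,y}$ really only depends on the one-point extension $\Lambda^{\frown}\langle y\rangle$; this is small enough for Lemma~2.4. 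In short, the extra work over Lemma~3.19 is not merely the surgery-plus-\cite{neeman0}-Lemma~3.2 pattern you used for Lemma~4.3, but a genuinely different organization of the system around points $y$ rather than around measure-one sets $H_\Lambda(\gamma)$.
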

\begin{proof}
We follow the proof of Lemma 3.19 and modify it using ideas of the proof of Lemma 16 from \cite{sinapova3} to get the result. We present the proof in some details, as it requires major
modifications with respect to Lemma 3.19.

Let $j: V \rightarrow N$ be a $\kappa_\eta^+$-supercompact embedding with critical point $\kappa_{\gamma+4}.$ As in the previous lemma, extend $j$ to some
\[
j_1: V[H] \rightarrow N[H^*],
\]
where $H^*$ is $j(\MCB)$-generic over $V$. We can arrange so that $H^*=H*E*H'$, where $H'$
is $\MCB'$-generic over $V[H][E],$ for some $\MCB'$ which is
 $\kappa_{\gamma+4}$-distributive in $V[H][E]$ and it is
$\kappa_{\gamma+4}$-closed in $V[H].$ Let $F$ be $\Add(\kappa, j(\kappa_\eta^{++}))$-generic over $V[H][E].$
Then $F$ is also generic over $V[H][E][H']$ and so
\[
E^*=\{f \in \Add(\kappa, j(\kappa_\eta^{++})): f \upharpoonright j''\kappa_\eta^{++} \in j_1''E, f \upharpoonright (j(\kappa_\eta^{++}) \setminus j''\kappa_\eta^{++}) \in F           \}.
\]
 is $\Add(\kappa, j(\kappa_\eta^{++}))$-generic over $V[H^*]=V[H][E][H']$ such that
$j_1''E \subseteq E^*$. It follows that we can extend $j$ to some
\[
j^*: V[H][E] \to N[H^*][E^*]
\]
which is defined in $V[H][E][F][H']=V[H][E][H'][F].$

First we show that there are $\rho< \kappa_\eta^+$ and $\langle \Lambda^*_\alpha: \alpha \in L \setminus \rho       \rangle$
as in the statement of the lemma in $V[H][E][H'].$ Then we will use the preservation Lemma 2.4 to show that we can find these objects in $V[H][E].$

For $\alpha < \beta$ in $L$,  by our assumption of the lemma, $\Lambda_0 \Vdash^*$``$u_\alpha <_{\lusim{T}} u_\beta$'', and so we can find
$r \in \MRB$ such that
\begin{itemize}
\item $stem(r)= \langle g, f, , d \rangle,$
\item $r \upharpoonright \gamma = \Lambda_0,$
\item $r\Vdash$``$u_\alpha <_{\lusim{T}} u_\beta$''.
\end{itemize}
As $K_\gamma$ is closed under $\kappa_\eta^+$-sequences, so we can assume that $[F^r(\gamma)]_{U_\gamma}$
is the same for all $r$'s as above. We also assume that for all such $r$'s, $\dom(F^r)(\gamma)=H^r(\gamma)=B_\gamma.$

Now let $\mu \in j^*(L)$ be such that $\sup j''\kappa_\eta^+ < \mu < j(\kappa_\eta^+).$
Note that $j^*(\Lambda_0)= \Lambda_0$. By elementarity, for each $\alpha \in L$, we can find $r_\alpha = (p_\alpha, d) \in j^*(\MRB)$
with $stem(r_\alpha)= \langle g, f, d \rangle$ and $r_\alpha \upharpoonright \gamma_0 = \Lambda_0$ such that
\[
r_\alpha \Vdash~\langle   j(\alpha), \epsilon \rangle <_{j^*(\lusim{T})} \langle \mu, \epsilon  \rangle.
\]
For each $\alpha \in L$, denote $r_\alpha = \langle  \langle g, f, H_\alpha, F_\alpha     \rangle      ,d  \rangle$. We may assume that for each $\alpha, [F_\alpha(\gamma)]_{j^*(U_\gamma)}= [F^*]_{j^*(U_\gamma)}$, for some fixed $F^*$.

Note that in this case, the measures $j^*(U_\gamma)$ and  $U_\gamma$  are different, so we can not argue as in the proof of Lemma 3.19. We use ideas of Neeman \cite{neeman0} to overcome this difficulty.

For each $y \in B_\gamma$ set
\[
J_y=\{ \alpha \in L:  \langle \langle  g ^{\frown} \langle \gamma, y \rangle , f ^{\frown} \langle \gamma, F_{\alpha}(\gamma)(y) \rangle, H_{\alpha} \upharpoonright \gamma, F_{\alpha}  \upharpoonright \gamma \rangle,  d                          \rangle \Vdash^* \langle   j(\alpha), \epsilon \rangle <_{j^*(\lusim{T})} \langle \mu, \epsilon  \rangle \}.
\]
Also let
\begin{center}
$W_y=\{ C \subseteq L: C$ is unbounded and $\exists b \in \Add(\kappa, j(\kappa_\eta^{++}))$ such that $ b \Vdash \lusim{J}_y = C        \}$.
\end{center}
Then each $W_y$ and $y \mapsto W_y$ is in $V[H][E][H']$ and as in \cite{neeman0}, we have that
\begin{itemize}
\item If $J_y$ is unbounded in $\kappa_\eta^+,$ then $J_y \in V[H][E][H']$,
\item If $C_1 \neq C_2$ are both in $W_y$, then they are disjoint on a tail end,
\item For all $C \in W_y,$ if $\alpha < \beta$ in $C$, then
\[
 \langle \langle  g ^{\frown} \langle \gamma, y \rangle , f ^{\frown} \langle \gamma, F_{\alpha}(\gamma)(y) \rangle, H_{\alpha} \upharpoonright \gamma, F_{\alpha}  \upharpoonright \gamma \rangle,  d                          \rangle \Vdash^* \langle   j(\alpha), \epsilon \rangle <_{j^*(\lusim{T})} \langle \mu, \epsilon  \rangle.
\]
\end{itemize}
Let $\rho < \kappa_\eta^+$ be such that for all $C_1, C_2 \in W_y, C_1$ and $C_2$ are disjoint above $\rho.$
For $\alpha \in L \setminus\rho$ and $y\in B_\gamma$ define $h(y, \alpha)$ to be the unique $C \in W_y$
such that $\alpha \in C$, if such a $C$ exists and undefined otherwise. Note that if $J_y$
is unbounded in $\kappa_\eta^+$ and $\alpha \in J_y,$ then $h(y, \alpha)=J_y$. Let $\alpha_0=\min(L \setminus \rho)$
and for $\alpha \in L \setminus\rho$ let
\[
A^*_\alpha = \{ y \in B_\gamma: h(y, \alpha) =h(y, \alpha_0)   \}.
\]
By the arguments in \cite{neeman0}, each $A^*_\alpha \in U_\gamma.$ For each $\alpha \in L \setminus \rho$ set $\Lambda^*_\alpha \in \Xi_{\gamma+1}$ be such that
\begin{itemize}
\item $stem(\Lambda^*_\alpha)=stem(\Lambda_0)$,
\item  $\Lambda^*_\alpha \upharpoonright \gamma_0=\Lambda_0,$
\item $H_{\Lambda^*_\alpha}(\gamma)= A^*_\alpha$,
\item $F_{\Lambda^*_\alpha}(\gamma)= F^* \upharpoonright A^*_\alpha.$
\end{itemize}
Then $\rho < \kappa_\eta^+$ and $\langle \Lambda^*_\alpha: \alpha \in L \setminus \rho       \rangle$ are as in the statement of the lemma, but they are in $V[H][E][H']$
and not necessarily in $V[H][E]$. We now use the preservation Lemma 2.4 to show that there are such objects in $V[H][E]$.

Let $\Theta= \{ \Lambda \in \Xi_{\gamma+1}: \Lambda \upharpoonright \gamma_0 =\Lambda_0$,  $stem(\Lambda)=stem(\Lambda_0)$ and $F_\Lambda(\gamma)=F^* \upharpoonright H_\Lambda(\gamma)   \}$.
For $\Lambda \in \Theta$  and $y \in H_\Lambda(\gamma),$ we define $\Pi=\Lambda^{\frown} \langle y \rangle \in  \Xi_{\gamma+1}$ be such that
$g_\Pi= g_\Lambda^{\frown} \langle \gamma, y   \rangle$, $f_\Pi= f^{\frown} \langle \gamma, F_\Lambda(\gamma)(y)      \rangle$,
$d_\Pi=d_\Lambda$ and we set $F_\Pi, H_\Pi$ be the same as $F_\Lambda, H_\Lambda$ with $\gamma$ removed from their domain.

For $\Lambda \in \Theta$ and $y \in H_\Lambda(\gamma)$ set
\[
b_{\Lambda, y} = \{ \alpha \in L \setminus \rho:    y \in   A^*_\alpha            \}.
\]
Note that $b_{\Lambda, y} \in V[H][E][H']$ and for all $\alpha < \beta$ in $b_{\Lambda, y}$ we have $\Lambda^{\frown} \langle y \rangle \Vdash^* \langle \alpha, \epsilon \rangle <_{\lusim{T}} \langle \beta, \epsilon \rangle$.
\begin{claim}
For every $\Lambda \in \Theta$ and $A\subseteq H_\Lambda(\gamma), A \in U_\gamma,$  there is $y \in A$ such that  $b_{\Lambda, y}$
is unbounded and there is an unbounded set $b \subseteq \kappa_\eta^+$ in $V[H][E]$ such that $b_{\Lambda, y} \subseteq b$ and for all $\alpha < \beta$ in $b$,
$\Lambda^{\frown} \langle y \rangle \Vdash^* \langle \alpha, \epsilon \rangle <_{\lusim{T}} \langle \beta, \epsilon \rangle$.
\end{claim}
\begin{proof}
Consider the narrow system $\mathcal{S}= \langle L \setminus \rho, \mathcal{R}  \rangle$,
where
\[
\mathcal{R} = \{ R_{\Lambda, y}: \Lambda \in \Theta, y \in H_\Lambda(\gamma)             \}
\]
and for nodes $a, b$ we have
\[
a R_{\Lambda, y} b \iff \Lambda^{\frown}\langle y \rangle \Vdash^* a <_{\lusim{T}} b.
\]
Also consider the branches $\{b^*_{\Lambda, y}: \Lambda \in \Theta, y \in H_\Lambda(\gamma)     \}$, where
$b^*_{\Lambda, y} = \{ \langle \alpha, \epsilon \rangle:    \alpha \in b_{\Lambda, y}            \}$
if $y \in A$, and $b^*_{\Lambda, y}=\emptyset$ otherwise. By preservation Lemma 2.4 (see also Remark 12 in \cite{sinapova3}), in $V[H][E]$, we can find an unbounded $b \subseteq \Lambda$
and $y \in H_{\Lambda}(\gamma)$ such that $b_{\Lambda, y} \subseteq b$ and $b_{\Lambda, y}$ is unbounded. Since $b_{\Lambda, y}$
is unbounded, we have $y \in A$. Then $b$ is as required.
\end{proof}
For $\Lambda \in \Theta$ and $y \in H_\Lambda(\gamma)$ let $\dagger_{\Lambda, y}$ be the assertion: $b_{\Lambda, y}$
is unbounded and there is an unbounded $b \subseteq \kappa_\eta^+$ in $V[H][E]$ with $b_{\Lambda, y} \subseteq b$
such that for all $\alpha < \beta$ in $b$, $\Lambda^{\frown} \langle y \rangle \Vdash^* \langle \alpha, \epsilon \rangle <_{\lusim{T}} \langle \beta, \epsilon \rangle$.
It follows from the above claim that, for each $\Lambda \in \Theta,$
the set
\[
A_\Lambda= \{ y \in H_\Lambda(\gamma): \dagger_{\Lambda, y} \text{~holds~}    \}
\]
is in $V[H][E]$ and $A_\Lambda \in U_\gamma$ ($A_\Lambda$ is in $V[H][E]$ since $\MCB'$ is enough distributive. If $A_\Lambda$ is not in $U_\gamma,$
then its complement, i.e., $Y=\{ y  \in H_\Lambda(\gamma):  \dagger_{\Lambda, y}$ fails $  \}$, is in $U_\gamma.$ Apply the above claim to $Y$ to get a contradiction).

For each $\Lambda \in \Theta$ and $y \in A_\Lambda$ let $L_{\Lambda, y} \subseteq L$ witness
 $\dagger_{\Lambda, y}$. Note that by the distributivity of $\MCB'$, we have that $\langle \Lambda, y  \rangle \mapsto L_{\Lambda, y}$
is in $V[H][E].$ For $\Lambda \in \Theta$ and $\alpha \in L \setminus \rho$  define
\[
A_{\Lambda, \alpha} = \{ y \in H_\Lambda(\gamma) : \alpha \in   L_{\Lambda, y}  \}.
\]
Then as in Claim 20 in \cite{sinapova3}, each $A_{\Lambda,\alpha} \in U_\gamma$.
The rest of the argument is as in Lemma 3.19
and we are done.
\end{proof}
The next lemma is analogue to Lemma 3.20 and whose proof is essentially the same.
\begin{lemma}
There are $\rho< \kappa_\eta^+$ and conditions $\langle p_\alpha: \alpha \in J \setminus \rho     \rangle$
with $p_\alpha \upharpoonright \gamma_0+1 = \langle \langle \bar{g}, \bar{f}, \bar{H}, \bar{F}  \rangle, \bar{d} \rangle$
such that for all $\alpha < \beta$ in $J \setminus \rho, p_\alpha \wedge p_\beta \Vdash$``$u_\alpha <_{\lusim{T}} u_\beta$''.
Here $ p_\alpha \wedge p_\beta $ denotes the weakest extension of $p_\alpha$ and $p_\beta.$
\end{lemma}
The rest of the argument is as before. This completes our sketch of the proof of Theorem 1.2.

\section{Product of Levy collapses}
In this section we present another proof of Theorem 1.1, using  ideas of Neeman \cite{neeman}. The idea of using Neeman's method to prove the theorem was suggested by Yair Hayut \cite{hayut}.  The proof is much easier than the proof given in the previous section.
However let us mention that the method given in this section can not be used to Theorem 1.2.
 Let $\langle  \kappa_\xi: \xi \leq \eta      \rangle$
be an increasing and continuous sequence of cardinals so that:
\begin{enumerate}
\item Each $\kappa_\xi,$ where $\xi=0$ or $\xi$ is a successor ordinal is a supercompact cardinal,
\item $\kappa_\eta=\lambda.$
\item If $\xi=0$ or $\xi$ is a successor ordinal, then  $\kappa_\xi$ is Laver indestructible under $\kappa_\xi$-directed closed forcing notions,
\item For all $\xi \leq \eta, 2^{\kappa_\xi}=\kappa_{\xi}^+.$
\end{enumerate}
As before we can assume that
$\eta<\kappa_0$.

Let $\mathbb{C}$ be the Easton support iteration of $\Col((\kappa_i)_*^{++}, < \kappa_{i+1}), i<\eta,$ and let $H$ be $\mathbb{C}$-generic over $V$. The following lemma can be proved easily:
\begin{lemma}
$(a)$ $\mathbb{C}$ is $\kappa$-directed closed, in particular $\kappa$ remains supercompact in $V[H].$

$(b)$ In $V[H]$, the only measurable cardinals in the interval $(\kappa, \kappa_\eta)$ are $(\kappa_i)_*, i<\eta$ and $\kappa_\eta$ is the limit of them; in particular,
$\kappa_\eta$ is the limit of the first $\eta$ measurable cardinals above $\kappa.$

$(c)$ $NSP(\kappa_\eta^+)$ holds in $V[H],$ where $NSP(\kappa_\eta^+)$ is the assertion  ``the Narrow System Property holds at $\kappa_\eta^+$''.
\end{lemma}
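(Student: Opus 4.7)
For part (a), I would check $\kappa$-directed closure directly: each iterand $\Col((\kappa_i)_*^{++}, <\kappa_{i+1})$ is $(\kappa_i)_*^{++}$-directed closed, and since $(\kappa_0)_*^{++}>\kappa$, each is $\kappa$-directed closed. An Easton-support iteration of length $\eta<\kappa$ whose iterands are $\kappa$-directed closed is itself $\kappa$-directed closed by the standard coordinate-wise pasting of lower bounds. Laver indestructibility of $\kappa=\kappa_0$ then preserves its supercompactness in $V[H]$.

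For part (b), I would factor $\mathbb{C}$ around each $i<\eta$. The head $\mathbb{C}_{<i}$ has size $\le\kappa_i<(\kappa_i)_*$ and the tail $\mathbb{C}_{\ge i}$ is $(\kappa_i)_*^{++}$-directed closed. By L\'evy--Solovay applied to the head, the measurability of $(\kappa_i)_*$ is preserved in $V[H_{<i}]$, and no new measurable appears in $(\kappa_i,(\kappa_i)_*)$ since none existed there in $V$; by closure, the tail adds no subsets of $(\kappa_i)_*$, so the measure on $(\kappa_i)_*$ survives to $V[H]$ and no new measurables are manufactured below $(\kappa_i)_*^{++}$ at that stage either. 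The stage-$i$ iterand collapses everything strictly between $(\kappa_i)_*^{++}$ and $\kappa_{i+1}$, leaving the $V[H]$-cardinals in $((\kappa_i)_*,\kappa_{i+1}]$ as just $(\kappa_i)_*^+,(\kappa_i)_*^{++}$, and $\kappa_{i+1}=(\kappa_i)_*^{+3}$, all successors. At limit $\xi$, $\kappa_\xi=\sup_{\zeta<\xi}\kappa_\zeta$ is singular by continuity of the sequence. Hence the measurables of $V[H]$ in $(\kappa,\kappa_\eta)$ are exactly the $(\kappa_i)_*$, $i<\eta$, with supremum $\kappa_\eta$ since $\kappa_i<(\kappa_i)_*<\kappa_{i+1}$.

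For part (c), I would exploit the $\kappa_\eta^+$-supercompactness of $\kappa$ in $V[H]$ from~(a). Given a narrow system $\mathcal{S}=\langle I,\mathcal{R}\rangle$ at $\kappa_\eta^+$ of width $\rho$ (with $|\mathcal{R}|,\rho\le\kappa_\eta$ by narrowness), take $j\colon V[H]\to M$ with $\crit(j)=\kappa$ and $M^{\kappa_\eta^+}\subseteq M$, derived from a normal fine $\kappa$-complete measure $U$ on $P_\kappa(\kappa_\eta^+)$. In $M$, $j(\mathcal{S})$ is a narrow system at $j(\kappa_\eta^+)>\sup j''\kappa_\eta^+$; pick $\mu\in j(I)$ above $\sup j''\kappa_\eta^+$ and a node $u$ at level $\mu$. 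For each $\alpha\in I$, connectedness in $j(\mathcal{S})$ between $j(\alpha)$ and $\mu$ yields a triple $(R,\xi,\zeta)$ with $\langle j(\alpha),\xi\rangle\,j(R)\,\langle\mu,\zeta\rangle$; pigeonholing against the cofinal order-type of $I$ using regularity of $j(\kappa_\eta^+)$ in $M$ produces a cofinal $J^M\subseteq I$ on which a single triple $(R_*,\xi_*,\zeta_*)$ works, and tree-likeness of $R_*$ below $u$ makes $\{\langle\alpha,\xi_*\rangle:\alpha\in J^M\}$ an $R_*$-chain meeting cofinally many levels.

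The main obstacle is pulling this branch back to $V[H]$. I would use the ultrapower translation to write, for each candidate triple $(R,\xi,\zeta)$, a $V[H]$-internal definition
\[
J_{R,\xi,\zeta}=\bigl\{\alpha\in I : \{x\in P_\kappa(\kappa_\eta^+) : \text{the local analogue of the connectedness witness holds at $x$}\}\in U\bigr\},
\]
with $\mu$ represented by a function $f_\mu$ chosen in $V[H]$; by the $M$-side argument above, some triple $(R_*,\xi_*,\zeta_*)$ makes $J_{R_*,\xi_*,\zeta_*}$ cofinal in $I$, and $\{\langle\alpha,\xi_*\rangle:\alpha\in J_{R_*,\xi_*,\zeta_*}\}$ is the desired cofinal $R_*$-branch in $V[H]$. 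This is the standard ``supercompact NSP'' argument as in Neeman's approach and the suggestion of Hayut.
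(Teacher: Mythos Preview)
Your arguments for (a) and (b) are correct and supply exactly the details the paper omits (the paper simply declares these ``clear''). The factorization and L\'evy--Solovay reasoning in (b) is the intended content.

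For (c), however, there is a genuine gap. You use only the supercompactness of $\kappa=\kappa_0$, taking $\crit(j)=\kappa$. But a narrow system at $\kappa_\eta^+$ can have width $\rho$ and $|\mathcal{R}|$ anywhere below $\kappa_\eta^+$, in particular with $\rho\ge\kappa$. In that case $j$ moves $\rho$, and your pigeonholing step fails: the triple $(R,\xi,\zeta)$ ranges over a set of size $|j(\mathcal{R})|\cdot j(\rho)^2\ge j(\kappa)>\kappa_\eta^+$, so you cannot extract a cofinal $J\subseteq I$ on which the triple is constant. The ultrapower translation at the end does not rescue this, since the index set for the $J_{R,\xi,\zeta}$'s is still too large.

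The correct argument (which is what the paper defers to \cite{LambieHanson2015}) uses the supercompactness of \emph{all} the $\kappa_\xi$ for $\xi$ successor or $0$, not just $\kappa_0$: given a narrow system of width $\rho$, one picks $\xi$ with $\kappa_\xi>\max(\rho,|\mathcal{R}|)$ and takes a $\kappa_\eta^+$-supercompact embedding with critical point $\kappa_\xi$, so that $j$ fixes $\rho$ and $\mathcal{R}$ pointwise and the pigeonholing goes through. The paper itself flags this in the proof of the next lemma: ``the supercompactness of the other cardinals is used in the proof of $NSP$''. To complete your argument you would also need to verify that each such $\kappa_\xi$ remains $\kappa_\eta^+$-supercompact in $V[H]$; this uses the factorization $\mathbb{C}=\mathbb{C}_{<\xi}*\dot{\mathbb{C}}_{\ge\xi}$ together with the $\kappa_\xi$-c.c.\ of the first factor and the $\kappa_\xi$-directed closure of the second (and Laver indestructibility).
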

\begin{proof}
$(a)$ and $(b)$ are clear; for $(c),$ see \cite{LambieHanson2015}.
\end{proof}
The following lemma completes the proof.
\begin{lemma}
Work in $V[H]$. There is $\mu < \kappa =\kappa_0$ such that in the generic extension of $V[H]$
by $\Col(\omega, \mu) \times \Col(\mu^{+}, \kappa)$ the tree property at $\kappa_{\eta}^+$ holds.
\end{lemma}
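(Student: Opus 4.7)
The plan is to argue by contradiction. Suppose $\mu < \kappa$ has been chosen (with the exact value to be specified below), let $G = G_0 \times G_1$ be $\Col(\omega,\mu) \times \Col(\mu^+,\kappa)$-generic over $V[H]$, and suppose towards contradiction that $T \in V[H][G]$ is a $\kappa_\eta^+$-Aronszajn tree. As usual we may assume the nodes on level $\alpha$ of $T$ lie in $\{\alpha\} \times \rho$ for some fixed $\rho < \kappa_\eta^+$ (indeed, $\rho$ can be taken below the collapsed image of $\kappa$). Note that after this forcing $\kappa$ becomes $\aleph_2$, while $\kappa_\eta$ remains a singular cardinal with $\kappa_\eta^+$ preserved, and the list of measurable cardinals of Lemma~5.1(b) in the interval $(\kappa,\kappa_\eta)$ is preserved.

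First, I would use a $\kappa_\eta^+$-supercompactness embedding $j \colon V[H] \to M$ with $\crit(j) = \kappa$, coming from the preserved supercompactness of $\kappa$ in $V[H]$ (Lemma~5.1(a)). Since the first factor $\Col(\omega,\mu)$ has size $<\kappa$ it is fixed by $j$, and $j(\Col(\mu^+,\kappa)) = \Col(\mu^+, j(\kappa))^M$. Choose a generic filter $G^* \supseteq G_1$ for $\Col(\mu^+, j(\kappa))^M$ over $V[H][G]$; the quotient $Q = \Col(\mu^+,[\kappa, j(\kappa)))^{M[G]}$ is $\mu^+$-closed in $V[H][G]$. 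This lets us lift to $j^* \colon V[H][G] \to M[G_0 \times G^*]$ inside $V[H][G][G^*]$.

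Next, set $T^* = j^*(T)$; it has height $j(\kappa_\eta^+) > \sup j''\kappa_\eta^+$. Fix a node $u \in T^*$ at some level $\nu \in [\sup j''\kappa_\eta^+, j(\kappa_\eta^+))$; its downward closure picks out, for every $\alpha < \kappa_\eta^+$, a unique predecessor $v_\alpha \in T^*$ at level $j(\alpha)$. The strategy is to encode, in $V[H][G]$, a narrow system $\mathcal{S} = \langle \kappa_\eta^+, \mathcal{R}\rangle$ whose relations $R_q$ are parametrized by sufficiently coarse equivalence classes of conditions $q \in Q$ together with names for the predecessors of $u$: letting $\langle \alpha,x\rangle R_q \langle\beta,y\rangle$ mean ``$q$ forces that $\langle j(\alpha),x\rangle$ and $\langle j(\beta),y\rangle$ lie on the branch through $u$ in $T^*$, and $\langle \alpha,x\rangle <_T \langle\beta,y\rangle$''. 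The levels of $\mathcal{S}$ have width $\rho < \kappa_\eta^+$; since the relevant data (conditions modulo equivalence, possible predecessor names of bounded rank) can be counted within $\kappa_\eta$ by a chain-condition count of $Q$ together with the bound on level widths of $T$, we also get $|\mathcal{R}|^+ < \kappa_\eta^+$, so $\mathcal{S}$ is narrow. The chain of $v_\alpha$'s gives a system of branches for $\mathcal{S}$ in $V[H][G][G^*]$, obtained by a generic for the $\mu^+$-closed forcing $Q$ over $V[H][G]$.

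Finally, I would apply Preservation Lemma~2.3 with the $\eta^+$-closed side being $Q$; the c.c.\ side is the entire forcing $\mathbb{C} * (\Col(\omega,\mu) \times \Col(\mu^+,\kappa))$ viewed from $V$, whose chain condition is controlled below $\kappa_\eta^+$ since all components have size at most $\kappa$. This gives a cofinal branch through $\mathcal{S}$ in $V[H][G]$, which translates via the definition of $R_q$ into a cofinal branch through $T$, contradicting the Aronszajn-ness of $T$. The choice of $\mu$ must be made so that in $V[H]$ the forcing $\Col(\omega,\mu) \times \Col(\mu^+,\kappa)$ preserves $\NSP(\kappa_\eta^+)$ (Lemma~5.1(c)) in the precise form needed by Lemma~2.3; any $\mu$ with $\mu,\mu^+ < \kappa$ large enough that the two factors are absorbed into a suitable product with the right closure and c.c.\ bounds works, and the main obstacle is organizing the definition of $\mathcal{S}$ so that narrowness is visible and the system of branches from $G^*$ really does cover the system, which is the technical heart of Neeman's original argument.
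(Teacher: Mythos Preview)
There is a genuine gap in your approach. You fix a single $\mu<\kappa$, lift $j$ to $j^*$ over $\Col(\omega,\mu)\times\Col(\mu^+,\kappa)$, and plan to invoke Preservation Lemma~2.3 with the closed side equal to the quotient $Q=\Col(\mu^+,[\kappa,j(\kappa)))^M$. But $Q$ is only $\mu^+$-closed, while the levels of any $\kappa_\eta^+$-Aronszajn tree in $V[H][G]$ can have width up to $\kappa_\eta$; Lemma~2.3 requires the closed factor to be closed beyond the level width, and $\mu^+<\kappa<\kappa_\eta$, so the hypothesis simply fails. Your remark that narrowness should follow from a ``chain-condition count of $Q$'' does not help: $Q$ has size $j(\kappa)>\kappa_\eta^+$, and no coarse equivalence on its conditions brings the index set below $\kappa_\eta$. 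Finally, nothing in your sketch singles out a value of $\mu$; you defer the choice to the end, but no $\mu<\kappa$ makes the closure and width line up.

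The paper's argument is organized quite differently and this is the point you are missing. One assumes the conclusion fails for \emph{every} $\mu<\kappa$, obtaining a sequence $\langle\lusim{T}_\mu:\mu<\kappa\rangle$ of names for Aronszajn trees. Applying $j$ (not a lift $j^*$) yields in $M$ the sequence $\langle\lusim{T}^*_\mu:\mu<j(\kappa)\rangle$, and now one takes $\mu=\kappa_\eta<j(\kappa)$. For this choice the collapse $\Col(\kappa_\eta^+,j(\kappa))^M$ is $\kappa_\eta^+$-closed, so one can build in $V[H]$ a decreasing $\kappa_\eta^+$-sequence of conditions deciding predecessors of a fixed high node of $\lusim{T}^*_{\kappa_\eta}$. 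Reflecting the existential statement ``there exist $\rho<\kappa$, a condition $r$, and $\zeta,\zeta'<\kappa_\sigma$ with $r\Vdash\langle\alpha,\zeta\rangle\leq_{\lusim{T}_\rho}\langle\beta,\zeta'\rangle$'' back to $V[H]$ via elementarity of $j$ gives a narrow system in $V[H]$ itself, with $\kappa$ relations and levels of width $\kappa_\sigma$. One then applies $\NSP(\kappa_\eta^+)$ directly in $V[H]$ --- no preservation lemma is needed --- and the resulting branch contradicts the assumption that $\lusim{T}_\rho$ was Aronszajn for that particular $\rho$. The quantification over all $\mu$ is what makes the reflection produce a system in the ground model; fixing $\mu$ in advance, as you do, loses exactly this.
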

\begin{proof}
The proof of the lemma, which in conjunction with $NSP$ gives the tree property,  follows arguments from Neeman's paper \cite{neeman}, adapting them for the uncountable cofinality case. We may also mention that the supercompactness of the other cardinals is used in the proof of $NSP$, see \cite{LambieHanson2015}.

Assume that for all $\mu < \kappa$ the forcing $\Col(\omega,\mu)\times\Col(\mu^{+},\kappa)$ adds an Aronszajn tree at $\kappa_\eta^+$ and let $\lusim{T}_\mu$ be a name for this tree.

Let $j\colon V[H] \to M$ be a $\kappa_\eta^+$-supercompact elementary embedding, i.e., $\crit(j) = \kappa$ and $M$ is closed under $\kappa_\eta^+$-sequences.
Let $\vec{T}=\langle \lusim{T}_\mu: \mu<\kappa \rangle$ and set
$j(\vec{T})= \langle \lusim{T}^*_\mu: \mu<j(\kappa) \rangle.$

In $M$, let us pick $\mu = \kappa_\eta < j(\kappa)$, so $\lusim{T}^*_{\kappa_\eta}$ is a $\Col(\omega, \kappa_\eta)\times \Col(\kappa_\eta^{+},j(\kappa))$-name for an Aronszajn tree at $j(\kappa_\eta^+)$. Let $\delta = \sup j''(\kappa_\eta^+) < j(\kappa_\eta^+)$. By recursion on $\alpha < \kappa_\eta^+$, and using the
$\kappa_\eta^{+}$-closure of $\Col^M(\kappa_\eta^{+},j(\kappa))$,
 pick a sequence  $\langle (p_\alpha, q_\alpha): \alpha < \kappa_\eta^+ \rangle$ such that:
\begin{enumerate}
\item $(p_\alpha, q_\alpha)\in\Col(\omega,\kappa_\eta) \times \Col^M(\kappa_\eta^+,j(\kappa))),$

\item $\langle q_\alpha: \alpha < \kappa_\eta^+ \rangle$ is a decreasing sequence in $\Col^M(\kappa_\eta^+,j(\kappa)))$,

\item There is an ordinal $\zeta_\alpha < j(\kappa_\eta)$ such that $(p_\alpha,q_\alpha)\Vdash \langle j(\alpha),\zeta_\alpha\rangle \leq_{\lusim{T}^*_{\kappa_\eta}} \langle \delta, 0\rangle$.

\end{enumerate}
Let $I\subseteq \kappa_\eta^+$ be a cofinal set such that for all $\alpha\in I$, $p_\alpha = p_\star$ and $\zeta_\alpha < j(\kappa_{\sigma})$ for some fixed $p_\star\in\Col(\omega,\kappa_\eta)$ and $\sigma < \eta$.

For every $\alpha, \beta\in I$, $\alpha < \beta$, $M$ satisfies that there is a condition $r = (p_\star, q_\beta)$ and ordinals $\zeta_\alpha, \zeta_\beta < j(\kappa_\sigma)$ such that that $r$ forces $\langle j(\alpha), \zeta_\alpha\rangle \leq_{\lusim{T}^*_{\kappa_\eta}} \langle j(\beta), \zeta_\beta\rangle$. Reflecting this downwards (for every pair $\alpha, \beta$ separately), in $V[H]$ there is $\rho < \kappa$, a condition $r$ and $\zeta, \eta < \kappa_\sigma$ such that $r\Vdash \langle \alpha, \zeta\rangle \leq_{\lusim{T}_\rho} \langle \beta, \eta\rangle$. This defines a narrow system on $I$ of width $\kappa_\sigma$ and $\kappa$ relations. Namely, for a pair of ordinal $\rho < \kappa$ and condition $r\in\Col(\omega,\rho)\times\Col(\rho^+,\kappa)$ we say that $\langle \alpha, \zeta\rangle \leq_{\langle \rho, r\rangle} \langle \beta, \eta\rangle$ iff $r\Vdash \langle \alpha, \zeta\rangle \leq_{\lusim{T}_\rho} \langle \beta, \eta\rangle$.

A branch in this narrow system consists of a single condition that forces a branch through a name of an Aronszajn tree. Since the narrow system property holds in $V[H]$, such a branch exists so we get a contradiction.
\end{proof}
\section{Tree property at successor of arbitrary singular cardinals}
In this section we use the method of Section 5   to prove Theorem 1.3 which gives us tree property at successor of an arbitrary singular cardinal.
 Let $\langle  \kappa_\xi: \xi \leq \eta      \rangle$
be as in Section 5 with $\eta < \kappa_0.$
Let $\mathbb{C}$ be the Easton support iteration of $\Col(\kappa_i^{+}, < \kappa_{i+1}), i<\eta,$ and let $H$ be $\mathbb{C}$-generic over $V$.
Note that $V[H]\models~\kappa_\eta=\kappa_0^{+\eta}$. The next lemma can be proved as in Lemma 4.2.
\begin{lemma}
Work in $V[H]$. There is $\mu < \kappa$ such that in the generic extension of $V[H]$
by $\Col(\omega, \mu) \times \Col(\mu^{+}, \kappa)$ the tree property at $\kappa_{\eta}^+$ holds.
\end{lemma}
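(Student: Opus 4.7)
The plan is to adapt the proof of Lemma~5.2 essentially verbatim; the only points that require checking are (i) that $\NSP(\kappa_\eta^+)$ continues to hold in the new ground model $V[H]$, and (ii) that a $\kappa_\eta^+$-supercompactness embedding $j\colon V[H]\to M$ with $\crit(j)=\kappa$ exists and behaves as in Section~5. For (i), each $\kappa_i$ with $i=0$ or $i$ a successor ordinal remains supercompact in $V[H]$: the iteration $\mathbb{C}$ factors as an initial segment of size $<\kappa_i$ followed by a tail whose first factor $\Col(\kappa_i^+,<\kappa_{i+1})$ is $\kappa_i^+$-closed, hence $\kappa_i$-directed closed, so Laver indestructibility passes the supercompactness through. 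Thus $\kappa_\eta$ is still a singular limit of supercompact cardinals in $V[H]$, and by the result of Lambie-Hanson cited in Section~2, $\NSP(\kappa_\eta^+)$ holds. Item (ii) is then standard since $\kappa$ itself is supercompact in $V[H]$.

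With this in hand, I would argue by contradiction: assume that for every $\mu<\kappa$, the forcing $\Col(\omega,\mu)\times\Col(\mu^+,\kappa)$ adds a $\kappa_\eta^+$-Aronszajn tree, fix names $\lusim{T}_\mu$, and put $\vec T=\langle\lusim{T}_\mu:\mu<\kappa\rangle$. Pick a $\kappa_\eta^+$-supercompactness embedding $j\colon V[H]\to M$ with $\crit(j)=\kappa$, and write $j(\vec T)=\langle\lusim{T}^*_\mu:\mu<j(\kappa)\rangle$. Since $\kappa_\eta<j(\kappa)$, the name $\lusim{T}^*_{\kappa_\eta}$ is forced by the trivial condition of $\Col(\omega,\kappa_\eta)\times\Col^M(\kappa_\eta^+,j(\kappa))$ to be an Aronszajn tree on $j(\kappa_\eta^+)$. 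Set $\delta=\sup j''\kappa_\eta^+ <j(\kappa_\eta^+)$. Using the $\kappa_\eta^+$-closure of $\Col^M(\kappa_\eta^+,j(\kappa))$ inside $M$ together with the closure of $M$ under $\kappa_\eta^+$-sequences, recursively build $\langle(p_\alpha,q_\alpha),\zeta_\alpha:\alpha<\kappa_\eta^+\rangle$ with $\langle q_\alpha\rangle$ decreasing, $p_\alpha\in\Col(\omega,\kappa_\eta)$, $\zeta_\alpha<j(\kappa_\eta)$, and $(p_\alpha,q_\alpha)\Vdash\langle j(\alpha),\zeta_\alpha\rangle\leq_{\lusim{T}^*_{\kappa_\eta}}\langle\delta,0\rangle$. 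Since $|\Col(\omega,\kappa_\eta)|=\kappa_\eta<\kappa_\eta^+$ and $\cf(\kappa_\eta^+)>\eta$, thin to a cofinal $I\subseteq\kappa_\eta^+$ on which $p_\alpha$ equals a fixed $p_\star$ and $\zeta_\alpha<j(\kappa_\sigma)$ for a single $\sigma<\eta$.

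Now for $\alpha<\beta$ in $I$, by transitivity through $\langle\delta,0\rangle$, $M$ sees that $(p_\star,q_\beta)$ forces $\langle j(\alpha),\zeta_\alpha\rangle\leq_{\lusim{T}^*_{\kappa_\eta}}\langle j(\beta),\zeta_\beta\rangle$ with witnesses below $j(\kappa_\sigma)$. Reflecting each pair individually via elementarity of $j$, in $V[H]$ one finds $\rho<\kappa$, a condition $r\in\Col(\omega,\rho)\times\Col(\rho^+,\kappa)$ and $\zeta,\xi<\kappa_\sigma$ with $r\Vdash\langle\alpha,\zeta\rangle\leq_{\lusim{T}_\rho}\langle\beta,\xi\rangle$. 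This data assembles into a narrow system on $I$ of height $\kappa_\eta^+$, with levels of width $\kappa_\sigma$ and relations indexed by the (at most $\kappa$ many) pairs $\langle\rho,r\rangle$, exactly as in the proof of Lemma~5.2. By $\NSP(\kappa_\eta^+)$ in $V[H]$ the system has a cofinal branch, which amounts to a single $\rho<\kappa$ and a single condition $r\in\Col(\omega,\rho)\times\Col(\rho^+,\kappa)$ forcing a cofinal branch through $\lusim{T}_\rho$, contradicting that $\lusim{T}_\rho$ names an Aronszajn tree. The only genuinely new point to verify is (i) above; the main obstacle — and it is a mild one — is confirming that the coarser collapsing pattern $\Col(\kappa_i^+,<\kappa_{i+1})$ still preserves enough supercompactness to apply the Lambie-Hanson NSP result at $\kappa_\eta^+$, after which the Neeman-style reflection goes through unchanged.
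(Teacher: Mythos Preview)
Your overall strategy and the reflection argument match the paper's---it simply says the lemma is proved as in Lemma~5.2---and your treatment of (ii) and everything after is fine. However, your justification of (i) contains a real error.

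You assert that each $\kappa_i$ (for $i$ a nonzero successor) remains supercompact in $V[H]$ because $\mathbb{C}$ factors as an initial segment of size $<\kappa_i$ followed by a $\kappa_i$-directed-closed tail. This is false. The initial segment consisting of stages $<i$ ends with $\Col(\kappa_{i-1}^+,<\kappa_i)$, which has size $\kappa_i$, not $<\kappa_i$; more decisively, this collapse makes $\kappa_i=\kappa_{i-1}^{++}$ in the extension, so in $V[H]$ the ordinal $\kappa_i$ is a successor cardinal and cannot be even inaccessible, let alone supercompact. (Exactly the same phenomenon already occurs in Section~5: there $\Col((\kappa_{i-1})_*^{++},<\kappa_i)$ also turns $\kappa_i$ into a successor cardinal, so Lemma~5.1(c) is not established by ``$\kappa_i$ stays supercompact'' either.)

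The conclusion $\NSP(\kappa_\eta^+)$ does hold in $V[H]$, but for a different reason: the ground-model $\kappa_\eta^+$-supercompactness embeddings $j\colon V\to N$ with critical point $\kappa_i$ can be \emph{lifted} to $j^*\colon V[H]\to N[H^*]$, using the standard analysis of Easton-support iterations of Levy collapses (stage $i-1$ of $j(\mathbb{C})$ factors as $\Col(\kappa_{i-1}^+,<\kappa_i)\times\Col(\kappa_{i-1}^+,[\kappa_i,j(\kappa_i)))$, a master-condition argument handles the tail above $\kappa_i$, and so on). It is these lifted embeddings that drive the narrow-system argument cited from Lambie-Hanson. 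So your bottom line is right, but the route you describe---preservation of supercompactness via small forcing plus Laver indestructibility---does not work here, and the needed argument via lifting is genuinely different from what you wrote.
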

Let $G_1\times G_2$ be $\Col(\omega, \mu) \times \Col(\mu^{+}, \kappa)$-generic over $V[H]$
such that $V[H][G_1 \times G_2] \models$``The tree property holds at $\kappa_\eta^+$''.
But  $\aleph_{\eta+1}^{V[H][G_1 \times G_2]}=\kappa_\eta^+$,
and the result follows.

\subsection*{Acknowledgements}
The author thanks the referees of the paper for many helpful comments and corrections.
He also thanks Yair Hayut  for his helps for the results of section 5; in particular the use of Neeman's idea  was suggested by him.

School of Mathematics, Institute for Research in Fundamental Sciences (IPM), P.O. Box:
19395-5746, Tehran-Iran.

E-mail address: golshani.m@gmail.com



\end{document}